\numberwithin{equation}{section}
\setlist[enumerate]{itemsep = -0.2em}
\titleformat{\subsection}[runin]
  {\normalfont\normalsize\bfseries}{\thesubsection}{0.3em}{#1.}
\definecolor{MyDarkBlue}{rgb}{0,0.08,0.50}  
\definecolor{BrickRed}{rgb}{0.65,0.08,0}
\newtheorem{Lemma}{Lemma}[section]
\newtheorem{lemma}{Lemma}[section]
\newtheorem{Proposition}[Lemma]{Proposition}
\newtheorem{Theorem}[Lemma]{Theorem}
\newtheorem{Remark}[Lemma]{Remark}
\newtheorem{Corollary}[Lemma]{Corollary}
\newtheorem{Definition}[Lemma]{Definition}
\newcommand{\sub}[1]{\boldsymbol{#1}}
\newcommand{\N}{\mathbb{N}}
\newcommand{\I}{\mathbbm{1}}
\newcommand{\pr}{\mathbb{P}}
\newcommand{\E}{\mathbb{E}}
\newcommand{\Ee}{\mathcal{E}}
\newcommand{\eqn}[1]{\begin{equation} #1 \end{equation}}
\newcommand{\eqan}[1]{\begin{align} #1 \end{align}}
\newcommand{\PA}{\mathrm{PA}}
\newcommand{\PU}{\mathrm{PU}}
\newcommand{\din}{d^{\scriptscriptstyle(\mathrm{in})}}
\newcommand{\dout}{d^{\scriptscriptstyle(\mathrm{out})}}
\newcommand{\sss}{\scriptscriptstyle}
\newcommand{\prob}{\mathbb{P}}
\newcommand{\Prob}[1]{\prob\left(#1\right)}
\newcommand{\Probp}[1]{\prob_{\psi_t}\left(#1\right)}
\newcommand{\expec}{\mathbb{E}}
\newcommand{\Exp}[1]{\expec\left[#1\right]}
\newcommand{\Expd}[1]{\expec_{\psi_t}\left[#1\right]}
\newcommand{\Varp}[1]{\textup{Var}_{\psi_t}\left(#1\right)}
\newcommand{\plim}{\ensuremath{\stackrel{\prob}{\longrightarrow}}}
\newcommand{\dlim}{\ensuremath{\stackrel{d}{\longrightarrow}}}
\newcommand{\bigO}[1]{O\left(#1\right)}
\newcommand{\dd}{{\rm d}}
\begin{document}

\title{\bfseries\uppercase{\large Subgraphs in preferential attachment models} }
\author[$ $]{Alessandro Garavaglia}
\author[$ $]{Clara Stegehuis}
\affil[$ $]{\footnotesize Department of Mathematics and
    Computer Science, Eindhoven University of Technology, 5600 MB Eindhoven, The Netherlands}
\affil[$ $]{ {\itshape Email address}: a.garavaglia@tue.nl, c.stegehuis@tue.nl\\

\vspace{0.4cm}
\textsc{keywords}: Preferential attachment, subgraphs, triangles, clustering}

\maketitle

\thispagestyle{plain}

\vspace{-1cm}
\begin{abstract}
We consider subgraph counts in general preferential attachment models with power-law degree exponent $\tau>2$. For all subgraphs $H$, we find the scaling of the expected number of subgraphs as a power of the number of vertices. We prove our results on the expected number of subgraphs by defining an optimization problem that finds the optimal subgraph structure in terms of the indices of the vertices that together span it and by using the representation of the preferential attachment model as a P\'olya urn model.
\end{abstract}

\section{Introduction}
\label{sec:intro}

The degree distribution of many real-world networks can be approximated by a power-law distribution~\cite{faloutsos1999,vazquez2002}, where for most networks the degree exponent $\tau$ was found to be between 2 and 3, so that the degree distribution has infinite variance.
Another important property of networks are subgraph counts, also referred to as motif counts. In many real-world networks, several subgraphs were found to appear more frequently than other subgraphs~\cite{milo2002}. Which type of subgraph appears most frequently varies for different networks, and the most frequently occurring subgraphs are believed to be correlated with the function of the network~\cite{milo2004,milo2002,wuchty2003}. The triangle is the most studied subgraph, allowing to compute the clustering coefficient of the network, which expresses the fraction of connected neighbors of a vertex.

To investigate which subgraphs occur more frequently than expected in a given network, the subgraph count in a given network is usually compared to the subgraph count in a random graph null model~\cite{gao2017,maugis2017,milo2004,onnela2005}. 
Several random graph models could potentially serve as null models. In practice, the null model is frequently obtained by randomly switching edges while preserving the degrees. This model however, is not mathematically tractable for $\tau<3$, so that it requires simulations to estimate the subgraph count in such networks~\cite{marcus2012,wernicke2006}. 

Several other null models for simple, scale-free networks exist, such as the configuration model~\cite{bollobas1980}, the rank-1 inhomogeneous random graph~\cite{boguna2003,chung2002}, the preferential attachment model~\cite{albert1999} or hyperbolic random graphs~\cite{krioukov2010}.
When the degree-exponent satisfies $\tau<3$, the configuration model results in a network with many multiple edges and self-loops~\cite[Chapter 7]{hofstad2009}, so that it is not a null model for simple networks anymore. A possible solution is to merge all multiple edges of the configuration model, and consider the erased configuration model instead~\cite{britton2006}. This model is mathematically tractable, and subgraph counts for this model were derived in~\cite{hofstad2017d}. 

In this paper, we analyze subgraph counts for a different random graph null model, the preferential attachment model. The preferential attachment model was first introduced by Albert and Barab\'asi~\cite{albert1999,albert2002}. In their original work, they described a growing random graph model where a new vertex appears at each time step. The new vertex connects with a fixed number of existing vertices chosen with probability proportional to the degrees.
This original Barab\'asi-Albert model has been generalized over the last years, generating the broad class of random graphs called {\em preferential attachment models} (PAMs). 

The original Barabasi-Albert model is known to produce a power-law degree distribution with $\tau=3$ \cite{albert1999}. Often, a modification is considered, where edges are attached to vertices with probability proportional to the degree {\em plus a constant} $\delta$. The constant $\delta$ allows to obtain different values for the power-law exponent $\tau$. For $\delta=0$, we retrieve the original Albert-Barab\'asi model. 

In the present paper we focus on the case where $m\geq2$ is fixed, and our results hold for any value of $\delta>-m$. Taking $\delta\in(-m,0)$ results in $\tau\in(2,3)$, as observed in many real-world networks.
An important difference between the preferential attachment model and most other random graph null models is that edges can be interpreted as directed. Thus, it allows us to study \emph{directed} subgraphs. This is a major advantage of the PAM over other random graph null models, since most real-world network subgraphs in for example biological networks are directed as well~\cite{milo2004,shen-orr2002}. 

\subsection{Literature on subgraphs in PAMs}
\label{sec:subgraphs:PAM:refs}
We now briefly summarize existing results on specific subgraph counts in preferential attachment models. The triangle is the most studied subgraph, allowing to investigate clustering in the preferential attachment model. 
Bollob\'as and Riordan \cite{bollobas2003} prove that for any integer-valued function $T(t)$ there exists a PAM with $T(t)$ triangles, where $t$ denotes the number of vertices in PAM. They further show that the clustering coefficient in the Albert-Barab\'asi model is of order $(\log t)^2/t$, while the expected number of triangles is of order $(\log t)^3$ and more generally, the expected number of cycles of length $l$ scales as $(\log t)^l$. 

Eggmann and Noble \cite{eggmann2011} consider $\delta>0$, so that $\tau>3$ and investigate the number of subgraphs for $m=1$ (so subtrees), and for $m\geq 2$ they study the number of triangles and the clustering coefficient. They observe that the expected number of triangles is of order $\log t$ while the clustering coefficient is of order $\log t/t$, which is different than the results in~\cite{bollobas2003}. Our result on general subgraphs for any value of $\delta$ in Theorem \ref{th:subgraph:expectation} explains this difference (in particular, we refer to \eqref{eq:opteq}).

In a series of papers \cite{prokhorenkova2013,prokhorenkova2016a, prokhorenkova2016} Prokhorenkova et al.\ proved results on the clustering coefficient and the number of triangles for a broad class of PAMs, assuming general properties on the attachment probabilities. These attachment probabilities are in a form that increases the probability of creating a triangle. They prove that in this setting the number of triangles is of order $t$, while the clustering coefficients behaves differently depending on the exact attachment probabilities.

\subsection{Our contribution}

For every directed subgraph, we obtain the scaling of the expected number of such subgraphs in the PAM, generalizing the above results on triangles, cycles and subtrees. Furthermore, we identify the most likely degrees of vertices participating in such subgraphs, which shows that subgraphs in the PAM are typically formed between vertices with degrees of a specific order of magnitude. The order of magnitude of these degrees can be found using an optimization problem. For general subgraphs, our results provide the scaling of the expected number of such subgraphs in the network size $t$. For the triangle subgraph, we obtain precise asymptotic results on the subgraph count, which allows to study clustering in the PAM.

We use the interpretation of the PAM as a P\'olya urn graph. This interpretation allows to view the edges as being present independently, so that we are able to obtain the probability that a subgraph $H$ is present on a specific set of vertices.

\subsection{Organization of the paper}

We first describe the specific PAM we study in Section~\ref{sec:model}. After that, we present our result on the scaling on the number of subgraphs in the PAM and the exact asymptotics on the number of triangles in Section~\ref{sec:results}. Section~\ref{sec:psubgraph} then provides an important ingredient for the proof of the scaling of the expected number of subgraphs: a lemma that describes the probability that a specific subgraph is present on a subset of vertices. After that, we prove our main results in Section~\ref{sec:proofs} and Section~\ref{sec:numberT:proof}. Finally, Section~\ref{sec:discussion} gives the conclusions and the discussion of our results.

\subsection{Model}\label{sec:model}
As mentioned in Section~\ref{sec:intro}, different versions of PAMs exist. Here we define the specific PAM we consider, which is a modification of \cite[Model 3]{berger2014}:
\begin{Definition}[Sequential PAM]
\label{def:seqPA}
Fix $m\geq 1$, $\delta>-m$. Then  $(\PA_t(m,\delta))_{t\in\N}$ is a sequence of random graphs defined as follows:
\begin{itemize}
	\item for $t=1$, $\PA_1(m,\delta)$ consists of a single vertex with no edges;
	\item for $t=2$, $\PA_2(m,\delta)$ consists of two vertices with $m$ edges between them;
	\item for $t\geq 3$, $\PA_t(m,\delta)$ is constructed recursively as follows: conditioning on the graph at time $t-1$, we add a vertex $t$ to the graph, with $m$ new edges. Edges start from vertex $t$ and, for $j=1,\ldots,m$, they are attached sequentially to vertices $E_{t,1},\ldots,E_{t,m}$ chosen with the following probability:
	\eqn{
	\label{for:defPAM}
		\pr\left(E_{t,j} = i \mid \PA_{t-1,j-1}(m,\delta)\right) = \left\{
		\begin{array}{lcr}
			\displaystyle \frac{D_i(t-1)+\delta}{2m(t-2)+ (t-1)\delta} & & \mbox{if} \quad j=1, \\
			 & & \\
			\displaystyle \frac{D_i(t-1,j-1)+\delta}{2m(t-2)+(j-1)+ (t-1)\delta} & & \mbox{if}\quad   j=2,\ldots, m.
		\end{array}\right.
	}
\end{itemize}
	In \eqref{for:defPAM}, $D_i(t-1)$ denotes the degree of $i$ in $\PA_{t-1}(m,\delta)$, while $D_i(t-1,j-1)$ denotes the degree of vertex $i$ after the first $j-1$ edges of vertex $t$ have been attached. Here we assume that $\PA_{t-1,0} = \PA_{t-1}$.
\end{Definition}
	
To keep notation light, we write $\PA_t$ instead of $\PA_t(m,\delta)$ throughout the rest of the paper. The first term in the denominator of~\eqref{for:defPAM} describes the total degree of the first $t-1$ vertices in $\PA_{t-1,j-1}$  when $t-1$ vertices are present and $j-1$ edges have been attached. The term $(t-1)\delta$ in the denominator comes from the fact that there are $t-1$ vertices to which an edge can attach. Note that we do not allow for self-loops, but we do allow for multiple edges. 

The PAM of Definition~\ref{def:seqPA} generates a random graph where the asymptotic degree sequence is close to a power law~\cite[Lemma 4.7]{hofstad2018+},
where the degree exponent $\tau$ satisfies
\begin{equation}\label{eq:tau}
\tau=3+\delta/m.
\end{equation}

\paragraph{Labeled subgraphs.}
As mentioned before, the PAM in Definition~\ref{def:seqPA} is a multigraph, i.e., any pair of vertices may be connected by $m$ different edges. One could {\em erase} multiple edges in order to obtain a simple graph, similarly to~\cite{britton2006} 
for the configuration model. In the PAM in Definition~\ref{def:seqPA} there are at most $m$ edges between any pair of vertices, so that the effect of erasing multiple edges is small, unlike the in configuration model. We do not erase edges, so that we may count a subgraph on the same set of vertices multiple times. Not erasing edges has the advantage that we do not modify the law of the graph, therefore we can directly use known results on PAM. 
\begin{figure}[tp]
\centering
\includegraphics[width = 0.4\textwidth]{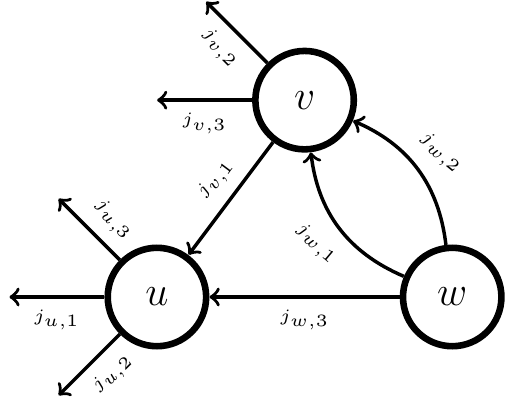}
\caption{Two labeled triangles.}
\label{fig:labelT}
\end{figure}

More precisely, to count the number of subgraphs, we analyze {\em labeled}~subgraphs, i.e., subgraphs where the edges are specified. In Figure~\ref{fig:labelT} we give the example of two labeled triangles on three vertices $u,v,w$, one consisting of edges $\{j_{v,1},j_{w,1},j_{w,3}\}$ and the other one of edges $\{j_{v,1},j_{w,2},j_{w,3}\}$.
As it turns out, the probability of two labeled subgraphs being defined by the same vertices and different edges {\em is independent of the choice of the edges}. For a more precise explanation, we refer to Section~\ref{sec:urngraph}.

\paragraph{Notation.}
We use $\plim $ for convergence in probability. We say that a sequence of events $(\mathcal{E}_n)_{n\geq 1}$ happens with high probability (w.h.p.) if $\lim_{n\to\infty}\Prob{\mathcal{E}_n}=1$. Furthermore, we write $f(n)=o(g(n))$ if $\lim_{n\to\infty}f(n)/g(n)=0$, and $f(n)=O(g(n))$ if $|f(n)|/g(n)$ is uniformly bounded, where $(g(n))_{n\geq 1}$ is nonnegative. 
We say that $X_n=O_{\sss{\prob}}(g(n))$ for a sequence of random variables $(X_n)_{n\geq 1}$ if $|X_n|/g(n)$ is a tight sequence of random variables, and $X_n=o_{\sss{\prob}}(g(n))$ if $X_n/g(n)\plim 0$. We further use the notation $[k]=\{1,2,\ldots,k\}$.

\section{Main results}\label{sec:results}
In this section, we present our results on the number of directed subgraphs in the preferential attachment model. We first define subgraphs in more detail. Let $H=(V_H,E_H)$ be a connected, directed graph. Let $\pi:V_H\mapsto 1,\ldots,|V_H|$ be a one-to-one mapping of the vertices of $H$ to $1,\ldots,|V_H|$. In the PAM, vertices arrive one by one. We let $\pi$ correspond to the order in which the vertices in $H$ have appeared in the PAM, that is $\pi(i)<\pi(j)$ if vertex $i$ was created before vertex $j$. Thus, the pair $(H,\pi)$ is a directed graph, together with a prescription of the order in which the vertices of $H$ have arrived. We call the pair $(H,\pi)$ an ordered subgraph.  

In the PAM, it is only possible for an older vertex to connect to a newer vertex but not the other way around. This puts constraints on the types of subgraphs that can be formed. We call the ordered subgraphs that can be formed in the PAM attainable. The following definition describes all attainable subgraphs:

\begin{Definition}[Attainable subgraphs]\label{def:attainable}
	Let $(H,\pi)$ be an ordered subgraph with adjacency matrix $A_\pi(H)$, where the rows and columns of the adjacency matrix are permuted by $\pi$. We say that $(H,\pi)$ is attainable if $A_\pi(H)$ defines a {\em directed acyclic graph}, where all out-degrees are less or equal than $m$.
\end{Definition}

We now investigate how many of these subgraphs are typically present in the PAM. We introduce the optimization problem
\begin{equation}\label{eq:opteq}
\begin{aligned}[b]
B(H,\pi) & = \max_{s=0,1,\dots,k} -s+\sum_{i=s+1}^k \bigg[\frac{\tau-2}{\tau-1}(\din_H(\pi^{-1}(i))-\dout_H(\pi^{-1}(i)))-\din_H(\pi^{-1}(i))\bigg] \\
& := \max_{s=0,1,\dots,k}-s+\sum_{i=s+1}^k\beta(\pi^{-1}(i)),
\end{aligned}
\end{equation}
where $\dout_H$ and $\din_H$ denote respectively the in- and the out-degree in the subgraph $H$.
Let $N_t(H,\pi)$ denote the number of times the connected graph $H$ with ordering $\pi$ occurs as a subgraph of a PAM of size $t$. 
The following theorem studies the scaling of the expected number of directed subgraphs in the PAM, and relates it to the optimization problem~\eqref{eq:opteq}:

\begin{Theorem}
	\label{th:subgraph:expectation}
	Let $H$ be a directed subgraph on $k$ vertices with ordering $\pi$ such that $(H,\pi)$ is attainable and there are $r$ different optimizers to~\eqref{eq:opteq}. Then, there exist $0<C_1\leq C_2<\infty$ such that
	\begin{equation}\label{eq:NHthm}
	C_1\leq \lim_{t\to\infty}\frac{\Exp{N_t(H,\pi)}}{t^{k+B(H,\pi)}\log^{r-1}(t)}\leq C_2.
	\end{equation}
\end{Theorem}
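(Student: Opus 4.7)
The plan is to expand
\[
\Exp{N_t(H,\pi)} \;=\; \sum_{1\leq i_1<i_2<\cdots<i_k\leq t} p_t(i_1,\dots,i_k;H,\pi),
\]
where $p_t(i_1,\dots,i_k;H,\pi)$ denotes the probability that the ordered subgraph $(H,\pi)$ appears on the vertex indices $i_1,\dots,i_k$. The key input, to be provided in Section~\ref{sec:psubgraph} via the P\'olya urn representation of the PAM, is that conditionally on the urn weights the edges behave as nearly independent Bernoulli variables, and each directed edge from the younger vertex $i_b$ to the older vertex $i_a$ contributes a factor of order $i_a^{-1/(\tau-1)}\,i_b^{-(\tau-2)/(\tau-1)}$. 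Collecting the contributions vertex by vertex yields
\[
p_t(i_1,\dots,i_k;H,\pi) \;\asymp\; \prod_{l=1}^{k} i_l^{\beta(\pi^{-1}(l))},
\]
with exactly the exponent $\beta(\cdot)$ of~\eqref{eq:opteq}, since $-\tfrac{\din_H(v)}{\tau-1}-\tfrac{(\tau-2)\dout_H(v)}{\tau-1}=\beta(v)$ after rearrangement.

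Next I would evaluate the sum by splitting each index into regimes: either $i_l$ stays bounded as $t\to\infty$ (the vertex is \emph{old}) or $i_l$ is of order $t$ (the vertex is \emph{young}). Because of the constraint $i_1<\cdots<i_k$, an admissible regime consists in picking some $s\in\{0,1,\dots,k\}$ and declaring $i_1,\dots,i_s$ old and $i_{s+1},\dots,i_k$ young. Each old sum contributes a constant, while each young sum behaves like $\int_{0}^{1}x^{\beta(\pi^{-1}(l))}\,dx\cdot t^{\beta(\pi^{-1}(l))+1}$, so the contribution of regime $s$ is of order $t^{(k-s)+\sum_{l>s}\beta(\pi^{-1}(l))}$. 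Maximizing this exponent over $s$ recovers exactly $k+B(H,\pi)$, yielding the upper bound once these estimates are made uniform. The matching lower bound follows by restricting to young indices in $[\varepsilon t,t]$ and old indices in $[1,M]$, for fixed $\varepsilon,M>0$, so that each factor is bounded away from zero.

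For the logarithmic correction, note that the increment of the objective in~\eqref{eq:opteq} between $s$ and $s+1$ equals $-1-\beta(\pi^{-1}(s+1))$, so consecutive values of $s$ are both optimizers precisely when $\beta(\pi^{-1}(s+1))=-1$. A more careful analysis using the change of variables $i_l=t^{\alpha_l}$ shows that each such critical vertex produces one $\log t$ factor through $\sum_{i\leq t}i^{-1}\sim\log t$, and that $r$ optimizers of~\eqref{eq:opteq} correspond to $r-1$ such critical vertices, giving the $\log^{r-1}(t)$ factor in~\eqref{eq:NHthm}.

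The main obstacle will be proving the two-sided estimate on $p_t(i_1,\dots,i_k;H,\pi)$ uniformly across all admissible tuples, since the P\'olya urn analysis must remain sharp both in the generic regime and in the boundary regime where $\beta=-1$, and it must correctly accommodate the attainability constraint from Definition~\ref{def:attainable} (out-degrees at most $m$, no self-loops). A secondary issue is the combinatorial accounting of the logarithms: one must verify that boundary layers, in which intermediate indices are of order $t^{\alpha}$ with $\alpha\in(0,1)$, do not inflate the leading order beyond $\log^{r-1}(t)$, and that the multiplicity $r$ of optimizers controls the exponent of the logarithm exactly.
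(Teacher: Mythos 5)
Your overall strategy coincides with the paper's: the two-sided estimate $p_t(i_1,\dots,i_k)\asymp\prod_l i_l^{\beta(\pi^{-1}(l))}$ is exactly Lemma~\ref{lem:finiteedges} (with $\chi=(\tau-2)/(\tau-1)$, so that the receiving vertex contributes $u^{\chi-1}=u^{-1/(\tau-1)}$ and the source $v^{-\chi}$), the sum is then compared to the iterated integral $\int_1^t u_1^{\beta(1)}\cdots\int_{u_{k-1}}^t u_k^{\beta(k)}\,\dd u_k\cdots\dd u_1$, and the old/young dichotomy with the cut point $s$ reproduces the optimization problem~\eqref{eq:opteq}. Your identity $\beta(v)=-\din_H(v)/(\tau-1)-(\tau-2)\dout_H(v)/(\tau-1)$ is correct.

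There are, however, two concrete gaps. First, your lower bound only restricts the old indices to $[1,M]$ and the young ones to $[\varepsilon t,t]$; this produces a bound of order $t^{k+B(H,\pi)}$ with no logarithm, which is insufficient whenever $r>1$ (the limit in~\eqref{eq:NHthm} would then be $0$ after dividing by $\log^{r-1}(t)$). The paper's proof extracts the $\log^{r-1}(t)$ from below by a finer layered restriction: for consecutive optimizers $s_j<s_{j+1}$, the indices $i_{s_j+1},\dots,i_{s_{j+1}}$ are confined to a window $[t^{(j-1)/r},\varepsilon t^{j/r}]$ with each subsequent index within a factor $1/\varepsilon$ of the previous one, so that the leading index of each block sweeps a full scale range and contributes $\tfrac1r\log t+\log(\varepsilon^2)$ while the inner integrals stay bounded. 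Second, your accounting of the logarithms via ``critical vertices with $\beta=-1$'' is only accurate when the optimizers are consecutive integers. In general $s_{j+1}>s_j+1$ is possible; then no single vertex has $\beta=-1$, but the block identity $s_{j+1}-s_j+\sum_{i=s_j+1}^{s_{j+1}}\beta(i)=0$ (the paper's~\eqref{eq:s1s2}) combined with strict subcriticality of all intermediate cut points (\eqref{eq:sbetween}) makes the inner integrals of the block converge and collapses the whole block to a single $\int_1^t u^{-1}\,\dd u=\log t$. The correct statement is that each \emph{pair of consecutive optimizers in the ordered list} $s_1<\cdots<s_r$ contributes one logarithm, which is what yields exactly $\log^{r-1}(t)$; with that correction and the layered lower bound, your outline matches the paper's proof.
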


Theorem~\ref{th:subgraph:expectation} gives the asymptotic scaling of the number of subgraphs where the order in which the vertices appeared in the PAM is known. The total number of copies of $H$ for any ordering, $N_t(H)$, can then easily be obtained from Theorem~\ref{th:subgraph:expectation}:
\begin{Corollary}\label{cor:NH}
	Let $H$ be a directed subgraph on $k$ vertices with $\Pi\neq \varnothing$ the set of orderings $\pi$ such that $(H,\pi)$ is attainable. 
	Let 
	\begin{equation}\label{eq:opttot}
		B(H)=\max_{\pi\in\Pi}B(H,\pi),
	\end{equation}
	and let $r^*$ be the largest number of different optimizers to~\eqref{eq:opteq} among all $\pi\in\Pi$ that maximize~\eqref{eq:opttot}. Then, there exist $0<C_1\leq C_2<\infty$ such that
	\begin{equation}\label{eq:NHcol}
	C_1\leq \lim_{t\to\infty}\frac{\Exp{N_t(H)}}{t^{k+B(H)}\log^{r^*-1}(t)}\leq C_2.
	\end{equation}
\end{Corollary}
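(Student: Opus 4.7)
The plan is to deduce this corollary almost immediately from Theorem~\ref{th:subgraph:expectation} by exploiting the fact that the set $\Pi$ of attainable orderings is finite, with $|\Pi|\le k!$, so the asymptotics of $\Exp{N_t(H)}$ are dictated by its dominant summands.

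The first step is to write $N_t(H)=\sum_{\pi\in\Pi}N_t(H,\pi)$. This identity holds because every copy of $H$ in $\PA_t$ comes with a canonical ordering given by the birth times of the vertices it uses, and the constraints of Definition~\ref{def:attainable} (acyclicity and out-degrees at most $m$) are automatically satisfied by the PAM construction, so only orderings in $\Pi$ contribute. Taking expectations and applying Theorem~\ref{th:subgraph:expectation} to each term produces, for each $\pi\in\Pi$ and all sufficiently large $t$,
$$C_1(\pi)\,t^{k+B(H,\pi)}\log^{r(\pi)-1}(t)\;\le\;\Exp{N_t(H,\pi)}\;\le\;C_2(\pi)\,t^{k+B(H,\pi)}\log^{r(\pi)-1}(t),$$
where $r(\pi)$ denotes the number of optimizers of~\eqref{eq:opteq} for the given pair $(H,\pi)$.

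For the upper bound I sum these estimates over $\pi\in\Pi$. Terms with $B(H,\pi)<B(H)$ contribute $o(t^{k+B(H)})$ and are absorbed into the constant; terms with $B(H,\pi)=B(H)$ contribute at most $C_2(\pi)\,t^{k+B(H)}\log^{r(\pi)-1}(t)\le C_2(\pi)\,t^{k+B(H)}\log^{r^*-1}(t)$ by the very definition of $r^*$. Since $|\Pi|$ is finite, all these constants can be merged into a single $C_2<\infty$. For the lower bound I pick any $\pi^*\in\Pi$ realizing simultaneously $B(H,\pi^*)=B(H)$ and $r(\pi^*)=r^*$; such a $\pi^*$ exists by the definition of $r^*$. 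The termwise inequality $N_t(H)\ge N_t(H,\pi^*)$ combined with the lower bound of Theorem~\ref{th:subgraph:expectation} for $\pi^*$ immediately yields $\Exp{N_t(H)}\ge C_1(\pi^*)\,t^{k+B(H)}\log^{r^*-1}(t)$.

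There is no substantive obstacle here: the corollary is a short bookkeeping consequence of the theorem, and the only item requiring care is that $r^*$ is defined as the \emph{largest} number of optimizers achieved among the $\pi$ that are themselves maximizers of $B(H,\cdot)$, so that the logarithmic factor in the upper bound indeed cannot exceed $\log^{r^*-1}(t)$ and the lower-bound $\pi^*$ is guaranteed to exist.
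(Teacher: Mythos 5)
Your proposal is correct and matches the paper's (implicit) argument: the paper offers no written proof of Corollary~\ref{cor:NH}, simply noting it "can easily be obtained" from Theorem~\ref{th:subgraph:expectation}, and the intended route is exactly your decomposition $N_t(H)=\sum_{\pi\in\Pi}N_t(H,\pi)$ over the finitely many attainable orderings, keeping the dominant terms with $B(H,\pi)=B(H)$ and $r(\pi)=r^*$. Your handling of the lower bound via a single maximizing $\pi^*$ and of the upper bound by merging finitely many constants is precisely the bookkeeping the authors had in mind.
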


Note that from Corollary~\ref{cor:NH} it is also possible to obtain the undirected number of subgraphs in a PAM, by summing the number of all possible directed subgraphs that create some undirected subgraph when the directions of the edges are removed.

\begin{figure}[tb]
	\centering
	\definecolor{mycolor1}{RGB}{230,37,52}%
	\tikzstyle{every node}=[circle,fill=black!25,minimum size=9pt,inner sep=0pt]
	\tikzstyle{S1}=[fill=mycolor1!60]
	\tikzstyle{S2}=[fill=mycolor1!60!black]
	\tikzstyle{S3}=[fill=mycolor1]
	\tikzstyle{n1}=[fill=mycolor1!20]
	\begin{subfigure}[t]{0.2\linewidth}
		\centering
		\begin{tikzpicture}
		\tikzstyle{edge} = [draw,thick,->]
		\node[S1] (a) at (0,0) {};
		\node[S2] (b) at (0.75,0.9) {};
		\node[S3] (c) at (1.5,0) {};
		\draw[edge] (a)--(b);
		\draw[edge] (c)--(b);
		\draw[edge] (a)--(c);
		\end{tikzpicture}	
		\caption{$t^{\frac{3-\tau}{\tau-1}}\log(t)$}
		\label{fig:K3}
	\end{subfigure}
	\begin{subfigure}[t]{0.2\linewidth}
		\centering
		\begin{tikzpicture}
		\tikzstyle{edge} = [draw,thick,->]
		\node[S1] (a) at (0,0) {};
		\node[S2] (b) at (0.75,0.9) {};
		\node[S1] (c) at (1.5,0) {};
		\draw[edge] (a)--(b);
		\draw[edge] (c)--(b);
		\end{tikzpicture}	
		\caption{$t^{2/(\tau-1)}$}
		\label{fig:wedge1}
	\end{subfigure}
	\begin{subfigure}[t]{0.2\linewidth}
		\centering
		\begin{tikzpicture}
		\tikzstyle{edge} = [draw,thick,->]
		\node[S1] (a) at (0,0) {};
		\node[S1] (b) at (0.75,0.9) {};
		\node[S1] (c) at (1.5,0) {};
		\draw[edge] (a)--(b);
		\draw[edge] (b)--(c);
		\end{tikzpicture}	
		\caption{$t$}
		\label{fig:wedge2}
	\end{subfigure}
	\begin{subfigure}[t]{0.2\linewidth}
		\centering
		\begin{tikzpicture}
		\tikzstyle{edge} = [draw,thick,->]
		\node[S1] (a) at (0,0) {};
		\node[S1] (b) at (0.75,0.9) {};
		\node[S1] (c) at (1.5,0) {};
		\draw[edge] (b)--(a);
		\draw[edge] (b)--(c);
		\end{tikzpicture}	
		\caption{$t$}
		\label{fig:wedge3}
	\end{subfigure}
	\caption{Order of magnitude of $N_t(H)$ for all attainable connected directed graphs on 3 vertices and for $2<\tau<3$. Vertices with degree proportional to a constant are light pink, vertices with free degrees are bright red, and vertices of degree proportional to $t^{1/(\tau-1)}$ are dark red.}
	\label{fig:motif3}
\end{figure}
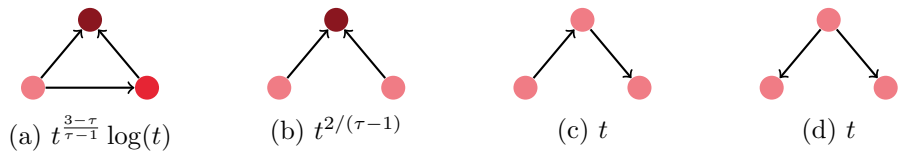

\paragraph{Interpretation of the optimization problem.}
The optimization problem~in ~\ref{eq:opteq} has an intuitive explanation. Assume that $\pi$ is the identity mapping, so that vertex 1  is the oldest vertex of $H$, vertex 2 the second oldest and so on. We show in Section~\ref{sec:proof:lem:finiteedges} that the probability that an attainable subgraph is present on vertices with indices $u_1<u_2<\cdots<u_k$ scales as
\begin{equation}
\prod_{i\in[k]}u_i^{\beta(i)},
\end{equation}
with $\beta(i)$ as in~\eqref{eq:opteq}.
Thus, if for all $i$, $u_i\propto t^{\alpha_i}$ for some $\alpha_i$, then the probability that the subgraph is present scales as $t^{\sum_{i\in[k]}\alpha_i\beta(i)}$. The number of vertices with index proportional to $t^{\alpha_i}$ scales as $t^{\alpha_i}$. Therefore, heuristically, the number of times subgraph $H$ occurs on vertices with indices proportional to $(t^{\alpha_i})_{i\in[k]}$ such that $\alpha_1\leq \alpha_2 \leq \cdots\leq \alpha_k$ scales as
\begin{equation}
 t^{\sum_{i\in[k]}(\beta(i)+1)\alpha_i}.
\end{equation}
Because the exponent is linear in $\alpha_i$, the exponent is maximized for $\alpha_i\in\{0,1\}$ for all $i$. Because of the extra constraint $\alpha_1\leq\alpha_2\leq\cdots\leq \alpha_k$ which arises from the ordering of the vertices in the PAM, the maximal value of the exponent is $k+B(H)$. This suggests that the number of subgraphs scales as $t^{k+B(H)}$. 

Thus, the optimization problem $B(H)$ finds the most likely configuration of a subgraph in terms of the indices of the vertices  involved. If the optimum is unique, the number of subgraphs is maximized by subgraphs occurring on one set of very specific vertex indices. For example, when the maximum contribution is $\alpha_i=0$, this means that vertices with constant index, the oldest vertices of the PAM are most likely to be a member of subgraph $H$ at position $i$. When $\alpha_i=1$ is the optimal contribution, vertices with index proportional to $t$, the newest vertices, are most likely to be a member of subgraph $H$ at position $i$. When the optimum is not unique, several maximizers contribute equally to the number of subgraphs, which introduces the extra logarithmic factors in~\eqref{eq:NHthm}.

\begin{figure}[tb]
	\definecolor{mycolor1}{RGB}{230,37,52}%
	\tikzstyle{every node}=[circle,fill=black!25,minimum size=8pt,inner sep=0pt]
	\tikzstyle{S1}=[fill=mycolor1!60]
	\tikzstyle{S2}=[fill=mycolor1!60!black]
	\tikzstyle{S3}=[fill=mycolor1]
	\tikzstyle{n1}=[fill=mycolor1!20]
	\tikzstyle{edge} = [draw,thick,->]
	\begin{subfigure}[t]{0.16\linewidth}
		\centering
		\begin{tikzpicture}
		\node[S1] (a) at (0,0) {};
		\node[S1] (b) at (1,0) {};
		\node[S1] (c) at (0,1) {};
		\node (d) at (1,1) {};
		\draw[edge] (a)--(b);
		\draw[edge] (c)--(b);
		\draw[edge] (b)--(d);
		\draw[edge] (a)--(c);
		\draw[edge] (a)--(d);
		\draw[edge] (c)--(d);
		\end{tikzpicture}	
		\caption{depends on $\tau$}
		\label{fig:K4}
	\end{subfigure}
	\begin{subfigure}[t]{0.16\linewidth}
		\centering
		\begin{tikzpicture}
		\node (a) at (0,0) {};
		\node (b) at (1,0) {};
		\node (c) at (0,1) {};
		\node[S2] (d) at (1,1) {};
		\draw[edge] (a)--(b);
		\draw[edge] (a)--(d);
		\draw[edge] (b)--(d);
		\draw[edge] (a)--(c);
		\draw[edge] (c)--(d);
		\end{tikzpicture}	
		\caption{depends on $\tau$}
		\label{fig:squareextra1}
	\end{subfigure}
	\begin{subfigure}[t]{0.16\linewidth}
		\centering
		\begin{tikzpicture}
		\node (a) at (0,0) {};
		\node[S2] (b) at (1,0) {};
		\node (c) at (0,1) {};
		\node[S2] (d) at (1,1) {};
		\draw[edge] (a)--(b);
		\draw[edge] (a)--(d);
		\draw[edge] (d)--(b);
		\draw[edge] (a)--(c);
		\draw[edge] (c)--(d);
		\end{tikzpicture}	
		\caption{depends on $\tau$}
		\label{fig:squareextra2}
	\end{subfigure}
	\begin{subfigure}[t]{0.16\linewidth}
		\centering
		\begin{tikzpicture}
		\node (a) at (0,0) {};
		\node[S2] (b) at (1,0) {};
		\node[S2] (c) at (0,1) {};
		\node[S2] (d) at (1,1) {};
		\draw[edge] (a)--(b);
		\draw[edge] (a)--(d);
		\draw[edge] (d)--(b);
		\draw[edge] (a)--(c);
		\draw[edge] (d)--(c);
		\end{tikzpicture}	
		\caption{depends on $\tau$}
		\label{fig:squareextra3}
	\end{subfigure}
	\begin{subfigure}[t]{0.16\linewidth}
		\centering
		\begin{tikzpicture}
		\node[S2] (a) at (0,0) {};
		\node[S2] (b) at (1,0) {};
		\node[S1] (c) at (0,1) {};
		\node[S2] (d) at (1,1) {};
		\draw[edge] (a)--(b);
		\draw[edge] (a)--(d);
		\draw[edge] (b)--(d);
		\draw[edge] (c)--(a);
		\draw[edge] (c)--(d);
		\end{tikzpicture}	
		\caption{$t^{\frac{3-\tau}{\tau-1}}$}
		\label{fig:squareextra4}
	\end{subfigure}
	\begin{subfigure}[t]{0.16\linewidth}
		\centering
		\begin{tikzpicture}
		\node[S2] (a) at (0,0) {};
		\node[S2] (b) at (1,0) {};
		\node[S1] (c) at (0,1) {};
		\node[S2] (d) at (1,1) {};
		\draw[edge] (a)--(b);
		\draw[edge] (a)--(d);
		\draw[edge] (d)--(b);
		\draw[edge] (c)--(a);
		\draw[edge] (c)--(d);
		\end{tikzpicture}	
		\caption{$t^{\frac{3-\tau}{\tau-1}}$}
		\label{fig:squareextra5}
	\end{subfigure}
	\begin{subfigure}[t]{0.16\linewidth}
		\centering
		\begin{tikzpicture}
		\node[S2] (a) at (0,0) {};
		\node[S1] (b) at (1,0) {};
		\node[S1] (c) at (0,1) {};
		\node[S2] (d) at (1,1) {};
		\draw[edge] (b)--(a);
		\draw[edge] (a)--(d);
		\draw[edge] (b)--(d);
		\draw[edge] (c)--(a);
		\draw[edge] (c)--(d);
		\end{tikzpicture}	
		\caption{$t^{\frac{6-2\tau}{\tau-1}}$}
		\label{fig:squareextra6}
	\end{subfigure}
	\begin{subfigure}[t]{0.16\linewidth}
		\centering
		\begin{tikzpicture}
		\node[S1] (a) at (0,0) {};
		\node[S2] (b) at (1,0) {};
		\node[S3] (c) at (0,1) {};
		\node[S3] (d) at (1,1) {};
		\draw[edge] (a)--(b);
		\draw[edge] (d)--(b);
		\draw[edge] (a)--(c);
		\draw[edge] (c)--(d);
		\end{tikzpicture}	
		\caption{$t^{\frac{3-\tau}{\tau-1}}\log^2(t)$}
		\label{fig:square1}
	\end{subfigure}
	\begin{subfigure}[t]{0.16\linewidth}
		\centering
		\begin{tikzpicture}
		\node[S1] (a) at (0,0) {};
		\node[S3] (b) at (1,0) {};
		\node[S3] (c) at (0,1) {};
		\node[S2] (d) at (1,1) {};
		\draw[edge] (a)--(b);
		\draw[edge] (b)--(d);
		\draw[edge] (a)--(c);
		\draw[edge] (c)--(d);
		\end{tikzpicture}	
		\caption{$t^{\frac{3-\tau}{\tau-1}}\log^2(t)$}
		\label{fig:square2}
	\end{subfigure}
	\begin{subfigure}[t]{0.16\linewidth}
		\centering
		\begin{tikzpicture}
		\tikzstyle{every node}=[circle,fill=black!25,minimum size=8pt,inner sep=0pt]
		\node[S1] (a) at (0,0) {};
		\node[S2] (b) at (1,0) {};
		\node[S2] (c) at (0,1) {};
		\node[S2] (d) at (1,1) {};
		\draw[edge] (c)--(b);
		\draw[edge] (d)--(b);
		\draw[edge] (a)--(c);
		\draw[edge] (c)--(d);
		\end{tikzpicture}	
		\caption{$t^{\frac{1}{\tau-1}}$}
		\label{fig:paw1}
	\end{subfigure}
	\begin{subfigure}[t]{0.16\linewidth}
		\centering
		\begin{tikzpicture}
		\tikzstyle{every node}=[circle,fill=black!25,minimum size=8pt,inner sep=0pt]
		\node[S1] (a) at (0,0) {};
		\node[S2] (b) at (1,0) {};
		\node[S2] (c) at (0,1) {};
		\node[S1] (d) at (1,1) {};
		\draw[edge] (c)--(b);
		\draw[edge] (d)--(b);
		\draw[edge] (a)--(c);
		\draw[edge] (d)--(c);
		\end{tikzpicture}	
		\caption{$t^{\frac{4-\tau}{\tau-1}}$}
		\label{fig:paw2}
	\end{subfigure}
	\begin{subfigure}[t]{0.16\linewidth}
		\centering
		\begin{tikzpicture}
		\tikzstyle{every node}=[circle,fill=black!25,minimum size=8pt,inner sep=0pt]
		\node[S1] (a) at (0,0) {};
		\node (b) at (1,0) {};
		\node[S2] (c) at (0,1) {};
		\node[S1] (d) at (1,1) {};
		\draw[edge] (b)--(c);
		\draw[edge] (d)--(b);
		\draw[edge] (a)--(c);
		\draw[edge] (d)--(c);
		\end{tikzpicture}	
		\caption{depends on $\tau$}
		\label{fig:paw3}
	\end{subfigure}
	\begin{subfigure}[t]{0.16\linewidth}
		\centering
		\begin{tikzpicture}
		\tikzstyle{every node}=[circle,fill=black!25,minimum size=8pt,inner sep=0pt]
		\node(a) at (0,0) {};
		\node (b) at (1,0) {};
		\node (c) at (0,1) {};
		\node (d) at (1,1) {};
		\draw[edge] (c)--(b);
		\draw[edge] (d)--(b);
		\draw[edge] (c)--(a);
		\draw[edge] (c)--(d);
		\end{tikzpicture}	
		\caption{depends on $\tau$}
		\label{fig:paw4}
	\end{subfigure}
	\begin{subfigure}[t]{0.16\linewidth}
		\centering
		\begin{tikzpicture}
		\tikzstyle{every node}=[circle,fill=black!25,minimum size=8pt,inner sep=0pt]
		\node[S2](a) at (0,0) {};
		\node[S2] (b) at (1,0) {};
		\node[S2] (c) at (0,1) {};
		\node[S1] (d) at (1,1) {};
		\draw[edge] (c)--(b);
		\draw[edge] (d)--(b);
		\draw[edge] (c)--(a);
		\draw[edge] (d)--(c);
		\end{tikzpicture}	
		\caption{$t^{\frac{3-\tau}{\tau-1}}$}
		\label{fig:paw5}
	\end{subfigure}
	\begin{subfigure}[t]{0.16\linewidth}
		\centering
		\begin{tikzpicture}
		\tikzstyle{every node}=[circle,fill=black!25,minimum size=8pt,inner sep=0pt]
		\node[S2](a) at (0,0) {};
		\node[S1] (b) at (1,0) {};
		\node[S2] (c) at (0,1) {};
		\node[S3] (d) at (1,1) {};
		\draw[edge] (b)--(c);
		\draw[edge] (d)--(b);
		\draw[edge] (c)--(a);
		\draw[edge] (d)--(c);
		\end{tikzpicture}	
		\caption{$t^{\frac{1}{\tau-1}}\log^2(t)$}
		\label{fig:paw6}
	\end{subfigure}
	\begin{subfigure}[t]{0.16\linewidth}
		\centering
		\begin{tikzpicture}
		\tikzstyle{every node}=[circle,fill=black!25,minimum size=8pt,inner sep=0pt]
		\node[S1] (a) at (0,0) {};
		\node[S1] (b) at (1,0) {};
		\node[S2] (c) at (0,1) {};
		\node[S1] (d) at (1,1) {};
		\draw[edge] (b)--(c);
		\draw[edge] (a)--(c);
		\draw[edge] (d)--(c);
		\end{tikzpicture}	
		\caption{$t^{\frac{3}{\tau-1}}$}
		\label{fig:wedge41}
	\end{subfigure}
	\begin{subfigure}[t]{0.16\linewidth}
		\centering
		\begin{tikzpicture}
		\tikzstyle{every node}=[circle,fill=black!25,minimum size=8pt,inner sep=0pt]
		\node[S1] (a) at (0,0) {};
		\node[S1] (b) at (1,0) {};
		\node[S2] (c) at (0,1) {};
		\node[S2] (d) at (1,1) {};
		\draw[edge] (b)--(c);
		\draw[edge] (a)--(c);
		\draw[edge] (c)--(d);
		\end{tikzpicture}	
		\caption{$t^{\frac{2}{\tau-1}}$}
		\label{fig:wedge42}
	\end{subfigure}
	\begin{subfigure}[t]{0.16\linewidth}
		\centering
		\begin{tikzpicture}
		\tikzstyle{every node}=[circle,fill=black!25,minimum size=8pt,inner sep=0pt]
		\node[S1] (a) at (0,0) {};
		\node[S1] (b) at (1,0) {};
		\node[S1] (c) at (0,1) {};
		\node[S1] (d) at (1,1) {};
		\draw[edge] (c)--(b);
		\draw[edge] (a)--(c);
		\draw[edge] (c)--(d);
		\end{tikzpicture}	
		\caption{$t$}
		\label{fig:wedge43}
	\end{subfigure}
	\begin{subfigure}[t]{0.16\linewidth}
		\centering
		\begin{tikzpicture}
		\tikzstyle{every node}=[circle,fill=black!25,minimum size=8pt,inner sep=0pt]
		\node[S1] (a) at (0,0) {};
		\node[S1] (b) at (1,0) {};
		\node[S1] (c) at (0,1) {};
		\node[S1] (d) at (1,1) {};
		\draw[edge] (c)--(b);
		\draw[edge] (c)--(a);
		\draw[edge] (c)--(d);
		\end{tikzpicture}	
		\caption{$t$}
		\label{fig:wedge44}
	\end{subfigure}
	\begin{subfigure}[t]{0.16\linewidth}
		\centering
		\begin{tikzpicture}
		\tikzstyle{every node}=[circle,fill=black!25,minimum size=8pt,inner sep=0pt]
		\node[S1] (a) at (0,0) {};
		\node[S1] (b) at (1,0) {};
		\node[S1] (c) at (0,1) {};
		\node[S1] (d) at (1,1) {};
		\draw[edge] (d)--(b);
		\draw[edge] (a)--(c);
		\draw[edge] (c)--(d);
		\end{tikzpicture}	
		\caption{$t$}
		\label{fig:path1}
	\end{subfigure}
	\begin{subfigure}[t]{0.16\linewidth}
		\centering
		\begin{tikzpicture}
		\tikzstyle{every node}=[circle,fill=black!25,minimum size=8pt,inner sep=0pt]
		\node[S1] (a) at (0,0) {};
		\node[S1] (b) at (1,0) {};
		\node[S2] (c) at (0,1) {};
		\node[S1] (d) at (1,1) {};
		\draw[edge] (d)--(b);
		\draw[edge] (a)--(c);
		\draw[edge] (d)--(c);
		\end{tikzpicture}	
		\caption{$t^{\frac{2}{\tau-1}}$}
		\label{fig:path2}
	\end{subfigure}
	\begin{subfigure}[t]{0.16\linewidth}
		\centering
		\begin{tikzpicture}
		\tikzstyle{every node}=[circle,fill=black!25,minimum size=8pt,inner sep=0pt]
		\node[S1] (a) at (0,0) {};
		\node[S1] (b) at (1,0) {};
		\node[S1] (c) at (0,1) {};
		\node[S1] (d) at (1,1) {};
		\draw[edge] (d)--(b);
		\draw[edge] (c)--(a);
		\draw[edge] (c)--(d);
		\end{tikzpicture}	
		\caption{$t$}
		\label{fig:path3}
	\end{subfigure}
	\begin{subfigure}[t]{0.16\linewidth}
		\centering
		\begin{tikzpicture}
		\tikzstyle{every node}=[circle,fill=black!25,minimum size=8pt,inner sep=0pt]
		\node[S1] (a) at (0,0) {};
		\node[S1] (b) at (1,0) {};
		\node[S1] (c) at (0,1) {};
		\node[S2] (d) at (1,1) {};
		\draw[edge] (b)--(d);
		\draw[edge] (a)--(c);
		\draw[edge] (c)--(d);
		\end{tikzpicture}	
		\caption{$t^{\frac{2}{\tau-1}}$}
		\label{fig:path4}
	\end{subfigure}
	\caption{Order of magnitude of $N_t(H)$ for all attainable connected directed graphs on 4 vertices and for $2<\tau<3$. Vertices with degree proportional to a constant are light pink, vertices with free degrees are bright red, and vertices of degree proportional to $t^{1/(\tau-1)}$ are dark red. Vertices where the optimizer depends on $\tau$ are gray.	}
	\label{fig:graphlet4}
\end{figure}
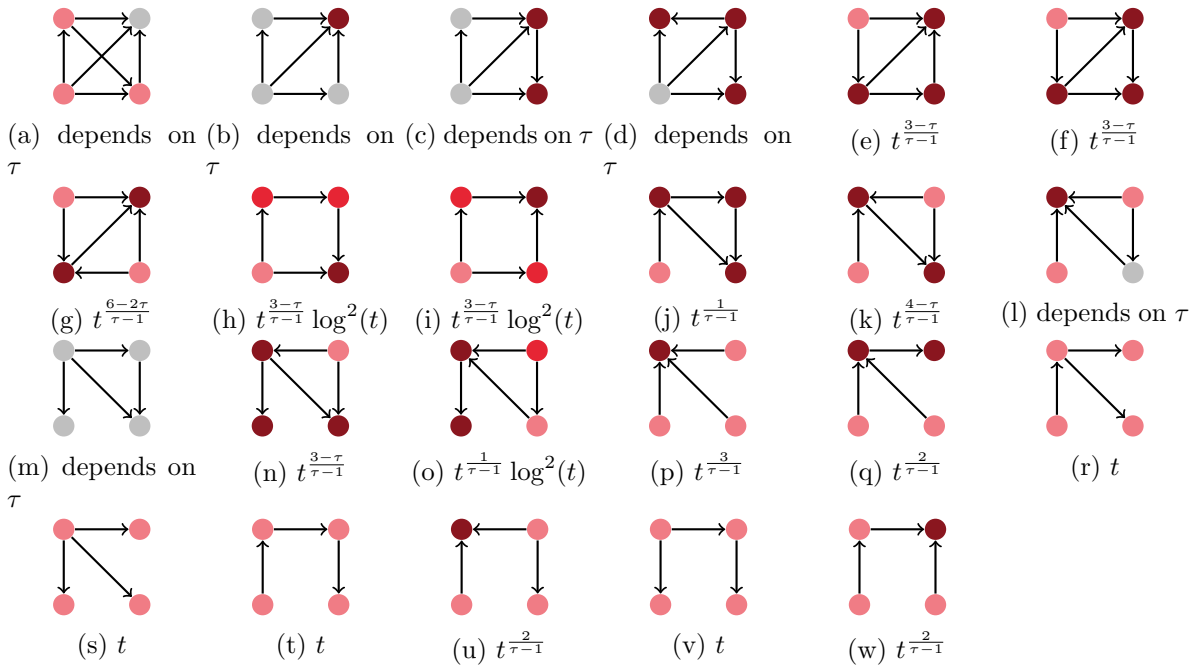

\paragraph{Most likely degrees.}
As mentioned above, the optimization problem~\eqref{eq:opteq} finds the most likely orders of magnitude of the indices of the vertices. When the optimum is unique, the optimum is attained by some vertices of constant index, and some vertices with index proportional to $t$. The vertices of constant index have degrees proportional to $t^{1/(\tau-1)}$ with high probability~\cite{hofstad2009}, whereas the vertices with index proportional to $t$ have degrees proportional to a constant. When the optimum is not unique, the indices of the vertices may have any range, so that the degrees of these vertices in the optimal subgraph structures have degrees ranging between 1 and $t^{1/(\tau-1)}$. Thus, the optimization problem~\eqref{eq:opteq} also finds the optimal subgraph structure in terms of its degrees. 
The most likely degrees of all directed connected subgraphs on 3 and 4 vertices resulting from Corollary~\ref{cor:NH} and the asymptotic number of such subgraphs for $2<\tau<3$  are visualized in Figures~\ref{fig:motif3} and~\ref{fig:graphlet4}.
For some subgraphs, the optimum of~\eqref{eq:opteq} is attained by the same $s$ and therefore the same most likely degrees for all $2<\tau<3$, while for other subgraphs the optimum may change with $\tau$. 

One such example is the complete graph of size 4. For the directed complete graph, there is only one attainable ordering satisfying Definition~\ref{def:attainable}, so we take the vertices of $H$ to be labeled with this ordering. For $\tau<5/2$, the optimizer of~\eqref{eq:opteq} is given by $s=3$ with optimal value $-3-3\tfrac{\tau-2}{\tau-1}$, whereas for $\tau>5/2$ it is given by $s=4$ and optimal value -4. Thus, for $\tau<5/2$ a complete graph of size four typically contains three hub vertices of degree proportional to $t^{1/(\tau-1)}$ and one vertex of constant degree, and the number of such subgraphs scales as $t^{1-(\tau-2)/(\tau-1)}$ whereas for $\tau>5/2$ the optimal structure contains four hub vertices instead and the number of such subgraphs scales as a constant.

\paragraph{Fluctuations of the number of subgraphs.}
In Theorem~\ref{th:subgraph:expectation} we investigate the expected number of subgraphs, which explains the average number of subgraphs over many PAM realizations. Another interesting question is what the distribution of the number of subgraphs in a PAM realization behaves like. In this paper, we mainly focus on the expected value of the number of subgraphs, but here we argue that the limiting distribution of the rescaled number of subgraphs may be quite different for different subgraphs.

In Section~\ref{sec:proof:lem:finiteedges} we show that by viewing the PAM as a P\'olya urn graph, we can associate a sequence of random independent random variables $(\psi_v)_{v\in[t]}$  to the vertices of the PAM , where $\psi_v$ has a Beta distribution with parameters depending on $m$, $\delta$ and $v$.
Once we condition on $\psi_1,\ldots,\psi_t$, the edge statuses of the graph are independent of each other. Furthermore, the degree of a vertex $v$ depends on the index $v$ and $\psi_v$. The higher $\psi_v$ is, the higher $D_v(t)$ is. Thus, we can interpret $\psi_v$ as a {\em hidden weight} associated to the vertex $v$. 

Using this representation of the PAM we can view the PAM as a random graph model with two sources of randomness: the randomness of the $\psi$-variables, and then the randomness of the independent edge statuses determined by the $\psi$-variables. 
Therefore, we can define two levels of concentration for the number of ordered subgraphs $N_t(H,\pi)$. Denote by $\E_{\sub{\psi}_t}[N_t(H,\pi)] := \E[N_t(H,\pi)\mid \psi_1,\ldots,\psi_t]$. Furthermore, let $N_{t,\psi}(H,\pi)$ denote the number of ordered subgraphs conditionally on $\psi$. 
Then, the ordered subgraph $(H,\pi)$ can be in the following three classes of subgraphs:
\begin{itemize}
	\item [$\triangleright$]
\textit{Concentrated}: $N_{t,\psi}(H,\pi)$ is concentrated around its conditional expectation $\E_{\sub{\psi}_t}[N_t(H,\pi)]$, i.e., as $t\rightarrow\infty$, 
	\eqn{\label{for:subgraph:convergence1}
		\frac{N_{t,\psi}(H,\pi)}{\E_{\sub{\psi}_t}[N_t(H,\pi)]}\stackrel{\pr}{\longrightarrow}1,
	}
and as $t\rightarrow\infty$, 
	\eqn{\label{for:subgraph:convergence2det}
		\frac{N_t(H,\pi)}{\E[N_t(H,\pi)]} \plim 1.
	} 
	\item[$\triangleright$]
	\textit{Only conditionally concentrated:} condition~\eqref{for:subgraph:convergence1} holds, and as $t\to\infty$
		\eqn{\label{for:subgraph:convergence2}
		\frac{N_t(H,\pi)}{\E[N_t(H,\pi)]} \dlim X
	} 
for some random variable $X$.
\item[$\triangleright$]
	\textit{Non-concentrated:} condition~\eqref{for:subgraph:convergence1} does not hold.
\end{itemize}
	
For example, it is easy to see that the number of subgraphs as shown in Figure~\ref{fig:wedge3} satisfies $N(H)/t\plim m(m-1)/2$, so that it is a subgraph that belongs to the class of concentrated subgraphs. Below we argue that the triangle belongs to the class of only conditionally concentrated subgraphs.  
We now give a criterion for the conditional convergence of \eqref{for:subgraph:convergence1} in the following proposition:

\begin{Proposition}[Criterion for conditional convergence]
\label{prop:criterion:conditional:subgraph}
Consider a subgraph $(H,\pi)$ such that $\E[N_t(H,\pi)]\rightarrow\infty$ as $t\rightarrow\infty$. Denote by $\hat{\mathcal{H}}$ the set of all possible subgraphs composed by two distinct copies of $(H,\pi)$ with at least one edge in common. Then, as $t\rightarrow\infty$, 
\eqn{ 
\label{for:subg:variance:1}
	\sum_{\hat{H}\in\hat{\mathcal{H}}}\E[N_t(\hat{H})] = o\Big(\E[N_t(H,\pi)]^2\Big) \qquad \Longrightarrow \qquad
			\frac{N_{t,\psi}(H,\pi)}{\E_{\sub{\psi}_t}[N_t(H,\pi)]}\plim1.
}
\end{Proposition}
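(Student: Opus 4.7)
The plan is to use the conditional second moment method in the P\'olya urn graph representation, where conditionally on $\sub{\psi}_t=(\psi_1,\ldots,\psi_t)$ the edge indicators of the graph are independent. Write
$$N_{t,\psi}(H,\pi)=\sum_S \I{S\text{ present}},$$
with $S$ ranging over all attainable labeled placements of $(H,\pi)$ in $[t]$ (a choice of vertices together with the specific edges realizing each arc of $H$). By conditional independence of the edge indicators, $\text{Cov}_{\sub{\psi}_t}(\I{S_1},\I{S_2})=0$ whenever $S_1,S_2$ use disjoint edge sets. Hence the only nonzero contributions to the conditional variance come from the diagonal $S_1=S_2$ (total contribution at most $\Expd{N_t(H,\pi)}$) and from off-diagonal pairs sharing at least one edge, which I group according to the merged subgraph $\hat H\in\hat{\mathcal H}$. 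Bounding each off-diagonal covariance by $\Expd{\I{S_1\cup S_2\text{ present}}}$ gives
$$\Varp{N_{t,\psi}(H,\pi)}\leq \Expd{N_t(H,\pi)}+\sum_{\hat H\in\hat{\mathcal H}}\Expd{N_t(\hat H)}.$$

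Taking unconditional expectations, using the hypothesis together with $\Exp{N_t(H,\pi)}\to\infty$ (so that $\Exp{N_t(H,\pi)}=o(\Exp{N_t(H,\pi)}^2)$), yields
$$\Exp{\Varp{N_{t,\psi}(H,\pi)}}=o\big(\Exp{N_t(H,\pi)}^2\big).$$
The conditional weak law then follows from the conditional Chebyshev inequality
$$\CProb{\,|N_{t,\psi}(H,\pi)/\Expd{N_t(H,\pi)}-1|>\epsilon\,}{\sub{\psi}_t}\leq \frac{\Varp{N_{t,\psi}(H,\pi)}}{\epsilon^2\,\Expd{N_t(H,\pi)}^2}$$
after integrating over $\sub{\psi}_t$.

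The main obstacle is precisely this integration step: the denominator $\mu_t:=\Expd{N_t(H,\pi)}$ is itself random, so one cannot simply apply Markov's inequality to the right-hand side. I would handle this by splitting on the event $A_c=\{\mu_t\geq c\,\Exp{N_t(H,\pi)}\}$ for a small $c>0$: on $A_c$, the right-hand side is bounded deterministically by $\Varp{N_{t,\psi}(H,\pi)}/(c\epsilon\Exp{N_t(H,\pi)})^2$, whose unconditional expectation tends to $0$ by the variance bound above; off $A_c$ one needs $\Prob{A_c^c}\to 0$, which I would establish by a Chebyshev or Paley--Zygmund estimate for the lower tail of $\mu_t$. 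Controlling $\Var{\mu_t}$ in turn calls for a parallel second moment analysis at the $\sub{\psi}_t$ level, where the shared-edge contribution is again absorbed by the hypothesis on $\hat{\mathcal H}$ and the disjoint-edge contribution reduces to covariances between products of distinct independent Beta random variables, which one expects to be of lower order than $\Exp{\mu_t}^2$. Making this lower-tail control for $\mu_t$ clean under the bare hypothesis is the most delicate ingredient, and is where the P\'olya urn structure of the PAM enters most essentially.
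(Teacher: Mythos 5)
Your core argument is exactly the paper's: the conditional variance bound
$\Varp{N_t(H,\pi)}\leq \Expd{N_t(H,\pi)}+\sum_{\hat H\in\hat{\mathcal H}}\Expd{N_t(\hat H)}$
is Lemma~\ref{lem:bounvariance:subgraph}, proved by the identical decomposition (diagonal, edge-disjoint pairs with zero conditional covariance, overlapping pairs grouped by the merged subgraph), and the conclusion is then drawn by Chebyshev after taking unconditional expectations, just as you propose. The one point of divergence is the normalization in the Chebyshev step. The paper measures the deviation against the \emph{deterministic} quantity $\Exp{N_t(H,\pi)}$, bounding $\Prob{|N_{t,\psi}(H,\pi)-\Expd{N_t(H,\pi)}|>\varepsilon\Exp{N_t(H,\pi)}}$ by $\varepsilon^{-2}\Exp{N_t(H,\pi)}^{-2}\,\E\big[\Varp{N_t(H,\pi)}\big]=o(1)$, so no random denominator enters the Markov step; the passage from this to the stated ratio convergence $N_{t,\psi}(H,\pi)/\Expd{N_t(H,\pi)}\plim 1$ --- which, as you correctly observe, requires tightness of $\Exp{N_t(H,\pi)}/\Expd{N_t(H,\pi)}$ --- is left implicit in the paper. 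So your instinct that something extra is needed is sound, and the paper does not supply it either.

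However, the specific route you sketch for that extra step would fail. Showing $\Var{\Expd{N_t(H,\pi)}}=o\big(\Exp{N_t(H,\pi)}^2\big)$ and applying Chebyshev to $\mu_t=\Expd{N_t(H,\pi)}$ cannot work under the bare hypothesis: for the motivating example of the triangle (Corollary~\ref{cor:conc:triang}), the paper argues that $\triangle_t/\Exp{\triangle_t}$ converges to a \emph{non-degenerate} random limit while being conditionally concentrated, which forces $\Expd{\triangle_t}/\Exp{\triangle_t}$ to converge to that same non-degenerate limit; hence $\Var{\Expd{\triangle_t}}=\Theta\big(\Exp{\triangle_t}^2\big)$ and the $\psi$-level Chebyshev estimate gives nothing. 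Paley--Zygmund only yields $\Prob{\mu_t\geq c\,\Exp{N_t(H,\pi)}}\geq \mathrm{const}>0$, not a probability tending to one. What is actually needed is merely the lower-tail tightness statement: for every $\varepsilon>0$ there is $c>0$ with $\Prob{\mu_t<c\,\Exp{N_t(H,\pi)}}<\varepsilon$ for all large $t$. This should instead be extracted from the explicit product representation \eqref{for:finiteedges:0} together with the uniform control of the urn positions in Lemma~\ref{lem:urnpos} (on the high-probability event there, $\mu_t$ is bounded below by a constant multiple of the deterministic sum appearing in \eqref{eq:expnh}), or by showing $\mu_t/\Exp{N_t(H,\pi)}$ converges to an a.s.\ positive limit. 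With that replacement, or by simply adopting the paper's deterministic normalization in the Chebyshev step, your argument closes.
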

Proposition~\ref{prop:criterion:conditional:subgraph} gives a simple criterion for conditional convergence for a subgraph $(H,\pi)$, and it is proved in Section~\ref{sec:fluct}. 
The condition in \eqref{for:subg:variance:1} is simple to evaluate in practice. 
We denote the subgraphs consisting of two overlapping copies of $(H,\pi)$ sharing at least one edge by $\hat{H}_1,\dots,\hat{H}_r$. To identify the order of magnitude of $\E[\hat{H}_i]$, we apply Corollary~\ref{cor:NH} to $\hat{H}_i$ or, in other words, we apply Theorem~\ref{th:subgraph:expectation} to all possible orderings $\hat{\pi}$ of $\hat{H}_i$. Once we have all orders of magnitude of $(\hat{H}_i,\hat{\pi})$ for all orderings $\hat{\pi}$, and for all $\hat{H}_i$, it is immediate to see if hypothesis of Proposition~\ref{prop:criterion:conditional:subgraph} is satisfied. 

There are subgraphs where the condition in Proposition~\ref{prop:criterion:conditional:subgraph} does not hold. For example, merging two copies of the subgraph of Figure~\ref{fig:wedge42} as in Figure~\ref{fig:wedgemerged} violates the condition in Proposition~\ref{prop:criterion:conditional:subgraph}. We show in Section~\ref{sec:fluct} that this subgraph is in the class of non-concentrated subgraphs with probability close to one.

	\begin{figure}[t]
	\centering
		\definecolor{mycolor1}{RGB}{230,37,52}%
	\tikzstyle{every node}=[circle,fill=black!25,minimum size=8pt,inner sep=0pt]
	\tikzstyle{S1}=[fill=mycolor1!60]
	\tikzstyle{S2}=[fill=mycolor1!60!black]
	\tikzstyle{S3}=[fill=mycolor1]
	\tikzstyle{n1}=[fill=mycolor1!20]
	\tikzstyle{edge} = [draw,thick,->]
	\begin{tikzpicture}
	\tikzstyle{every node}=[circle,fill=black!25,minimum size=8pt,inner sep=0pt]
	\node[S1] (a) at (0,0) {};
	\node[S1] (b) at (0,1) {};
	\node[S2] (c) at (1,0) {};
	\node[S2] (d) at (1,1) {};
	\node[S1] (e) at (2,0) {};
	\node[S1] (f) at (2,1) {};
	\draw[edge] (b)--(c);
	\draw[edge] (a)--(c);
	\draw[edge] (c)--(d);
	\draw[edge] (f)--(c);
	\draw[edge] (e)--(c);
	\end{tikzpicture}	
	\caption{The order of magnitude of this subgraph containing two merged copies of the subgraph of Figure~\ref{fig:wedge42} is $t^{\frac{4}{\tau-1}}$, so that the condition in Proposition~\ref{prop:criterion:conditional:subgraph} is not satisfied for the subgraph in Figure~\ref{fig:wedge42}.}
	\label{fig:wedgemerged}
\end{figure}
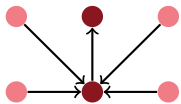

\subsection{Exact constants: triangles}
Theorem \ref{th:subgraph:expectation} allows to identify the order of magnitude of the expected number of subgraphs in PAM. In particular, for a subgraph $H$ with ordering $\pi$, it assures the existence of two constants $0<C_1\leq C_2<\infty$ as in \eqref{eq:NHthm}. 
A more detailed analysis is necessary to prove a stronger result than Theorem \ref{th:subgraph:expectation} of the type
$$
	\lim_{t\rightarrow\infty} \frac{\Exp{N_t(H,\pi)}}{t^{k+B(H,\pi)}\log^{r-1}(t)}= C,
$$
for some constant $0<C<\infty$. In other words, given an ordered subgraph $(H,\pi)$, we want to identify the constant $C>0$ such that
\eqn{
\label{for:expect:subgraph:constant}
	\Exp{N_t(H,\pi)} = Ct^{k+B(H,\pi)}\log^{r-1}(t)(1+o(1)).
}
We prove \eqref{for:expect:subgraph:constant} for triangles to show the difficulties in the evaluation of the precise constant $C$ for general subgraphs.
The following theorem provides the detailed scaling of the expected number of triangles:

\begin{Theorem}[Phase transition for the number of triangles]
\label{th:numberT}
Let $m\geq 2$ and $\delta>-m$ be parameters for $(\PA_t)_{t\geq1}$.  Denote the number of labeled triangles in $\PA_t$ by $\triangle_t$. Then, as $t\rightarrow\infty$, 
\begin{enumerate}
	\item if $\tau>3$, then
	$$
		\E[\triangle_t] = \frac{m^2(m-1)(m+\delta)(m+\delta+1)}{\delta^2(2m+\delta)}\log (t)(1+o(1));
	$$
	\item if $\tau=3$, then 
	$$
		 \E[\triangle_t] = \frac{m(m-1)(m+1)}{48}\log^3 (t)(1+o(1));
	$$
	\item if  $\tau\in(2,3)$, then
	$$
		\E[\triangle_t]=\frac{m^2(m-1)(m+\delta)(m+\delta+1)}{\delta^2(2m+\delta)}t^{(3-\tau)/(\tau-1)}\log (t)(1+o(1)).
	$$
\end{enumerate}
\end{Theorem}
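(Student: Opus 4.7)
The plan is to use the P\'olya urn representation of the PAM (Section~\ref{sec:psubgraph}), which renders the edges conditionally independent given the urn weights $(\psi_v)_{v\in[t]}$. Writing $\triangle_t$ as a sum over ordered triples of indices $u_1<u_2<u_3$ and labeled edge-configurations,
\[
\E[\triangle_t] \;=\; m^2(m-1)\sum_{1\le u_1<u_2<u_3\le t} q(u_1,u_2,u_3),
\]
where the combinatorial factor $m\cdot m(m-1)$ counts the ways to pick one outgoing edge-label from $u_2$ (to form the edge $u_2\to u_1$) and two distinct outgoing edge-labels from $u_3$ (to form the edges $u_3\to u_1$ and $u_3\to u_2$), and $q(u_1,u_2,u_3)$ is the probability, averaged over $(\psi_v)$, of such a fixed labeled triangle.

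Step~1 is to derive an asymptotic formula for $q$. Conditionally on $(\psi_v)$, each of the three triangle-edges is an independent event of the form $\psi_{u_i}\prod_{u_i<v<u_j}(1-\psi_v)$; multiplying the three probabilities, collecting the repeated $(1-\psi_v)$-factors, and taking the $\psi$-expectation (which splits into an independent product over $v$ by independence of the $\psi_v$'s) gives, after evaluating the Beta moments and using Stirling-type asymptotics for the resulting Gamma ratios,
\[
q(u_1,u_2,u_3) \;=\; A(m,\delta)\,u_1^{-2/(\tau-1)}\,u_2^{-1}\,u_3^{-2(\tau-2)/(\tau-1)}\bigl(1+o(1)\bigr),
\]
with $A(m,\delta)$ an explicit rational function of $(m+\delta)$, $(m+\delta+1)$ and $(2m+\delta)$; these exponents match the $\beta(\cdot)$-values from \eqref{eq:opteq} specialised to the triangle. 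Step~2 is to evaluate the triple sum iteratively, first over $u_3$, then over $u_1$, then over $u_2$, splitting according to the sign of $(3-\tau)/(\tau-1)$. For $\tau>3$, the $u_3$-sum converges to a constant multiple of $u_2^{(3-\tau)/(\tau-1)}$, the $u_1$-sum diverges as a constant multiple of $u_2^{(\tau-3)/(\tau-1)}$, and these powers cancel with the middle factor $u_2^{-1}$, leaving $\sum_{u_2}u_2^{-1}\sim\log t$. For $\tau<3$ the roles reverse: the $u_3$-sum produces $t^{(3-\tau)/(\tau-1)}$, the $u_1$-sum converges to a constant, and $\sum_{u_2}u_2^{-1}\sim\log t$. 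For $\tau=3$, all three inner sums become logarithmic and the identity $\sum_{u_1<u_2<u_3\le t}(u_1u_2u_3)^{-1}\sim\log^3(t)/6$ gives the $\log^3 t$ rate.

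The main obstacle is the transition at $\tau=3$. In the non-critical cases, the summation constants contribute a factor $((\tau-1)/(\tau-3))^2=((2m+\delta)/\delta)^2$, producing the $1/\delta^2$ appearing in the theorem. As $\delta\to 0$ this factor diverges and must cancel exactly against the emerging logarithms of the inner sums, so that the finite constant $m(m-1)(m+1)/48$ appears at $\tau=3$. Making this cancellation rigorous requires the asymptotic expansion of $q$ in Step~1 to hold uniformly in $\delta$ near $0$, which follows from Stirling's formula with explicit error bounds for the Gamma ratios in the Beta-moment computation. A secondary technical issue is that the $1+o(1)$ in Step~1 is not uniform when two of the indices are of comparable size; this is handled by splitting the triple sum into a bulk region (indices well separated, where the leading asymptotics applies) and boundary regions that contribute only to lower-order terms.
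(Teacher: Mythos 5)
Your proposal follows essentially the same route as the paper: the same decomposition into $m^2(m-1)$ labeled triangles, the P\'olya-urn representation with conditional independence, factorization of the $\psi$-expectation into Beta moments, Stirling asymptotics reducing the summand to $u^{-2/(\tau-1)}v^{-1}w^{-2(\tau-2)/(\tau-1)}$, and the same case analysis giving $\log t$, $\log^3 t$, or $t^{(3-\tau)/(\tau-1)}\log t$. The only cosmetic differences are the order in which you evaluate the iterated sums and your (unnecessary) concern about uniformity in $\delta$ near $0$ --- the paper treats $\tau=3$ as a separate direct computation rather than as a limit, so no uniform expansion is needed.
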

Theorem~ \ref{th:numberT} in the case $\delta=0$ coincides with~ \cite[Theorem 14]{bollobas2003}. For $\delta>0$ we retrieve the result in~ \cite[Proposition 4.3]{eggmann2011}, noticing that the additive constant $\beta$ in the attachment probabilities in the M\'ori model considered in~ \cite{eggmann2011} coincides with~\eqref{def:seqPA} for $\beta = \delta/m$. 

The proof of Theorem~\ref{th:numberT} in Section~\ref{sec:numberT:proof} shows that to identify the constant in~ \eqref{for:expect:subgraph:constant} we need to evaluate the precise expectations involving the attachment probabilities of edges. The equivalent formulation of PAM given in Section~\ref{sec:urngraph} simplifies the calculations, but it is still necessary to evaluate rather complicated expectations involving  products of several terms as in~\eqref{for:edgeUrnP}. For a more detailed discussion, we refer to Remark~\ref{rem:subgraphs:constant}.

\paragraph{The distribution of the number of triangles.}
Theorem~\ref{th:numberT} shows the behavior of the expected number of triangles. The distribution of the number of triangles across various PAM realizations is another object of interest. We prove the following result for the number of triangles $\triangle_t$:

\begin{Corollary}[Conditional concentration of triangles]
\label{cor:conc:triang}
For $\tau\in(2,3)$, the number of triangles $\triangle_t$ is conditionally concentrated in the sense of \eqref{for:subgraph:convergence1}.
\end{Corollary}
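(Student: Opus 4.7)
The strategy is to invoke Proposition~\ref{prop:criterion:conditional:subgraph} with $H$ the (directed) triangle and $\pi$ its unique attainable ordering. By Theorem~\ref{th:numberT}(3), for $\tau\in(2,3)$ we have $\E[\triangle_t]\to\infty$ and $\E[\triangle_t]^2=\Theta\bigl(t^{(6-2\tau)/(\tau-1)}\log^2 t\bigr)$, so it suffices to show
\[
\sum_{\hat H\in\hat{\mathcal H}}\E[N_t(\hat H)]=o\bigl(t^{(6-2\tau)/(\tau-1)}\log^2 t\bigr).
\]

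The set $\hat{\mathcal H}$ is finite: its elements are obtained by gluing two labeled triangles along at least one shared labeled edge, and up to ordering they fall into three topological classes. If the triangles share a single edge and one vertex of each lies outside it, we obtain the $4$-vertex \emph{diamond}. If they share all three vertices and two labeled edges, we obtain a $3$-vertex multigraph with one doubled pair (possible for $m\ge 2$). If they share all three vertices and one labeled edge, we get a $3$-vertex multigraph with two doubled pairs (possible for $m\ge 4$). Each class contains only finitely many attainable orderings, and I apply Corollary~\ref{cor:NH} to each $\hat H$ in turn.

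For the diamond, the optimal ordering is the one in which the two endpoints of the shared edge take indices $1$ and $2$ and the two remaining vertices (each with out-degree $2$) take indices $3$ and $4$. A direct comparison of $-s+\sum_{i>s}\beta(\pi^{-1}(i))$ at $s=0,1,2,3,4$ shows the optimum is attained \emph{uniquely} at $s=2$, with value $(10-6\tau)/(\tau-1)$; hence $k+B=(6-2\tau)/(\tau-1)$, $r^*=1$, and $\E[N_t(\hat H)]=O\bigl(t^{(6-2\tau)/(\tau-1)}\bigr)$ with no logarithmic factor. The analogous computations for the two three-vertex multigraphs yield $k+B=(3-\tau)/(\tau-1)$ and $k+B=(7-3\tau)/(\tau-1)$, respectively, both strictly smaller than $(6-2\tau)/(\tau-1)$ for $\tau\in(2,3)$. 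Summing over the finitely many $\hat H\in\hat{\mathcal H}$ gives the required $o$-bound, and Proposition~\ref{prop:criterion:conditional:subgraph} concludes the proof.

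The delicate point is the diamond, whose polynomial order equals that of $\E[\triangle_t]^2$. What makes the bound work is the $\log^2 t$ factor in $\E[\triangle_t]^2$, which arises from the two maximizers $s=1,2$ of~\eqref{eq:opteq} for the triangle; for the diamond the corresponding maximizer is unique, so no logarithmic correction appears. Verifying this uniqueness (and checking that no other attainable ordering of the diamond produces a larger exponent or an additional maximizer) is the main calculation to perform, and it reduces to finite, explicit comparisons.
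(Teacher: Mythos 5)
Your argument is essentially the paper's proof: the authors likewise invoke Proposition~\ref{prop:criterion:conditional:subgraph} and verify its hypothesis by tabulating, via Corollary~\ref{cor:NH}, the orders of magnitude of all merged-triangle subgraphs (their Figure~\ref{fig:graphlet4tr}), with the diamond whose shared edge joins the two oldest vertices being the critical case at order $t^{(6-2\tau)/(\tau-1)}$, beaten only because of the $\log^2 t$ in $\E[\triangle_t]^2$. One small inaccuracy: for the three-vertex multigraph with two doubled pairs, your exponent $(7-3\tau)/(\tau-1)$ corresponds to one particular placement of the doubled pairs, whereas the maximum over attainable configurations is $(5-2\tau)/(\tau-1)$ (already attainable for $m\geq 3$); this is still strictly below $(6-2\tau)/(\tau-1)$ for $\tau\in(2,3)$, so the conclusion is unaffected.
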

Corollary~\ref{cor:conc:triang} is a direct consequence of Proposition~\ref{prop:criterion:conditional:subgraph}, and the atlas of the order of magnitudes of all possible realizations of the subgraphs consisting of two triangles sharing one or two edges, presented in Figure~\ref{fig:graphlet4tr}.
Figure~\ref{fig:triadfluct} shows a density approximation of the number of triangles obtained by simulations. These figures suggest that the rescaled number of triangles converges to a random limit, since the width of the density plots does not decrease in $t$. Thus, while the number of triangles concentrates conditionally, it does not seem to converge to a constant when taking the random $\psi$-variables into account. This would put the triangle subgraph in the class of only conditionally concentrated subgraphs. Proving this and identifying the limiting random variable of the number of triangles is an interesting open question.

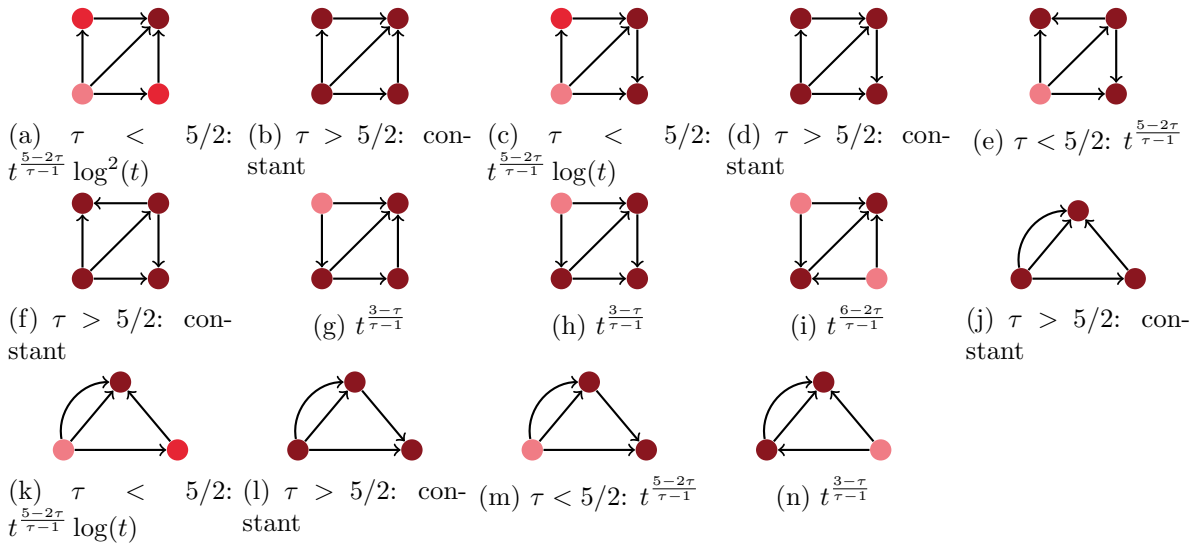
\begin{figure}[tp!]
	\definecolor{mycolor1}{RGB}{230,37,52}%
	\tikzstyle{every node}=[circle,fill=black!25,minimum size=8pt,inner sep=0pt]
	\tikzstyle{S1}=[fill=mycolor1!60]
	\tikzstyle{S2}=[fill=mycolor1!60!black]
	\tikzstyle{S3}=[fill=mycolor1]
	\tikzstyle{n1}=[fill=mycolor1!20]
	\tikzstyle{edge} = [draw,thick,->]
	\begin{subfigure}[t]{0.19\linewidth}
		\centering
		\begin{tikzpicture}
		\node[S1] (a) at (0,0) {};
		\node[S3] (b) at (1,0) {};
		\node[S3] (c) at (0,1) {};
		\node[S2] (d) at (1,1) {};
		\draw[edge] (a)--(b);
		\draw[edge] (a)--(d);
		\draw[edge] (b)--(d);
		\draw[edge] (a)--(c);
		\draw[edge] (c)--(d);
		\end{tikzpicture}	
		\caption{$\tau<5/2$: $t^{\frac{5-2\tau}{\tau-1}}\log^2(t)$}
		\label{fig:squareextra1trsmall}
	\end{subfigure}
	\begin{subfigure}[t]{0.19\linewidth}
	\centering
	\begin{tikzpicture}
	\node[S2] (a) at (0,0) {};
	\node[S2] (b) at (1,0) {};
	\node[S2] (c) at (0,1) {};
	\node[S2] (d) at (1,1) {};
	\draw[edge] (a)--(b);
	\draw[edge] (a)--(d);
	\draw[edge] (b)--(d);
	\draw[edge] (a)--(c);
	\draw[edge] (c)--(d);
	\end{tikzpicture}	
	\caption{$\tau>5/2$: constant}
	\label{fig:squareextra1trlarge}
\end{subfigure}
	\begin{subfigure}[t]{0.19\linewidth}
		\centering
		\begin{tikzpicture}
		\node[S1] (a) at (0,0) {};
		\node[S2] (b) at (1,0) {};
		\node[S3] (c) at (0,1) {};
		\node[S2] (d) at (1,1) {};
		\draw[edge] (a)--(b);
		\draw[edge] (a)--(d);
		\draw[edge] (d)--(b);
		\draw[edge] (a)--(c);
		\draw[edge] (c)--(d);
		\end{tikzpicture}	
		\caption{$\tau<5/2$: $t^{\frac{5-2\tau}{\tau-1}}\log(t)$}
		\label{fig:squareextra2trsmall}
	\end{subfigure}
	\begin{subfigure}[t]{0.19\linewidth}
	\centering
	\begin{tikzpicture}
	\node[S2] (a) at (0,0) {};
	\node[S2] (b) at (1,0) {};
	\node[S2] (c) at (0,1) {};
	\node[S2] (d) at (1,1) {};
	\draw[edge] (a)--(b);
	\draw[edge] (a)--(d);
	\draw[edge] (d)--(b);
	\draw[edge] (a)--(c);
	\draw[edge] (c)--(d);
	\end{tikzpicture}	
	\caption{$\tau>5/2$: constant}
	\label{fig:squareextra2tr}
\end{subfigure}
	\begin{subfigure}[t]{0.19\linewidth}
		\centering
		\begin{tikzpicture}
		\node[S1] (a) at (0,0) {};
		\node[S2] (b) at (1,0) {};
		\node[S2] (c) at (0,1) {};
		\node[S2] (d) at (1,1) {};
		\draw[edge] (a)--(b);
		\draw[edge] (a)--(d);
		\draw[edge] (d)--(b);
		\draw[edge] (a)--(c);
		\draw[edge] (d)--(c);
		\end{tikzpicture}	
		\caption{$\tau<5/2$: $t^{\frac{5-2\tau}{\tau-1}}$}
		\label{fig:squareextra3trsmall}
	\end{subfigure}
	\begin{subfigure}[t]{0.19\linewidth}
	\centering
	\begin{tikzpicture}
	\node[S2] (a) at (0,0) {};
	\node[S2] (b) at (1,0) {};
	\node[S2] (c) at (0,1) {};
	\node[S2] (d) at (1,1) {};
	\draw[edge] (a)--(b);
	\draw[edge] (a)--(d);
	\draw[edge] (d)--(b);
	\draw[edge] (a)--(c);
	\draw[edge] (d)--(c);
	\end{tikzpicture}	
	\caption{$\tau>5/2$: constant}
	\label{fig:squareextra3tr}
\end{subfigure}
	\begin{subfigure}[t]{0.19\linewidth}
		\centering
		\begin{tikzpicture}
		\node[S2] (a) at (0,0) {};
		\node[S2] (b) at (1,0) {};
		\node[S1] (c) at (0,1) {};
		\node[S2] (d) at (1,1) {};
		\draw[edge] (a)--(b);
		\draw[edge] (a)--(d);
		\draw[edge] (b)--(d);
		\draw[edge] (c)--(a);
		\draw[edge] (c)--(d);
		\end{tikzpicture}	
		\caption{$t^{\frac{3-\tau}{\tau-1}}$}
		\label{fig:squareextra4tr}
	\end{subfigure}
	\begin{subfigure}[t]{0.19\linewidth}
		\centering
		\begin{tikzpicture}
		\node[S2] (a) at (0,0) {};
		\node[S2] (b) at (1,0) {};
		\node[S1] (c) at (0,1) {};
		\node[S2] (d) at (1,1) {};
		\draw[edge] (a)--(b);
		\draw[edge] (a)--(d);
		\draw[edge] (d)--(b);
		\draw[edge] (c)--(a);
		\draw[edge] (c)--(d);
		\end{tikzpicture}	
		\caption{$t^{\frac{3-\tau}{\tau-1}}$}
		\label{fig:squareextra5tr}
	\end{subfigure}
	\begin{subfigure}[t]{0.19\linewidth}
		\centering
		\begin{tikzpicture}
		\node[S2] (a) at (0,0) {};
		\node[S1] (b) at (1,0) {};
		\node[S1] (c) at (0,1) {};
		\node[S2] (d) at (1,1) {};
		\draw[edge] (b)--(a);
		\draw[edge] (a)--(d);
		\draw[edge] (b)--(d);
		\draw[edge] (c)--(a);
		\draw[edge] (c)--(d);
		\end{tikzpicture}	
		\caption{$t^{\frac{6-2\tau}{\tau-1}}$}
		\label{fig:squareextra6tr}
	\end{subfigure}
	\begin{subfigure}[t]{0.19\linewidth}
		\centering
		\begin{tikzpicture}
		\tikzstyle{edge} = [draw,thick,->]
		\node[S2] (a) at (0,0) {};
		\node[S2] (b) at (0.75,0.9) {};
		\node[S2] (c) at (1.5,0) {};
		\draw[edge] (a)--(b);
		\draw [thick,->,bend left = 50] (a) to (b);
		\draw[edge] (c)--(b);
		\draw[edge] (a)--(c);
		\end{tikzpicture}	
		\caption{$\tau>5/2$: constant}
		\label{fig:squareextra7tr}
	\end{subfigure}
	\begin{subfigure}[t]{0.19\linewidth}
		\centering
		\begin{tikzpicture}
		\tikzstyle{edge} = [draw,thick,->]
		\node[S1] (a) at (0,0) {};
		\node[S2] (b) at (0.75,0.9) {};
		\node[S3] (c) at (1.5,0) {};
		\draw[edge] (a)--(b);
		\draw [thick,->,bend left = 50] (a) to (b);
		\draw[edge] (c)--(b);
		\draw[edge] (a)--(c);
		\end{tikzpicture}	
		\caption{$\tau<5/2$: $t^{\frac{5-2\tau}{\tau-1}}\log(t)$}
		\label{fig:squareextra8tr}
	\end{subfigure}
	\begin{subfigure}[t]{0.19\linewidth}
		\centering
		\begin{tikzpicture}
		\tikzstyle{edge} = [draw,thick,->]
		\node[S2] (a) at (0,0) {};
		\node[S2] (b) at (0.75,0.9) {};
		\node[S2] (c) at (1.5,0) {};
		\draw[edge] (a)--(b);
		\draw [thick,->,bend left = 50] (a) to (b);
		\draw[edge] (b)--(c);
		\draw[edge] (a)--(c);
		\end{tikzpicture}	
		\caption{$\tau>5/2$: constant}
		\label{fig:squareextra9tr}
	\end{subfigure}
	\begin{subfigure}[t]{0.19\linewidth}
		\centering
		\begin{tikzpicture}
		\tikzstyle{edge} = [draw,thick,->]
		\node[S1] (a) at (0,0) {};
		\node[S2] (b) at (0.75,0.9) {};
		\node[S2] (c) at (1.5,0) {};
		\draw[edge] (a)--(b);
		\draw [thick,->,bend left = 50] (a) to (b);
		\draw[edge] (b)--(c);
		\draw[edge] (a)--(c);
		\end{tikzpicture}	
		\caption{$\tau<5/2$: $t^{\frac{5-2\tau}{\tau-1}}$}
		\label{fig:squareextra10tr}
	\end{subfigure}
	\begin{subfigure}[t]{0.19\linewidth}
		\centering
		\begin{tikzpicture}
		\tikzstyle{edge} = [draw,thick,->]
		\node[S2] (a) at (0,0) {};
		\node[S2] (b) at (0.75,0.9) {};
		\node[S1] (c) at (1.5,0) {};
		\draw[edge] (a)--(b);
		\draw [thick,->,bend left = 50] (a) to (b);
		\draw[edge] (c)--(b);
		\draw[edge] (c)--(a);
		\end{tikzpicture}	
		\caption{$t^{\frac{3-\tau}{\tau-1}}$}
		\label{fig:squareextra11tr}
	\end{subfigure}
	\caption{Order of magnitude of $N_t(H)$ for all merged triangles on 4 vertices and for $2<\tau<3$. Vertices with degree proportional to a constant are light pink, vertices with free degrees are bright red, and vertices of degree proportional to $t^{1/(\tau-1)}$ are dark red. Vertices where the optimizer depends on $\tau$ are gray.	}
	\label{fig:graphlet4tr}
\end{figure}
\begin{figure}[tb]
	\centering
	\begin{subfigure}{0.32\linewidth}
		\centering
		\includegraphics[width=\textwidth]{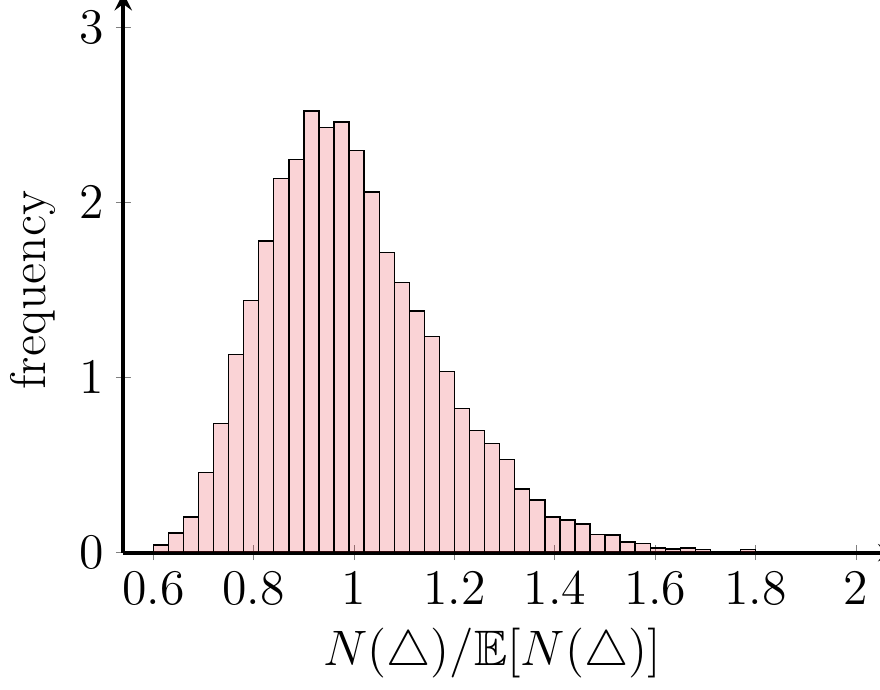}
		\caption{$t=10^5$}
		\label{fig:tr25100000}
	\end{subfigure}
	\begin{subfigure}{0.32\linewidth}
		\centering
		\includegraphics[width=\textwidth]{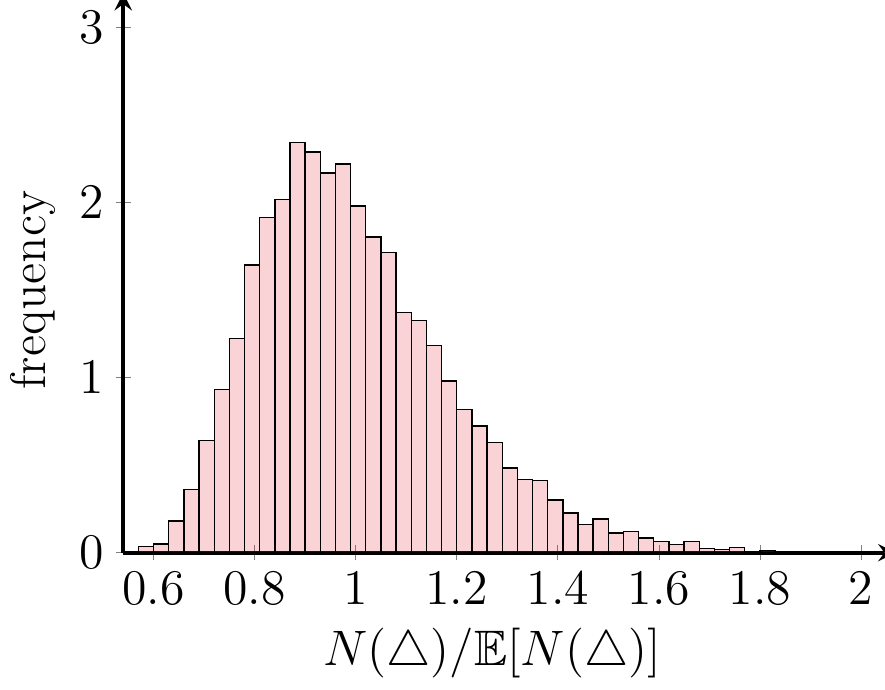}
		\caption{$t=10^6$}
		\label{fig:tr251000000}
	\end{subfigure}
	\begin{subfigure}{0.32\linewidth}
		\centering
		\includegraphics[width=\textwidth]{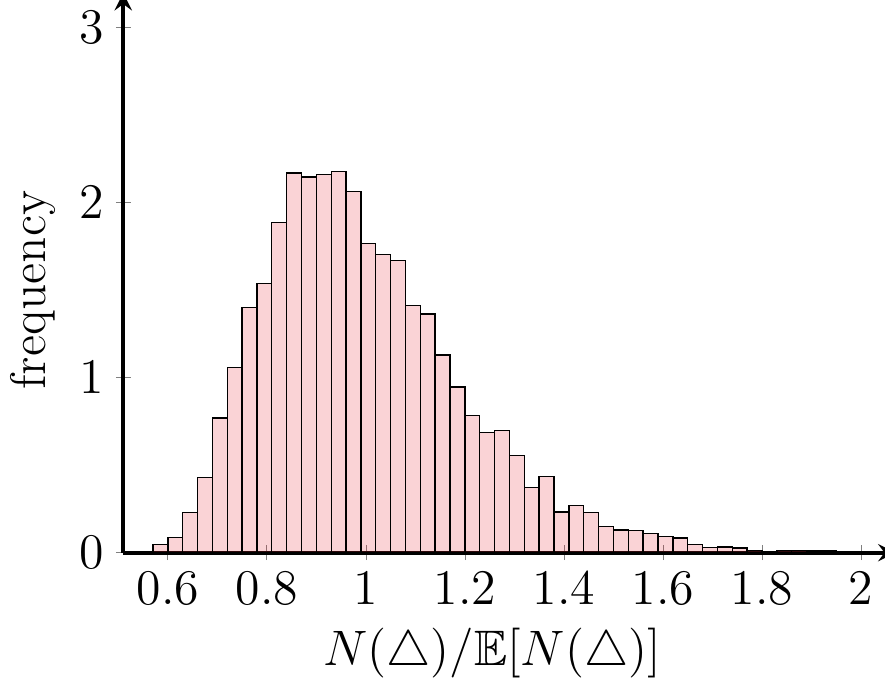}
		\caption{$t=5\cdot 10^6$}
		\label{fig:tr255000000}
	\end{subfigure}
	\caption{Density approximation of the number of triangles in $10^4$ realizations of the preferential attachment model with $\tau=2.5$ and various values of $t$.}
	\label{fig:triadfluct}
\end{figure}

\section{The probability of a subgraph being present}\label{sec:psubgraph}
In this section, we prove the main ingredient for the proof of Theorem~\ref{th:subgraph:expectation}, the probability of a subgraph being present on a given set of vertices. 
The most difficult part of evaluating the probability of a subgraph $H$ being present in $\PA_t$ is that the PAM is constructed recursively.
We consider triangles as an example. We write the event of a {\em labeled} triangle being present by $\{u\stackrel{j_1}{\leftarrow} v,\ u\stackrel{j_2}{\leftarrow}w, \ v\stackrel{j_3}{\leftarrow} w\}$, where $\{u\stackrel{j}{\leftarrow}v\}$ denotes the event that the $j$-th edge of vertex $v$ is attached to vertex $u$. Notice that in this way we express precisely which edges we consider in the triangle construction. Then,
\eqn{
\label{for:probtriangle}
\begin{split}
&\pr\left(u\stackrel{j_1}{\leftarrow} v,\ u\stackrel{j_2}{\leftarrow}w, \ v\stackrel{j_3}{\leftarrow} w\right) \\
	&= \E\bigg[\pr\left(u\stackrel{j_1}{\leftarrow} v,\ u\stackrel{j_2}{\leftarrow}w, \ v\stackrel{j_3}{\leftarrow} w \ | \ \PA_{t-1, j_3-1}\right) \bigg]
	\\ &= \E\bigg[\I\{u\stackrel{j_1}{\leftarrow} v,\ u\stackrel{j_2}{\leftarrow}w\}\frac{D_v(w-1,j_3-1)+\delta}{2m(w-2)+(j_3-1)+ (w-1)\delta}\bigg].
\end{split}
}
In \eqref{for:probtriangle}, the indicator function $\I\{u\stackrel{j_1}{\leftarrow} v,\ u\stackrel{j_2}{\leftarrow}w\}$ and $D_v(w-1,j_3-1)$ are not independent, therefore evaluating the expectation on the right-hand side of~ \eqref{for:probtriangle} is not easy. A possible solution for the evaluation of the expectation in~ \eqref{for:probtriangle} is to rescale $D_v(w-1,j_3-1)$ with an appropriate constant to obtain a martingale, and then recursively use the conditional expectation. For a detailed explanation of this, we refer to~ \cite{Bol01,Szym05} and \cite[Section 8.3]{hofstad2009}. This method is hardly tractable due to the complexity of the constants appearing (see Remark~ \ref{rem:subgraphs:constant} for a more detailed explanation).

We use a different approach to evaluate of the expectation in ~\eqref{for:probtriangle} using the interpretation of the PAM as a P\'olya urn graph, focusing mainly on the {\em the age (the indices) of the vertices}, and not on precise constants.
We give a lower and upper bound of the probability of having a finite number of edges present in the graph, as formulated in the following lemma:

\begin{Lemma}[Probability of finite set of labeled edges]
\label{lem:finiteedges}
Fix $\ell\in\N$. For vertices $\sub{u}_\ell = (u_1,\ldots,u_\ell)\in[t]^\ell$ and $\sub{v}_\ell = (v_1,\ldots,v_\ell)\in[t]^\ell$ and edge labels $\sub{j}_\ell = (j_1,\ldots,j_\ell)\in [m]^\ell$, consider the corresponding finite set of $\ell$ distinct labeled edges $M_\ell(\sub{u}_\ell,\sub{v}_\ell,\sub{j}_\ell)$. Assume that the subgraph defined by set $M_\ell(\sub{u}_\ell,\sub{v}_\ell,\sub{j}_\ell)$ is attainable in the sense of Definition ~\ref{def:attainable}. Define $\chi=(m+\delta)/(2m+\delta)$. Then:
\begin{enumerate}
\item There exist two constants $c_1(m,\delta, \ell),c_2(m,\delta, \ell)>0$ such that
\eqn{
\label{for:lem:finiteedges}
	c_1(m,\delta, \ell)\prod_{l=1}^{\ell}u_l^{\chi-1}v_l^{-\chi}
		  \leq \ \pr\left(M_\ell(\sub{u}_\ell,\sub{v}_\ell,\sub{j}_\ell) \subseteq E(\PA_t)\right) \
	 \leq c_2(m,\delta, \ell)\prod_{l=1}^{\ell}u_l^{\chi-1}v_l^{-\chi}.
}
\item  Define the set 
\eqn{\label{for:lem:finiteedges:2}
	J(\sub{u}_\ell, \sub{v}_\ell) = \left\{\sub{j}_\ell\in[m]^\ell \colon M_\ell(\sub{u}_\ell,\sub{v}_\ell,\sub{j}_\ell) \subseteq E(\PA_t)\right\}.
}
Then, there exist two constants $\hat{c}_1(m,\delta,\ell), \hat{c}_2(m,\delta,\ell)>0$ such that 
\eqn{\label{for:lem:finiteedges:3}
	\hat{c}_1(m,\delta,\ell)\prod_{l=1}^{\ell}u_l^{\chi-1}v_l^{-\chi} \leq \E[|J(\sub{u}_\ell, \sub{v}_\ell)|] 
		\leq \hat{c}_2(m,\delta,\ell)\prod_{l=1}^{\ell}u_l^{\chi-1}v_l^{-\chi} .
}
\end{enumerate}
\end{Lemma}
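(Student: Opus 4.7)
The plan is to exploit the P\'olya urn representation of $\PA_t$ introduced in Section~\ref{sec:urngraph}. In this representation each vertex $v\geq 2$ carries an independent Beta-distributed weight $\psi_v$ whose parameters are of order $m+\delta$ and $(2m+\delta)v$ respectively, and, conditionally on $\sub{\psi}_t=(\psi_2,\dots,\psi_t)$, the edge statuses of $\PA_t$ are mutually independent, and the conditional probability that a slot $j\in[m]$ of vertex $v'$ attaches to $u<v'$ takes the form
\[
	\phi_{u,v'}(\sub{\psi})\;=\;\psi_u\prod_{w=u+1}^{v'-1}(1-\psi_w),
\]
independently of $j$. Under attainability, the $\ell$ labeled edges of $M_\ell(\sub u_\ell,\sub v_\ell,\sub j_\ell)$ occupy distinct slots, so their joint presence is the intersection of independent events conditionally on $\sub{\psi}_t$, and consequently
\[
\pr\big(M_\ell(\sub u_\ell,\sub v_\ell,\sub j_\ell)\subseteq E(\PA_t)\big)\;=\;\E\Big[\prod_{l=1}^{\ell}\phi_{u_l,v_l}(\sub{\psi})\Big].
\]

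The second step is to factor this expectation across vertices. Setting $a_z=\#\{l:u_l=z\}$ (the in-degree of $z$ in the subgraph) and $b_z=\#\{l:u_l<z<v_l\}$, the independence of the $\psi_z$'s gives
\[
\E\Big[\prod_{l=1}^{\ell}\phi_{u_l,v_l}(\sub{\psi})\Big]\;=\;\prod_{z=2}^{t}\E\big[\psi_z^{a_z}(1-\psi_z)^{b_z}\big].
\]
Each factor is a ratio of Beta functions, and since attainability bounds both $a_z$ and $b_z$ by $\ell$, standard asymptotics for Gamma ratios with shifted arguments yield two positive constants depending only on $(a_z,b_z,m,\delta)$ such that
\[
\E\big[\psi_z^{a_z}(1-\psi_z)^{b_z}\big]\;\asymp\;z^{-a_z}\Big(1-\frac{\chi}{z}\Big)^{b_z}
\]
uniformly in $z\geq 2$.

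The third step is to multiply over $z$ and recognise the telescoping. The factors $z^{-a_z}$ contribute $\prod_l u_l^{-1}$, while the factors $(1-\chi/z)^{b_z}$ rearrange into
\[
\prod_{z=2}^t\Big(1-\frac{\chi}{z}\Big)^{b_z}\;=\;\prod_{l=1}^\ell\prod_{w=u_l+1}^{v_l-1}\!\Big(1-\frac{\chi}{w}\Big)\;=\;\prod_{l=1}^\ell\frac{\Gamma(v_l-\chi)\,\Gamma(u_l+1)}{\Gamma(v_l)\,\Gamma(u_l+1-\chi)},
\]
which is sandwiched between two positive constants times $\prod_l u_l^{\chi}v_l^{-\chi}$, uniformly in $2\leq u_l<v_l\leq t$. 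Combining the two contributions produces the target product $\prod_l u_l^{\chi-1}v_l^{-\chi}$ claimed in~\eqref{for:lem:finiteedges}. For part 2, since $\phi_{u,v'}$ does not depend on the slot index $j$, the probability computed in part 1 is the same for every admissible labelling $\sub j_\ell\in J(\sub u_\ell,\sub v_\ell)$, and summing over $\sub j_\ell$ multiplies it by a bounded combinatorial factor: by attainability every source vertex has at most $m$ outgoing slots, so the number of admissible labellings lies between $1$ and $m^\ell$, which yields~\eqref{for:lem:finiteedges:3} with adjusted constants.

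The main technical obstacle I anticipate is establishing the $\asymp$ bound for $\E[\psi_z^{a_z}(1-\psi_z)^{b_z}]$ uniformly down to small $z$: when some $u_l$ is of order one, the large-argument Stirling expansion of the Gamma ratios does not apply, and one must work with exact ratios of Gamma functions at bounded arguments and verify by hand that they remain sandwiched between positive constants times $z^{-a_z}$. Because attainability bounds all exponents by $\ell$, this is really a finite case analysis rather than a serious difficulty; the issue is purely one of uniformity in the constants rather than of scaling.
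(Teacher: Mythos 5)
Your proposal is correct in substance and takes a genuinely different route from the paper. The paper also passes to the P\'olya urn graph and conditions on $\sub{\psi}_t$, but it keeps the conditional edge probabilities in the form $\psi_{u_l}S_{u_l}/S_{v_l-1}$ and then invokes the concentration result of Lemma~\ref{lem:urnpos} (that $\max_{i\in[t]}|S_i-(i/t)^{\chi}|\plim 0$) to replace each ratio $S_{u_l}/S_{v_l-1}$ by $(1\pm\varepsilon)(u_l/v_l)^{\chi}$ on a high-probability event; the only expectation left to compute is then $\E[\prod_l\psi_{u_l}]$, a product over the at most $\ell$ distinct receiving vertices, each factor being a Beta moment $\asymp \bar u_h^{-\din_h}$. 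You instead expand $S_{u_l}/S_{v_l-1}=\prod_{w=u_l+1}^{v_l-1}(1-\psi_w)$, factor the full expectation over \emph{all} vertices as $\prod_z\E[\psi_z^{a_z}(1-\psi_z)^{b_z}]$, and telescope. Your route is more self-contained (no appeal to the concentration lemma of \cite{berger2014}) and is in fact the strategy the paper itself uses later for the exact triangle asymptotics in Section~\ref{sec:numberT:proof}; the paper's route buys a shorter argument because only a bounded number of nontrivial moment factors survive. Your treatment of part~2 coincides with the paper's.

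One step of your write-up is stated too weakly to support the conclusion as written. You claim $\E[\psi_z^{a_z}(1-\psi_z)^{b_z}]\asymp z^{-a_z}(1-\chi/z)^{b_z}$ with constants depending on $(a_z,b_z,m,\delta)$, and you then multiply over $z=2,\dots,t$. For the at most $2\ell$ vertices with $a_z\geq 1$ a bounded constant per factor is harmless, but the number of vertices with $a_z=0$, $b_z\geq 1$ grows with $t$, so per-vertex constants bounded away from $1$ would accumulate into a factor exponential in $t$ and destroy the telescoping. What you actually need (and what is true) is the sharper statement that for $a_z=0$ the exact Beta moment $\E[(1-\psi_z)^{b_z}]=\prod_{i=0}^{b_z-1}(\beta_z+i)/(\alpha+\beta_z+i)$ differs from $(1-\chi/z)^{b_z}$ by a multiplicative factor $1+O_{\ell,m,\delta}(1/z^{2})$, so that the product of the errors over all $z\in[t]$ converges to a constant depending only on $\ell,m,\delta$. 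Your closing remark identifies the uniformity issue only for small $z$; the real point is the summability of the per-vertex error over all $z$ up to $t$. With that refinement made explicit, your argument is complete.
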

Formula~\eqref{for:lem:finiteedges} in the above lemma bounds the probability that a subgraph is present on vertices $\boldsymbol{u}_\ell$ and $\boldsymbol{v}_\ell$ such that the $j_i$-th edge from $u_i$ connects to $v_i$. Notice that~\eqref{for:lem:finiteedges} is independent of the precise edge labels $(j_1,\ldots,j_\ell)$. To be able to count all subgraphs, and not only subgraphs where the edge labels have been specified,~\eqref{for:lem:finiteedges:3} bounds the expected number of times a specific subgraph is present on vertices $\boldsymbol{u}_\ell$ and $\boldsymbol{v}_\ell$. This number is given exactly by the elements in set $J(\sub{u}_\ell,\sub{v}_\ell)$ as in~\eqref{for:lem:finiteedges:2}. Note that the expectation in~\eqref{for:lem:finiteedges:3} may be larger than one, due to the fact that the PAM is a multigraph. 

Lemma~\ref{lem:finiteedges} gives a bound on the probability of presence of $\ell\in\N$ distinct edges in the graph as function of the indices $(u_1,v_1),\ldots,(u_\ell,v_\ell)$ of the endpoints of the $\ell$ edges. 
Due to the properties of PAM, the index of a vertex is an indicator of its degree, due to the old-get-richer effect.  Lemma~\ref{lem:finiteedges} is a stronger result than \cite[Corollary 2.3]{DSvdH}, which gives an upper bound of the form in~ \eqref{for:lem:finiteedges} only for self-avoiding paths.


The proof of Lemma~\ref{lem:finiteedges} is based on the interpretation of the PAM in Definition~\ref{def:seqPA} as a urn experiment as proposed in \cite{berger2014}. We now introduce urn schemes and state the preliminary results we need for the proof of Lemma~ \ref{lem:finiteedges}, which is given in Section~\ref{sec:proof:lem:finiteedges}.

\subsection{P\'olya urn graph}
\label{sec:urngraph}
An urn scheme consists of an urn, with blue balls and red balls. At every time step, we draw a ball from the urn and we replace it by two balls of the same color. We start with $B_0 = b_0$ blue balls and $R_0 = r_0$ red balls. We consider two weight functions
\eqn{
\label{for:urnscheme}
	W_b(k) = a_b + k,\quad \mbox{ and } \quad W_r(k) = a_r+k.
} 
Conditionally on the number of blue balls $B_n$ and red balls $R_n$, at time $n+1$ the probability of drawing a blue ball is equal to
$$
	\frac{W_b(B_n)}{W_b(B_n)+W_r(R_n)}.
$$
The evolution of the number of balls $((B_n,R_n))_{n\in\N}$ obeys~\cite[Theorem 4.2]{hofstad2018+}
\eqn{
	\pr\left(B_n = B_0 + k\right) = \E\left[\pr\left(\left. \mathrm{Bin}(n,\psi)=k\right|\psi\right)\right],
}
where $\psi$ has a Beta distribution with parameters $B_0+a_b$ and $R_0+a_r$. In other words, the number of blue balls (equivalently, of red balls) is given by a Binomial distribution with a random probability of success $\psi$ (equivalently, $1-\psi$). 
Sometimes we call the random variable $\psi$ the {\em intensity} or {\em strength} of the blue balls in the urn. We can also see the urn process as two different urns, one containing only blue balls and the other only red balls, and we choose a urn proportionally to the number of balls in the urns. In this case, the result is the same, but we can say that $\psi$ is the strength of the blue balls urn and $1-\psi$ is the strength of the red balls urn.

The sequential model $\PA_t$ can be interpreted as experiment with $t$ urns, where the number of balls  in each urn represent the degree of a vertex in the graph. First, we introduce a random graph model:
\begin{Definition}[P\'olya urn graph]
\label{def:urngraph}
Fix $m\geq 1$ and $\delta>-m$. Let $t\in\N$ be the size of the graph. Let $\psi_1 = 1$, and consider $\psi_2,\ldots,\psi_t$ independent random variables, where 
\eqn{
\label{for:psik}
	\psi_k \stackrel{d}{=} \mathrm{Beta}\left(m+\delta, m(2k-3)+(k-1)\delta\right).
} 
Define
\eqn{
\label{for:intervalsPU}
	\varphi_j = \psi_j\prod_{i=j+1}^t (1-\psi_i), \quad S_k = \sum_{j=1}^k \varphi_j, \quad I_k = [S_{k-1},S_k).
}
Conditioning on $\psi_1,\ldots,\psi_t$, let $\{U_{k,j}\}_{k=2,\ldots,t}^{j=1,\ldots,m}$ be independent random variables, with $U_{k,j}$ uniformly distributed on $[0,S_{k-1}]$. Then, the  corresponding P\'olya urn graph $\PU_t$ is the graph of size $t$ where, for $u<v$, the number of edges between $u$ and $v$ is equal to the number of variables $U_{v,j}$ in $I_u$, for $j=1,\ldots,m$ (multiple edges are allowed).
\end{Definition}

The two sequences of graphs $(\PA_t)_{t\in\N}$ and $(\PU_t)_{t\in\N}$ have the same distribution~\cite[Theorem 2.1]{berger2014}, \cite[Chapter 4]{hofstad2018+}.
The Beta distributions in Definition \ref{def:urngraph} come from the P\'olya urn interpretation of the sequential model, using urns with affine weight functions.

The formulation in Definition \ref{def:urngraph} in terms of urn experiments allows us to investigate the presence of subgraphs in an easier way than with the formulation given in Definition \ref{def:seqPA} since the dependent random variables in~\eqref{for:probtriangle}, are replaced by the product of independent random variables. We now state two lemmas that are the main ingredients for proving Lemma~\ref{lem:finiteedges}:

\begin{Lemma}[Attachment probabilities]
\label{lem:attach:urngraph}
	Consider $\mathrm{PU}_t$ as in Definition \ref{def:urngraph}. Then,
	\begin{enumerate}
		\item for $k\in[t]$, 
		\eqn{
		\label{for:SkUrnP}
			S_k = \prod_{h=k+1}^t (1-\psi_h);
		}
		\item conditioning on $\psi_1,\ldots,\psi_t$, the probability that the $j$-th edge of $k$ is attached to $v$ is equal to 
	\eqn{
	\label{for:edgeUrnP}
		\pr\left(U_{k,j} \in I_v\ | \ \psi_1,\ldots,\psi_t\right) = \psi_v\frac{S_v}{S_{h-1}} = \psi_{v}\prod_{h=v+1}^{k-1}(1-\psi_h).
	}
	\end{enumerate}
\end{Lemma}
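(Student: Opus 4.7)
The plan is to derive both parts directly from Definition~\ref{def:urngraph} by short algebraic manipulations; no probabilistic sophistication is needed beyond the conditional-uniform property of the $U_{k,j}$.

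For part~(1), I would begin from the defining sum $S_k=\sum_{j=1}^k\varphi_j=\sum_{j=1}^k\psi_j\prod_{i=j+1}^t(1-\psi_i)$ and exploit the telescoping identity
\[
\psi_j\prod_{i=j+1}^t(1-\psi_i) \;=\; \prod_{i=j+1}^t(1-\psi_i)\;-\;\prod_{i=j}^t(1-\psi_i),
\]
which is just $\psi_j=1-(1-\psi_j)$ multiplied through. Summing over $j=1,\ldots,k$ the intermediate terms cancel pairwise, leaving
\[
S_k \;=\; \prod_{i=k+1}^t(1-\psi_i)\;-\;\prod_{i=1}^t(1-\psi_i).
\]
Since Definition~\ref{def:urngraph} fixes $\psi_1=1$, the second product vanishes and the claimed formula follows. (An equivalent backward induction on $k$ starting from the empty product $S_t=1$ and using $S_{k-1}=S_k-\varphi_k=(1-\psi_k)S_k$ would give the same conclusion.)

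For part~(2), I would use the construction of $\PU_t$: conditionally on $(\psi_1,\ldots,\psi_t)$, the variable $U_{k,j}$ is uniform on $[0,S_{k-1}]$ and the intervals $I_v=[S_{v-1},S_v)$ are disjoint with length $|I_v|=S_v-S_{v-1}=\varphi_v$. Therefore
\[
\Prob{U_{k,j}\in I_v \mid \psi_1,\ldots,\psi_t} \;=\; \frac{\varphi_v}{S_{k-1}}.
\]
Rewriting $\varphi_v=\psi_v\prod_{i=v+1}^t(1-\psi_i)=\psi_v S_v$ via part~(1) yields the first equality $\psi_v S_v/S_{k-1}$. Substituting the product form of both $S_v$ and $S_{k-1}$ and cancelling the common factor $\prod_{h=k}^t(1-\psi_h)$ reduces the ratio to $\prod_{h=v+1}^{k-1}(1-\psi_h)$, which is the second equality.

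There is no genuinely hard step here; the statement is essentially bookkeeping enabled by the P\'olya urn construction and the choice $\psi_1=1$. The only point that requires care is index discipline: one must keep track of the product ranges and use the empty-product convention consistently, both in the telescoping step of part~(1) and in the cancellation leading to the ratio $S_v/S_{k-1}$ in part~(2).
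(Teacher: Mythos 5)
Your proposal is correct and follows essentially the same route as the paper, which simply asserts that the lemma follows from Definition~\ref{def:urngraph} together with the product form of $S_k$ and defers the computation to the proof of Theorem~2.1 in the cited work of Berger et al.; your telescoping identity (using $\psi_1=1$ to kill the boundary term) and the ratio $\varphi_v/S_{k-1}$ are exactly the omitted bookkeeping. As a minor aside, your $S_{k-1}$ in the denominator is the intended quantity --- the $S_{h-1}$ appearing in the lemma's display is a typo.
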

The proof of Lemma~\ref{lem:attach:urngraph} follows from Definition~\ref{def:urngraph}, and the fact that $(S_k)_{k\in[t]}$ as in \eqref{for:intervalsPU} can be written as in \eqref{for:SkUrnP} (see the proof of \cite[Theorem 2.1]{berger2014}).

Before proving Lemma~\ref{lem:finiteedges}, we state a second result on the concentration of the positions $\{S_k\}_{k\in[t]}$ in the urn graph $(\PU_t)_{t\in\N}$. In particular, it shows that these positions concentrate around deterministic values:
\begin{Lemma}[Position concentration in $\mathrm{PU}_t$]
\label{lem:urnpos}
Consider a P\'olya urn graph as in Definition~\ref{def:urngraph}. Let $\chi = (m+\delta)/(2m+\delta)$. Then, for every $\omega,\varepsilon>0$ there exists $N_0 = N_0(\omega,\varepsilon)\in\N$ such that, for every $t\geq N_0$, 
\eqn{
\label{for:urnposition}
	\pr\bigg(\bigcap_{ i=N_0}^t\bigg\{ \left|S_i-\left(\frac{i}{t}\right)^\chi\right|\leq \omega\left(\frac{i}{t}\right)^\chi\bigg\}\bigg)\geq 1-\varepsilon
}
and, for $t$ large enough,
\eqn{
\label{for:urnpositionTot}
	\pr\left(\max_{i\in[t]}\left| S_i-\left(\frac{i}{t}\right)^\chi\right|\geq \omega\right)\leq \varepsilon
}
As a consequence, as $t\rightarrow\infty$, 
\eqn{
	\label{for:urnposition:conv}
		\max_{i\in[t]}\left| S_i-\left(\frac{i}{t}\right)^\chi\right|\stackrel{\pr}{\longrightarrow}0.
}
\end{Lemma}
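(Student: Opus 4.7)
The plan is to work in logarithmic coordinates. By the product formula \eqref{for:SkUrnP} we have $-\log S_i=\sum_{h=i+1}^t Y_h$ where $Y_h:=-\log(1-\psi_h)$, and the structural fact I would exploit is that, by Definition~\ref{def:urngraph}, the $Y_h$ are \emph{independent}. The problem therefore splits into estimating $\E[-\log S_i]$ and controlling the fluctuation around this mean uniformly in $i$.

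For the mean, $\psi_h\sim\mathrm{Beta}(m+\delta,\,m(2h-3)+(h-1)\delta)$ has mean $\chi/(h-1)$ and variance $O(1/h^2)$; Taylor expanding $-\log(1-x)=x+\tfrac12 x^2+\cdots$ together with standard Beta moment estimates gives $\E[Y_h]=\chi/(h-1)+O(1/h^2)$ uniformly in $h\ge h_0$, for a suitably large fixed $h_0$. Summing and using $\sum_{h=i+1}^t 1/(h-1)=\log(t/i)+O(1/i)$ yields
\begin{equation*}
\E[-\log S_i]=\chi\log(t/i)+O(1/i),\qquad i\ge h_0.
\end{equation*}

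For the fluctuations, the same Taylor expansion gives $\Var{Y_h}=O(1/h^2)$, hence $\sum_{h=i+1}^t\Var{Y_h}=O(1/i)$. Reindexing $k=t-i$ so that the partial sums of the centered $Y_h$'s form a standard sum of independent mean-zero variables, Kolmogorov's maximal inequality yields
\begin{equation*}
\pr\!\left(\max_{N_0\le i\le t}\bigl|\log S_i-\E[\log S_i]\bigr|>\eta\right)\le \eta^{-2}\!\sum_{h=N_0+1}^t\!\Var{Y_h}=O(\eta^{-2}N_0^{-1}).
\end{equation*}
Choosing $N_0=N_0(\omega,\varepsilon)$ large and $\eta$ small, and then exponentiating via $|\e^x-1|\le 2|x|$ for small $x$, delivers the multiplicative bound \eqref{for:urnposition}.

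The additive bound \eqref{for:urnpositionTot} then follows by splitting $[1,t]$ at $N_0$: on $[N_0,t]$ the multiplicative bound gives $|S_i-(i/t)^\chi|\le \omega(i/t)^\chi\le \omega$, whereas for $i\le N_0$ both $S_i\le S_{N_0}$ and $(i/t)^\chi\le (N_0/t)^\chi$ lie below $\omega/2$ once $t$ is large, since the multiplicative bound applied at $i=N_0$ forces $S_{N_0}\le 2(N_0/t)^\chi\to 0$. Finally, \eqref{for:urnposition:conv} follows from \eqref{for:urnpositionTot} by letting $\omega\downarrow 0$ along a countable sequence. The principal technical obstacle is to make the Beta moment expansions uniform in $h$; I would handle this by separating the initial finite range $h\le h_0$ (where $Y_h$ is merely $O(1)$ and absorbed into a constant) from the asymptotic regime $h\to\infty$, where the Taylor expansion cleanly controls both mean and variance.
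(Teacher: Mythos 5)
The paper does not prove this lemma at all: it is quoted with the single line ``The proof of Lemma~\ref{lem:urnpos} is given in [Berger--Borgs--Chayes--Saberi, Lemma~3.1]''. Your proposal therefore supplies an argument where the paper only supplies a citation, and the argument you give is sound and follows essentially the same strategy as the cited proof: pass to $-\log S_i=\sum_{h=i+1}^t Y_h$ with $Y_h=-\log(1-\psi_h)$ independent, match the mean to $\chi\log(t/i)$, control the fluctuations uniformly in $i$ by a maximal inequality for the partial sums indexed from $t$ downward, and exponentiate. The quantifier structure also comes out right: your $N_0$ depends only on $(\omega,\varepsilon)$ because $\sum_{h>N_0}\Var{Y_h}=O(1/N_0)$ uniformly in $t$, and the passage from \eqref{for:urnposition} to \eqref{for:urnpositionTot} via monotonicity of $i\mapsto S_i$ and $S_{N_0}\le(1+\omega)(N_0/t)^\chi\to0$ is correct. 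The one step you should tighten is the claim $\E[Y_h]=\chi/(h-1)+O(1/h^2)$: termwise Taylor expansion of $-\log(1-x)$ is delicate near $x=1$ because the moments $\E[\psi_h^k]$ carry constants growing in $k$, so the series manipulation is not immediate. The clean fix is the exact identity $\E[-\log(1-X)]=\psi(\alpha+\beta)-\psi(\beta)$ for $X\sim\mathrm{Beta}(\alpha,\beta)$ (digamma function), which with $\alpha=m+\delta$ and $\beta=(2m+\delta)h-2m-(m+\delta)$ gives $\E[Y_h]=\chi/h+O(1/h^2)$ directly, and similarly $\Var{Y_h}=\psi'(\beta)-\psi'(\alpha+\beta)=O(1/h^2)$; alternatively, split off the event $\{\psi_h>1/2\}$, whose probability and contribution are exponentially small in $h$ since the Beta density decays like $(1-x)^{\beta_h-1}$. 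With that repair the proof is complete.
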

The proof of Lemma \ref{lem:urnpos} is given in \cite[Lemma 3.1]{berger2014}.

\subsection{Proof of Lemma \ref{lem:finiteedges}}
\label{sec:proof:lem:finiteedges}
We now prove Lemma \ref{lem:finiteedges}, starting with the proof of \eqref{for:lem:finiteedges}. Fix $\sub{u}_\ell,\sub{v}_\ell,\sub{j}_\ell$. In the proof, we denote $M_\ell(\sub{u}_\ell,\sub{v}_\ell,\sub{j}_\ell)$ simply by $M_\ell$ to keep notation light. 
We use the fact that the P\'olya urn graph $\PU_t$ and $\PA_t$ have the same distribution and evaluate $\pr\left(M_\ell \subseteq E(\PU_t)\right)$. We consider $\ell$ distinct labeled edges, so we can use \eqref{for:edgeUrnP} to write
\eqn{
\label{for:finiteedges:0}
	\pr\left(M_\ell \subseteq E(\PU_t)\ | \ \psi_1,\ldots,\psi_t\right) = \prod_{l=1}^\ell \psi_{u_l}\frac{S_{u_l}}{S_{v_l-1}}.
}
Now fix $\varepsilon>0$. Define $\Ee_\varepsilon := \{\max_{i\in[t]}\left| S_i-\left(\frac{i}{t}\right)^\chi\right|\leq\varepsilon\}$. By \eqref{for:urnposition:conv}, and the fact that the product of the random variables in \eqref{for:finiteedges:0} is bounded by 1,
\eqn{
\label{for:finiteedges:1}
	\E\bigg[\prod_{l=1}^\ell \psi_{u_l}\frac{S_{u_l}}{S_{v_l-1}}\bigg] = \E\bigg[\I_{\Ee_\varepsilon}\prod_{l=1}^\ell \psi_{u_l}\frac{S_{u_l}}{S_{v_l-1}}\bigg] + o(1).
}
On the event $\Ee_\varepsilon$, we have, for every $l\in[\ell]$, 
\eqn{
\label{for:finiteedges:2}
	(1-\varepsilon)\left(\frac{u_l}{v_l}\right)^\chi \leq \frac{S_{u_l}}{S_{v_l-1}}\leq (1+\varepsilon)\left(\frac{u_l}{v_l}\right)^\chi,
}
where in \eqref{for:finiteedges:2} we have replaced $v_l-1$ with $v_l$ with a negligible error. Notice that since $v_l$ is always the source of the edge, this implies $v_l\geq 2$, therefore this is allowed. Using \eqref{for:finiteedges:2} in \eqref{for:finiteedges:1} we obtain  
\eqn{
\label{for:finiteedges:3}
\begin{split}
	(1-\varepsilon)^\ell \prod_{l=1}^\ell \left(\frac{u_l}{v_l}\right)^\chi\E\bigg[\I_{\Ee_\varepsilon}\prod_{l=1}^\ell \psi_{u_l}\bigg]
		&\leq \pr\left(M_\ell \subseteq E(\PU_t)\right) \\
		&\leq 
	(1+\varepsilon)^\ell \prod_{l=1}^\ell \left(\frac{u_l}{v_l}\right)^\chi\E\bigg[\I_{\Ee_\varepsilon}\prod_{l=1}^\ell \psi_{u_l}\bigg].
\end{split}
}
Even though $\psi_1\ldots,\psi_t$ depend on $\Ee_\varepsilon$,  it is easy to show that we can ignore $\I_{\Ee_\varepsilon}$ in \eqref{for:finiteedges:3} and obtain a similar bound. Therefore
\eqn{
\label{for:finiteedges:4}
	(1-\varepsilon)^\ell \prod_{l=1}^\ell \left(\frac{u_l}{v_l}\right)^\chi\E\bigg[\prod_{l=1}^\ell \psi_{u_l}\bigg]\leq 
	\pr\left(M_\ell \subseteq E(\PU_t)\right)\leq 
	(1+\varepsilon)^\ell \prod_{l=1}^\ell \left(\frac{u_l}{v_l}\right)^\chi\E\bigg[\prod_{l=1}^\ell \psi_{u_l}\bigg]
}
What remains is to evaluate the expectation in \eqref{for:finiteedges:4}. We assumed to have $\ell$ distinct edges, that does not imply that the vertices $u_1,v_1,\ldots,u_\ell,v_\ell$ are distinct. The expectation in \eqref{for:finiteedges:4} depends only on the receiving vertices of the $\ell$ edges, namely $u_1,\ldots,u_\ell$. 

Let $\bar{u}_1,\ldots, \bar{u}_k$ denote the  $k\leq \ell$ distinct elements that appear among $ u_1,\ldots,u_\ell$. For $h\in[k]$, the vertex $\bar{u}_h$ appears in the product inside the expectation in \eqref{for:finiteedges:4} with multiplicity $\din_h$, which is the degree of vertex $\bar{u}_k$ in the subgraph defined by $M_\ell$. As a consequence, we can write
\eqn{
\label{for:finiteedges:42}
	\E\bigg[\prod_{l=1}^\ell \psi_{u_l}\bigg] = \E\bigg[\prod_{h=1}^k \psi_{\bar{u}_h}^{\din_h}\bigg] = \prod_{h=1}^k\E\bigg[\psi_{\bar{u}_h}^{\din_h}\bigg],
}
where in \eqref{for:finiteedges:42} we have used the fact that $\psi_1,\ldots,\psi_t$ are all independent.  Notice that $\E[\psi_1^d] = 1$ for all $d\geq 0$, since $\psi_1\equiv 1$. Therefore, if  $\bar{u}_h=1$ for some $h\in[k]$, $\E[\psi_{\bar{u}_h}^d] = 1$ and the terms depending on the first vertex contribute to the expectation in \eqref{for:finiteedges:42} by a constant. 

For the terms where  $\bar{u}_h\geq2$, recall that, if $X(\alpha,\beta)$ is a Beta random variable, then, for any integer $d\in\N$,  
$$
	\E[X(\alpha,\beta)^d] = \frac{\alpha(\alpha+1)\cdots(\alpha+d-1)}{(\alpha+\beta)(\alpha+\beta+1)\cdots(\alpha+\beta+d-1)}.
$$ 
Since $\psi_{\bar{u}_h}$ is Beta distributed with parameters $m+\delta$ and $2(\bar{u}_h-3)+(\bar{u}_h-1)\delta$, 
\eqn{
\label{for:finiteedges:5}
\begin{split}
	\E\bigg[\psi_{\bar{u}_h}^{\din_h}\bigg] & = 
		\frac{(m+\delta)\cdots(m+\delta+\din_h-1)}
			{[m(2\bar{u}_h-2)+\bar{u}_h\delta]\cdots[m(2\bar{u}_h-2)+\bar{u}_h\delta+\din_h-1]}\\
			& = \bar{u}_h^{-\din_h}\frac{(m+\delta)\cdots(m+\delta+\din_h-1)}
			{[2m+\delta-(2m)/\bar{u}_h]\cdots[2m+\delta+(\din_h-1-2m)/\bar{u}_h]}.
\end{split}
}
Notice that if $\bar{u}_h\geq 2$, uniformly in $t$ and the precise choice of the $\ell$ edges,
$$
	(m+\delta)^{-\ell}\leq \bigg([2m+\delta-(2m)/\bar{u}]\cdots[2m+\delta+(\din_h-1-2m)/\bar{u}]\bigg)^{-1}\leq (2m+\delta+\ell)^{-\ell}.
$$
As a consequence, we can find two constants $c_1(m,\delta, \ell), c_2(m,\delta, \ell)$ such that
\eqn{
\label{for:finiteedges:6}
		c_1(m,\delta, \ell)\prod_{h=1}^k\bar{u}_h^{-\din_h}\leq \prod_{h=1}^k\E\bigg[\psi_{\bar{u}_h}^{\din_h}\bigg]\leq c_2(m,\delta, \ell)\prod_{h=1}^k\bar{u}_h^{-\din_h}.
}
We now use \eqref{for:finiteedges:6} in \eqref{for:finiteedges:4} to obtain 
\eqn{
\label{for:finiteedges:7}
\begin{split}
	 c_1(m,\delta, \ell)(1-\varepsilon)^\ell  \prod_{l=1}^\ell \left(\frac{u_l}{v_l}\right)^\chi \prod_{h=1}^k\bar{u}_h^{-\din_h} & \leq \pr\left(M_\ell\subseteq E(\PU_t)\right)\\
	& \leq  c_2(m,\delta, \ell)(1+\varepsilon)^\ell  \prod_{l=1}^\ell \left(\frac{u_l}{v_l}\right)^\chi \prod_{h=1}^k\bar{u}_h^{-\din_h}.
	\end{split}
}
In \eqref{for:finiteedges:7} we can just rename the constants   $c_1(m,\delta, \ell) = c_1(m,\delta, \ell)(1-\varepsilon)^\ell$ and $c_2(m,\delta, \ell) = c_2(m,\delta, \ell)(1+\varepsilon)^\ell$.
Since $\din_h$ is the multiplicity of vertex $\bar{u}_h$ as receiving vertex, we can write
$$
	\prod_{h=1}^k\bar{u}_h^{-\din_h} = \prod_{l=1}^\ell u_l^{-1}.
$$
Combining this with \eqref{for:finiteedges:7} completes the proof of \eqref{for:lem:finiteedges}.

The proof of~\eqref{for:lem:finiteedges:3} follows immediately from \eqref{for:lem:finiteedges} and the definition of the set $J(\sub{u}_\ell, \sub{v}_\ell)$ in \eqref{for:lem:finiteedges:2}. In fact, we can write
$$
	\E[|J(\sub{u}_\ell, \sub{v}_\ell)|] = \sum_{\sub{j}_\ell\in[m]^\ell}\pr\left(M_\ell(\sub{u}_\ell,\sub{v}_\ell,\sub{j}_\ell)\subseteq E(\PA_t)\right).
$$
Recall that $\pr\left(M_\ell(\sub{u}_\ell,\sub{v}_\ell,\sub{j}_\ell)\subseteq E(\PA_t)\right)$ is {\em independent of the labels} $\sub{j}_\ell$.
For a fixed set of source and target vertices $\sub{u}_\ell$ and $\sub{v}_\ell$, there is only a finite combination of labels $\sub{j}_\ell$ such that the subgraph defined by $M_\ell(\sub{u}_\ell,\sub{v}_\ell,\sub{j}_\ell)$ is attainable in the sense of Definition \ref{def:attainable}. 
In fact, the number of such labels $\sub{j}_\ell$ is larger than one (since the corresponding subgraph is attainable), and less than $m^\ell$ (the total number of elements of $[m]^\ell$). 
As a consequence, taking $\hat{c}_1 = c_1$ and $\hat{c}_2 = c_2m^\ell$ proves~\eqref{for:lem:finiteedges:3}.
\qed

\section{Proof of Theorem~\ref{th:subgraph:expectation}}\label{sec:proofs}
To prove Theorem~\ref{th:subgraph:expectation}, we write the expected number of subgraphs as multiple integrals. W.l.o.g.\ we assume throughout this section that $\pi$ is the identity permutation, so that the vertices of $H$ are labeled as $1,\dots,k$, and therefore drop the dependence of the quantities on $\pi$. We first prove a lemma that states that two integrals that will be important in proving Theorem~\ref{th:subgraph:expectation} are finite:
\begin{lemma}\label{lem:finiteint}
	Let $H$ be a subgraph such that the optimum of~\eqref{eq:opteq} is attained by $s_1,\dots,s_r$. Then,
	\begin{align}
	A_1(H)&:=\int_{1}^{\infty}u_1^{\beta(1)}\int_{u_1}^{\infty}u_2^{\beta(2)}\cdots \int_{u_{s-1}}^{\infty}u_{s_1}^{\beta(s_1)}\dd u_{s_1}\cdots \dd u_1<\infty,\\
	A_2(H)&:=\int_{0}^{1}u_{k}^{\beta({k})}\int_0^{u_{k}}u_{k-1}^{\beta({k-1})}\cdots \int_0^{u_{s_r+1}}u_{s_r+1}^{\beta(s_r+1)}\dd u_{s_r+1}\cdots \dd u_k<\infty.
	\end{align}
\end{lemma}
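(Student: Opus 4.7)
The plan is to integrate out the variables from the innermost outward and to track the constraint each integration imposes on the $\beta(i)$'s. For $A_1(H)$, the innermost integral
$$\int_{u_{s_1-1}}^{\infty}u_{s_1}^{\beta(s_1)}\,\dd u_{s_1}$$
converges iff $\beta(s_1)+1<0$, and when it does it produces a positive constant multiple of $u_{s_1-1}^{\beta(s_1)+1}$. A straightforward induction then shows that after $m$ rounds of integration the next integrand, viewed as a function of $u_{s_1-m}$, is a positive constant times $u_{s_1-m}^{\beta(s_1-m)+\sum_{i=s_1-m+1}^{s_1}(\beta(i)+1)}$, so that the successive integrations over $(u_{s_1-m-1},\infty)$ all converge iff the strict inequalities
\begin{equation}
\sum_{i=j}^{s_1}\bigl(\beta(i)+1\bigr)<0\qquad\text{for all }j\in\{1,\dots,s_1\}
\end{equation}
hold. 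Repeating the argument for $A_2(H)$, where each variable is instead integrated over $[0,u_{\text{next}}]$ and hence convergence at $0$ requires the running exponent to be strictly larger than $-1$, yields the mirror requirement
\begin{equation}
\sum_{i=s_r+1}^{j}\bigl(\beta(i)+1\bigr)>0\qquad\text{for all }j\in\{s_r+1,\dots,k\}.
\end{equation}

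The second step is to rewrite these partial sums as differences of the objective $f(s):=-s+\sum_{i=s+1}^{k}\beta(i)$ of~\eqref{eq:opteq}. A short index shift gives
\begin{equation}
f(j-1)-f(s_1)=\sum_{i=j}^{s_1}\bigl(\beta(i)+1\bigr),\qquad f(j)-f(s_r)=-\sum_{i=s_r+1}^{j}\bigl(\beta(i)+1\bigr),
\end{equation}
so the two families of inequalities above become $f(j-1)<f(s_1)$ for every $j\in[1,s_1]$ and $f(j)<f(s_r)$ for every $j\in[s_r+1,k]$, i.e.\ strict sub-optimality of every argument lying strictly below $s_1$ or strictly above $s_r$.

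Finally, these strict sub-optimalities are immediate from the hypothesis: since $s_1<\cdots<s_r$ is, by the natural convention, the \emph{complete} list of maximizers of~\eqref{eq:opteq}, any $s\in\{0,\dots,s_1-1\}$ achieving the maximum would contradict minimality of $s_1$, and any $s\in\{s_r+1,\dots,k\}$ achieving it would contradict maximality of $s_r$. Both desired families of strict inequalities follow, which yields the finiteness of $A_1(H)$ and $A_2(H)$. The argument is essentially bookkeeping and there is no genuine obstacle; the only point requiring attention is to ensure that the positive constants generated at each integration step are finite and nonzero, which is automatic once the strict inequalities above have been established (the relevant exponents are bounded away from $-1$).
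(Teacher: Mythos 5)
Your proposal is correct and follows essentially the same route as the paper: reduce finiteness of each iterated integral to strict negativity (resp.\ positivity) of the running exponents, identify those partial sums with differences of the objective $-s+\sum_{i>s}\beta(i)$ at candidate optimizers, and conclude from $s_1$ being the smallest and $s_r$ the largest maximizer. Your version is, if anything, slightly more explicit about the induction over the nested integrals and fixes a couple of harmless index slips present in the paper's own computation.
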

\begin{proof}
	The first integral is finite as long as
	\begin{equation}\label{eq:intfinite1}
	z+\sum_{i=s_1-z}^{s_1}\beta(i)<0
	\end{equation}
	for all $z\in[s_1]$. Suppose that~\eqref{eq:intfinite1} does not hold for some $z^*\in[s_1]$. Then, the difference between the contribution to~\eqref{eq:opteq} for $\tilde{s}=s_1-z^*$ and $s_1$ is
	\begin{equation}
	-(s_1-z^*)+\sum_{i=s_1-z^*}^{k}\beta(i)+s_1-\sum_{i=s_1}^k \beta(i)= z^*+\sum_{i=s_1-z^*}^{s_1}\beta(i)\geq 0,
	\end{equation}
	which would imply that $s_1-z^*$ is also an optimizer of~\eqref{eq:opteq}, which is in contradiction with $s_1$ being the smallest optimum. Thus,~\eqref{eq:intfinite1} holds for all $r\in[s]$ and $A_1(H)<\infty$. 
	
	The second integral is finite as long as
	\begin{equation}
	z-s_r+\sum_{i=s_r+1}^{z}\beta(i)>0
	\end{equation}
	for all $z\in\{s_r+1,\dots,k\}$. Suppose that this does not hold for some $z^*\in\{s_r+1,\dots,k\}$. Set $\tilde{s}=z^*>s_r$. Then, the difference between the contribution to~\eqref{eq:opteq} for $\tilde{s}=z^*$ and $s_r$ is
	\begin{equation}
	-z^*+s_r-\sum_{i=s_r+1}^{z^*}\beta(i)\geq 0,
	\end{equation}
	which is a contradiction with $s_r$ being the largest optimizer. Therefore, $A_2(H)<\infty$.
\end{proof}

We now use this lemma to prove Theorem~\ref{th:subgraph:expectation}:
\begin{proof}[Proof of Theorem~\ref{th:subgraph:expectation}]
	Again, we assume that $\pi$ is the identity mapping, so that we may drop all dependencies on $\pi$. 
	Suppose the optimal solution to~\eqref{eq:opteq} is attained by $s_1,s_2,\dots,s_r$ for some $r\geq1$. Let the $\ell$ edges of $H$ be denoted by $(u_l,v_l)$ for $l\in[\ell]$. Let $N_t(H,i_1,\ldots,i_k)$ denote the number of times subgraph $H$ is present on vertices $i_1,\dots,i_k$. 
	We then use Lemma~\ref{lem:finiteedges}, which proves that, for some $0<C<\infty$, 
	\begin{equation}\label{eq:expnh}
	\begin{aligned}
		\Exp{N_t(H)} &= \sum_{i_1<\cdots<i_k\in[t]}\Exp{N_t(H,i_1,\dots,i_k)}\\
		& \leq C\sum_{i_1<\cdots <i_k\in[t]}\prod_{l=1}^\ell i_{u_l}^{\chi-1}i_{v_l}^{-\chi} = C\sum_{i_1<\cdots<i_k\in[t]}\prod_{q=1}^ki_q^{\beta(q)}.
	\end{aligned}
	\end{equation}
	
	We then bound the sums by integrals as
	\begin{equation}\label{eq:ENHub}
	\begin{aligned}[b]
	\Exp{N_t(H)}& \leq \tilde{C} \int_{1}^{t}u_1^{\beta(1)}\dots \int_{u_{k-1}}^{t}u_k^{\beta(k)}\dd u_k\cdots \dd u_1\\
	& \leq \tilde{C}  \int_{1}^{\infty}u_1^{\beta(1)}\dots \int_{u_s-1}^{\infty}u_{s_1}^{\beta(s_1)}\dd u_{s_1}\dots \dd u_{1}\\
	& \quad \times \int_{1}^{t}u_{s_1+1}^{\beta(s_1+1)}\int_{u_{s_1+1}}^{\infty}u_{s_1+2}^{\beta(s_1+2)}\cdots \int_{u_{s_2-1}}^{t}u_{s_2}^{\beta(s_2)}\dd u_{s_2}\dots \dd u_{s_1+1} \\
	& \quad \times \int_{1}^{t}u_{s_2+1}^{\beta(s_2+1)}\int_{u_{s_2+1}}^{\infty}u_{s_2+2}^{\beta(s_2+2)}\cdots \int_{u_{s_3-1}}^{t}u_{s_3}^{\beta(s_3)}\dd u_{s_3}\dots \dd u_{s_2+1}\times  \cdots\\
	& \quad \times \int_{1}^{t}u_{s_{r-1}+1}^{\beta(s_{r-1}+1)}\int_{u_{s_{r-1}+1}}^{\infty}u_{s_{r-1}+2}^{\beta(s_{r-1}+2)}\cdots \int_{u_{s_r-1}}^{t}u_{s_r}^{\beta(s_r)}\dd u_{s_r}\dots \dd u_{s_{r-1}+1} \\
	& \quad \times \int_{0}^{t}u_{s_r+1}^{\beta(s_r+1)}\int_{s_r+1}^{t}u_{s_r+2}^{\beta(s_r+2)}\cdots \int_{u_{k-1}}^{t}u_k^{\beta(k)}\dd u_k\cdots \dd u_{s_r+1},
	\end{aligned}
	\end{equation}
	for some $0<\tilde{C}<\infty$. 
	The first set of integrals is finite by Lemma~\ref{lem:finiteint} and independent of $t$. For the last set of integrals, we obtain
	\begin{equation}\label{eq:ENHub1}
	\begin{aligned}[b]
	&\int_{0}^{t}u_{s_r+1}^{\beta({s_r+1})}\int_{u_{s_r+1}}^{t}u_{s_r+2}^{\beta({s+2})}\cdots \int_{u_{k-1}}^{t}u_k^{\beta(k)}\dd u_k\cdots \dd u_{s_r+1}\\
	&=	t^{k-s_r+\sum_{i=s_r+1}^{k}\beta(i)}\int_{0}^{1}w_{s_r+1}^{\beta({s_r+1})}\int_{w_{s_r+1}}^{1}w_{s_r+2}^{\beta({s_r+2})}\cdots \int_{w_{k-1}}^{1}w_k^{\beta(k)}\dd w_k\cdots \dd w_{s_r+1}\\
	& = Kt^{k+B(H)},
	\end{aligned}
	\end{equation}
	for some $0<K<\infty$, where we have used the change of variables $w=u/t$ and Lemma~\ref{lem:finiteint}. For $r=1$, this finishes the proof, because then the middle integrals in~\eqref{eq:ENHub} are empty. We now investigate the behavior of the middle sets of integrals for $r>1$. 
	Because the optimum to~\eqref{eq:opteq} is attained for $s_1$ as well as $s_2$,
	\begin{equation}\label{eq:s1s2}
	-s_1+\sum_{i=s_1+1}^{k}\beta(i)+s_2-\sum_{i=s_2+1}^{k}\beta(i)=s_2-s_1+\sum_{i=s_1+1}^{s_2}\beta(i)=0.
	\end{equation}
	Therefore, when $s_2=s_1+1$, the second set of integrals in~\eqref{eq:ENHub} equals
	\begin{equation}
	\int_{1}^{t}u_{s_1}^{-1}\dd u_{s_1}=\log (t).
	\end{equation}
	Now suppose that $s_1<s_2+1$. Then, any $\tilde{s}\in[s_1+1,s_2-1]$ is a non-optimal solution to~\eqref{eq:opteq}, and therefore
	\begin{equation}
	-s_2+\sum_{i=s_2+1}^{k}\beta(i)+\tilde{s}-\sum_{i=\tilde{s}+1}^{k}\beta(i)=\tilde{s}-s_2-\sum_{i=\tilde{s}+1}^{s_2}\beta(i)>0,
	\end{equation}
	or
	\begin{equation}\label{eq:sbetween}
	\sum_{i=\tilde{s}+1}^{s_2}\beta(i) <s_2-\tilde{s}.
	\end{equation}
	This implies that
	\begin{equation}\label{eq:ENHubmid}
	\begin{aligned}[b]
	&\int_{1}^{t}u_{s_1+1}^{\beta(s_1+1)}\int_{u_{s_1+1}}^{\infty}u_{s_1+2}^{\beta(s_1+2)}\cdots \int_{u_{s_2-1}}^{t}u_{s_2}^{\beta(s_2)}\dd u_{s_2}\ldots \dd u_{s_1+1}\\
	&\quad  = \tilde{K}\int_{1}^{t}u_{s_1+1}^{\sum_{i=s_1+1}^{s_2}\beta(i)+s_2-s_1-1}\dd u_{s_1+1}\\
	& \quad= \tilde{K}\int_{1}^{t}u_{s_1+1}^{-1}\dd u_{s_1+1}=\tilde{K}\log (t),
	\end{aligned}
	\end{equation}
	for some $0<C<\infty$. A similar reasoning holds for the other integrals, so that combining~\eqref{eq:ENHub},~\eqref{eq:ENHub1} and~\eqref{eq:ENHubmid} yields
	\begin{equation}
	\lim_{t\to\infty}\frac{\Exp{N_t(H)}}{t^{k+B(H)}\log^{r-1}(t)}\leq C_2,
	\end{equation}
	for some $0<C_2<\infty$.
	
	We now proceed to prove a lower bound on the expected number of subgraphs. Again, by Lemma~\ref{lem:finiteedges} and lower bounding the sums by integrals as in~\eqref{eq:expnh}, we obtain that, for some $0<C<\infty$
	\begin{equation}
		\Exp{N_t(H)}\geq C \int_{1}^{t}u_1^{\beta(1)}\dots \int_{u_{k-1}}^{t}u_k^{\beta(k)}\dd u_k\cdots \dd u_1.
	\end{equation}
	 Fix $\varepsilon>0$. We investigate the contribution where vertices $1,\dots,s_1$ have index in $[1,1/\varepsilon]$, vertices $s_1+1,\dots,s_2$ have index in $[1/\varepsilon,\varepsilon t^{1/r}]$, vertices $s_2+1,\dots,s_3$ have index in $[t^{1/r},\varepsilon t^{2/r}]$ and so on, and vertices $s_r+1,\dots, s_k$ have index in $[\varepsilon t,t]$. Thus, we bound
	\eqan{\label{eq:LBnonunique}
	\Exp{N_t(H)}&  \geq C\int_{1}^{1/\varepsilon}u_1^{\beta(1)}\int_{u_1}^{1/\varepsilon}u_2^{\beta(2)}\cdots\int_{u_{s_1-1}}^{1/\varepsilon}u_{s_1}^{\beta(s)}\dd u_{s_1}\dots\dd u_1\nonumber \\
	& \quad \times \int_{1/\varepsilon}^{\varepsilon t^{1/r}}u_{s_1+1}^{\beta(s_1+1)}\int_{u_{s_1+1}}^{u_{s_1+1}/\varepsilon}u_{s_1+2}^{\beta(s_1+2)}\cdots \int_{u_{s_2-1}}^{u_{s_2-1}/\varepsilon}u_{s_2}^{\beta(s_2)}\dd u_{s_2}\dots\dd u_{s_1+1}\nonumber \\
	& \quad \times \int_{t^{1/r}}^{\varepsilon t^{2/r}}u_{s_2+1}^{\beta(s_2+1)}\int_{u_{s_2+1}}^{u_{s_2+1}/\varepsilon}u_{s_2+2}^{\beta(s_2+2)}\cdots \int_{u_{s_3-1}}^{u_{s_3-1}/\varepsilon}u_{s_3}^{\beta(s_3)}\dd u_{s_3}\dots\dd u_{s_2+1}\times\cdots \nonumber \\
	& \quad \times \int_{t^{(r-2)/r}}^{\varepsilon t^{(r-1)/r}}u_{s_{r-1}+1}^{\beta(s_{r-1}+1)}\int_{u_{s_{r-1}+1}}^{u_{s_{r-1}+1}/\varepsilon}u_{s_{r-1}+2}^{\beta(s_{r-1}+2)}\cdots \int_{u_{s_r-1}}^{u_{s_r-1}/\varepsilon}u_{s_r}^{\beta(s_r)}\dd u_{s_r}\dots\dd u_{s_{r-1}+1}\nonumber \\
	&\quad \times \int_{\varepsilon t}^{t}u_{s_r+1}^{\beta(s_r+1)}\int_{u_{s_r+1}}^{t}u_{s_r+2}^{\beta(s_r+2)}\cdots \int_{u_{k-1}}^{t}u_k^{\beta(k)}\dd u_k\dots\dd u_{s_r+1}
	}
	The first set of integrals equals $A_1(H)$ plus terms that vanishes as $\varepsilon$ becomes small by Lemma~\ref{lem:finiteint}. For the last set of integrals, we use the change of variables $w=u/t$ to obtain
	\begin{equation}\label{eq:LB1}
	\begin{aligned}[b]
	& \int_{\varepsilon t}^{t}u_{s_r+1}^{\beta({s_r+1})}\int_{u_{s_r+1}}^{t}u_{s_r+2}^{\beta({s_r+2})}\cdots \int_{u_{k-1}}^{t}u_k^{\beta(k)}\dd u_k\ldots \dd u_{s_r+1}\\
	& =	t^{k-s_r+\sum_{i=s_r+1}^{k}\beta(i)}\int_{\varepsilon }^{1}w_{s_r+1}^{\beta({s_r+1})}\int_{w_{s_r+1}}^{1}w_{s_r+2}^{\beta({s_r+2})}\cdots \int_{w_{k-1}}^{1}w_k^{\beta(k)}\dd w_k\ldots \dd w_{s_r+1}\\
	& = t^{k+B(H)}(A_2(H)-h_1(\varepsilon)),
	\end{aligned}
	\end{equation}
	for some function $h_1(\varepsilon)$. By Lemma~\ref{lem:finiteint} $h_1(\varepsilon)$ satisfies $\lim_{\varepsilon\to 0}h_1(\varepsilon)= 0$. 
	Again, if $r=1$, the middle sets of integrals in~\eqref{eq:LBnonunique} are empty, so we are done. 
	
	We now investigate the second set of integrals in~\eqref{eq:LBnonunique} for $r>1$. Using the substitution $w_{s_1+1}=u_{s_1+1}$ and $w_i=u_i/u_{i-1}$ for $i>s_1+1$, we obtain
	\eqan{\label{eq:LBmiddle}
	&\int_{1/\varepsilon}^{\varepsilon t^{1/r}}u_{s_1+1}^{\beta(s_1+1)}\int_{u_{s_1+1}}^{u_{s_1+1}/\varepsilon}u_{s_1+2}^{\beta(s_1+2)}\cdots \int_{u_{s_2-1}}^{u_{s_2-1}/\varepsilon}u_{s_2}^{\beta(s_2)}\dd u_{s_2}\dots\dd u_{s_1+1}\\
	& = \int_{1/\varepsilon}^{\varepsilon t^{1/r}}w_{s_1+1}^{s_2-s_1-1+\sum_{i=s_1+1}^{s_2}\beta(i)}\dd w_{s_1+1}\int_{1}^{1/\varepsilon}w_{s_1+2}^{s_2-s_1-2+\sum_{i=s_1+2}^{s_2}\beta(i)}\dd w_{s_2+1}\cdots \int_{1}^{1/\varepsilon}w_{s_2}^{\beta(s_2)}\dd w_{s_2}.
	\nonumber
	}
	The first integral equals by~\eqref{eq:s1s2}
	\begin{equation}
	\int_{1/\varepsilon}^{\varepsilon t^{1/r}}w_{s_1+1}^{-1}\dd w_{s_1+1} = \frac{1}{r}\log(t)+\log(\varepsilon^2).
	\end{equation} 
	The integrand in all other integrals in~\eqref{eq:LBmiddle} equals $w_i^{\gamma_i}$ for some $\gamma_i<-1$ by~\eqref{eq:sbetween}. Therefore, these integrals equal a constant plus a function of $\varepsilon$ that vanishes as $\varepsilon$ becomes small so that
	\begin{equation}\label{eq:LBmiddleend}
	\begin{aligned}[b]
	&\int_{1/\varepsilon}^{\varepsilon t^{1/r}}u_{s_1+1}^{\beta(s_1+1)}\int_{u_{s_1+1}}^{u_{s_1+1}/\varepsilon}u_{s_1+2}^{\beta(s_1+2)}\cdots \int_{u_{s_2-1}}^{u_{s_2-1}/\varepsilon}u_{s_2}^{\beta(s_2)}\dd u_{s_2}\dots\dd u_{s_1+1}\\
	& = \left(\frac{1}{r}\log(t)+\log(\varepsilon^2)\right)\left(K+h_2(\varepsilon)\right),
	\end{aligned}
	\end{equation}
	for some $0<K<\infty$ and some $h_2(\varepsilon)$ such that $\lim_{\varepsilon\to 0}h_2(\varepsilon)=0$. The other integrals in~\eqref{eq:LBnonunique} can be estimated similarly. 
	
	Combining~\eqref{eq:LBnonunique},~\eqref{eq:LB1} and~\eqref{eq:LBmiddleend} we obtain
	\begin{equation}
	\lim_{t\to\infty}\frac{\Exp{N_t(H)}}{t^{k+B(H)}\log^{r-1}(t)}\geq C_1+h(\varepsilon),
	\end{equation}
	for some constant $0<C_1<\infty$ and some function $h(\varepsilon)$ such that $\lim_{\varepsilon\to 0}h(\varepsilon)=0$. Taking the limit for $\varepsilon\to 0$ then proves the theorem.
\end{proof}

\section{Proof of Theorem \ref{th:numberT}}
\label{sec:numberT:proof}
Fix $m\geq 2$ and $\delta>-m$. The first step of the proof consists of showing that
\eqn{
\label{for:Texpected}
\begin{split}
	\E[\triangle_t]  = 
	& \frac{\tau-2}{\tau-1}\frac{m^2(m-1)(m+\delta)(m+\delta+1)}{(2m+\delta)^2} \\
	& \times \sum_{u=1}^{t-2}[(u-(2m)/(2m+\delta))(u-(2m-1)/(2m+\delta))]^{-1}
			\\
			&\times\frac{\Gamma(u+2-(2m)/(2m+\delta)}{\Gamma(u+2-(3m+\delta)/(2m+\delta))}
				\frac{\Gamma(u+2-(2m-1)/(2m+\delta))}{\Gamma(u+2-(3m+\delta-1)/(2m+\delta))}\\
	& \times \sum_{v=u+1}^{t-1}(v-(3m+\delta-1)/(2m+\delta))^{-1}\\
	& \times \sum_{w=v+1}^{t}\frac{\Gamma(w-(3m+\delta)/(2m+\delta))}{\Gamma(w-(2m)/(2m+\delta))}
						\frac{\Gamma(w-(3m+\delta-1)/(2m+\delta))}{\Gamma(w-(2m-1)/(2m+\delta))}.
\end{split}
}
 We can write
\eqn{
\label{for:labeltriang}
	\triangle_t := \sum_{u=1}^{t-2}\sum_{v=u+1}^{t-1}\sum_{w=v+1}^t \sum_{j_1\in[m]}\sum_{j_2,j_3\in[m]}\I\{u\stackrel{j_1}{\leftarrow} v,\ u\stackrel{j_2}{\leftarrow}w, \ v\stackrel{j_3}{\leftarrow} w\}.
}
Since there are $m^2(m-1)$ possible choices for the edges $j_1,j_2,j_3$, 
\eqn{
\label{for:Texpected:1}
	\E[\triangle_t] = m^2(m-1)\sum_{u=1}^{t-2}\sum_{v=u+1}^{t-1}\sum_{w=v+1}^t
		\E\left[\psi_u\frac{S_u}{S_{v-1}}\psi_u\frac{S_u}{S_{w-1}}\psi_v\frac{S_v}{S_{w-1}}\right].
}
Recalling \eqref{for:edgeUrnP}, we can write every term in the sum in \eqref{for:Texpected:1} as
\eqn{
\label{for:Texpected:rem}
	\E\bigg[\bigg(\psi_u\prod_{h=u+1}^{v-1}(1-\psi_h)\bigg)\bigg(\psi_u\prod_{k=u+1}^{w-1}(1-\psi_k)\bigg)\bigg(\psi_v\prod_{l=v+1}^{w-1}(1-\psi_l)\bigg)\bigg].	
}
Since the random variables $\psi_1,\ldots, \psi_t$ are independent, we can factorize the expectation to obtain
\eqn{
\label{for:Texpected:2}
	\E[\psi_u^2]\E[\psi_v(1-\psi_v)] \prod_{k=u+1,k\neq v}^{w-1}\E[(1-\psi_k)^2] =\E[\psi_u^2]\frac{\E[\psi_v(1-\psi_v)]}{\E[(1-\psi_v)^2]}  \prod_{k=u+1}^{w-1}\E[(1-\psi_k)^2] .
}
Recall that, for a Beta random variable $X(\alpha,\beta)$, we have
\eqn{
\label{for:Texpected:2:extra}
	\begin{split}
		\E[X] & = \frac{\alpha}{\alpha+\beta},\\
		\E[X(1-X)] & = \frac{\alpha\beta}{(\alpha+\beta)(\alpha+\beta+1)},\\
		\E[X^2] & = \frac{\alpha(\alpha+1)}{(\alpha+\beta)(\alpha+\beta+1)},
	\end{split}
}
and $1-X(\alpha,\beta)$ is distributed as $X(\beta,\alpha)$. Using \eqref{for:Texpected:2:extra}, we can rewrite \eqref{for:Texpected:2} in terms of the parameters of $\psi_1,\ldots, \psi_t$. 
Since $\psi_k$ has parameters $\alpha = m+\delta$ and $\beta= \beta_k = m(2k-3)+(k-1)\delta$, the first term in \eqref{for:Texpected:2} can be written as
\eqn{
\label{for:Texpected:3}
\begin{split}
		\E[\psi_u^2]  & \frac{(m+\delta)(m+\delta+1)}{(m(2u-2)+u\delta)(m(2u-2)+u\delta+1)}
		\\ & = \frac{(m+\delta)(m+\delta+1)}{(2m+\delta)^2}\bigg[(u-2m/2m+\delta)(u-(2m-1)/(2m+\delta))\bigg]^{-1}.
\end{split}
}
For the second term, we have 
\eqn{
\label{for:Texpected:4}
	\frac{\E[\psi_v(1-\psi_v)]}{\E[(1-\psi_v)^2]}  =\frac{m+\delta}{m(2v-3)+(v-1)\delta} =  \frac{\tau-2}{\tau-1} (v-(3m+\delta-1)/(2m+\delta))^{-1}.
}
The last product in \eqref{for:Texpected:2}, for $k=u+1,\ldots,w-1$ results in
\eqn{
\label{for:Texpected:4:1}
\begin{split}
	\E[(1-\psi_k)^2] & =\frac{(m(2k-3)+(k-1)\delta) (m(2k-3)+(k-1)\delta+1)}{(m(2k-2)+k\delta) (m(2k-2)+k\delta+1)}\\
	& =   \frac{k-(3m+\delta)/(2m+\delta)}{k-2m/(2m+\delta)}\frac{k-(3m+\delta-1)/(2m+\delta)}{k-(2m-1)/(2m+\delta)}.
\end{split}
}
Using the recursive property $\Gamma(a+1) = a\Gamma(a)$ of the Gamma function, 
\eqan{
\label{for:Texpected:5}
	\prod_{k=u+1}^{w-1}\E[(1-\psi_k)^2] = & 
	\frac{\Gamma(u+2-(2m)/(2m+\delta)}{\Gamma(u+2-(3m+\delta)/(2m+\delta))}
				\frac{\Gamma(u+2-(2m-1)/(2m+\delta))}{\Gamma(u+2-(3m+\delta-1)/(2m+\delta))}\nonumber \\
	& \times \frac{\Gamma(w-(3m+\delta)/(2m+\delta))}{\Gamma(w-(2m)/(2m+\delta))}
						\frac{\Gamma(w-(3m+\delta-1)/(2m+\delta))}{\Gamma(w-(2m-1)/(2m+\delta))}.
}
Equation \eqref{for:Texpected:1} follows by combining~\eqref{for:Texpected:2}, \eqref{for:Texpected:3}, \eqref{for:Texpected:4},  \eqref{for:Texpected:4:1} and \eqref{for:Texpected:5}.

The last step of the proof is to evaluate the sum in \eqref{for:Texpected:1}, and combining the result with the multiplicative constant in front in \eqref{for:Texpected:1}. By Stirling's formula
$$
	\frac{\Gamma(x+a)}{\Gamma(x+b)} = x^{a-b}(1+O(1/x)).
$$
As a consequence, recalling that $\chi = (m+\delta)/(2m+\delta)$, the sum in \eqref{for:Texpected:1} can be written as
\eqn{
\label{for:Texpected:6}
	\sum_{u=1}^{t-2}u^{2\chi-2}(1+O(1/u))\sum_{v=u+1}^{t-1}v^{-1}(1+O(1/v))\sum_{w=v+1}^t w^{-2\chi}(1+O(1/w)).
}
We can approximate the sum in \eqref{for:Texpected:6} with the corresponding integral using Euler-Maclaurin formula, thus obtaining
\eqn{
\label{for:Texpected:7}
	\int_1^t u^{2\chi-2}du\int_u^t v^{-1}dv\int_v^t w^{-2\chi}dw.
}
As $t\rightarrow\infty$, the order of magnitude of the integral in \eqref{for:Texpected:7} is predicted by Theorem~\ref{th:subgraph:expectation}. If we evaluate the integral, then we obtain that the coefficient of the dominant term in \eqref{for:Texpected:7} is $(2m+\delta)^2/\delta^2$ for $\tau>2, \tau\neq 3$, and 
$1/6$ for $\tau=3$. 

Putting together these coefficients with the constant in front of the sum in \eqref{for:Texpected} completes the proof of  Theorem \ref{th:numberT}. \qed

\begin{Remark}[Constant for general subgraphs]
\label{rem:subgraphs:constant}
{\em 
In the proof of Theorem \ref{th:numberT}, the hardest step is to prove  \eqref{for:labeltriang}, i.e., to find the expectation of the indicator functions in \eqref{for:Texpected}.
This is the reason why for a general ordered subgraph $(H,\pi)$ on $k$ vertices it is hard to find the explicit constant as in \eqref{for:expect:subgraph:constant}. In fact, as we have done to move from \eqref{for:Texpected:1} to \eqref{for:Texpected:rem}, it is necessary to identify precisely, for every $v\in[t]$, how many times the terms $\psi_v$ and $(1-\psi_v)$ appear in the product inside the expectations in \eqref{for:Texpected:1}. This makes the evaluation of such terms complicated.

Typically, as it shown in \eqref{for:Texpected:2}, \eqref{for:Texpected:3}, \eqref{for:Texpected:4},  \eqref{for:Texpected:4:1} and \eqref{for:Texpected:5}, the product of the constants obtained by evaluating the probability of an ordered subgraph $(H,\pi)$ being present can be written as ratios of Gamma functions. The same constants can be found using the martingale approach as in  \cite{Bol01,Szym05} and \cite[Section 8.3]{hofstad2009}, even though in this case constants are obtained through a recursive use of conditional expectation. 

We remark that our method and the martingale method are equivalent. We focused on the P\'olya urn interpretation of the graph since it highlights the dependence of the presence of edges on the {\em age} of vertices, that is directly related to the order of magnitude of degrees. 
}
\end{Remark}

\section{Conditional concentration: proof of Proposition~\ref{prop:criterion:conditional:subgraph}}
\label{sec:fluct}
In the previous sections, we have considered the order of magnitude of the expectation of the number of occurrences of ordered subgraphs in PAM. In other words, for an ordered subgraph $(H,\psi)$ we are able to identify the order of magnitude $f(t)$ of the expected number of occurrences $N_t(H,\pi)$, so that $\E[N_t(H,\pi)] = \bigO{f(t)}$.
We now show how these orders of magnitude of the expected number of subgraphs determines the conditional convergence given in~\eqref{for:subgraph:convergence1}.

\subsection{Bound with overlapping subgraphs}
The P\'olya urn graph in Definition~\ref{def:urngraph} consists of a function of uniform random variables $(U_{v,j})_{v\in[t]}^{j\in[m]}$ and an independent sequence of Beta random variables $(\psi_v)_{v\in[t]}$. 
We can interpret the sequence $(\psi_v)_{v\in[t]}$ as a sequence of {\em intensities} associated to the vertices, where a higher intensity corresponds to a higher probability of receiving a connection. The sequence $(U_{v,j})_{v\in[t]}^{j\in[m]}$ determines the attachment of edges. In particular, conditionally on the sequence $(\psi_v)_{v\in[t]}$, every edge is present {\em independently} (but with different probabilities). 

For $t\in\N$, denote $\pr_{\sub{\psi}_t}(\cdot) = \pr(\ \cdot\  | \psi_1,\ldots,\psi_t)$, and similarly $\E_{\sub{\psi}_t}[\cdot] = \E[\ \cdot \ | \psi_1,\ldots,\psi_t]$. Furthermore, let $N_{t,\psi}(H,\pi)$ denote the number of times subgraph $(H,\pi)$ appears conditionally on the $\psi$-variables. We now apply a conditional second moment method to $N_{t,\psi}(H,\pi)$. 
We use the notation introduced in Section~\ref{sec:psubgraph}, so that every possible realization of $H$ in PAM corresponds to a finite set of edges 
$M_\ell(\sub{u}_\ell,\sub{v}_\ell,\sub{j}_\ell)$, where $\ell$ is the number of edges in $H$ such that $v_h\stackrel{j_h}{\rightarrow}u_h$, i.e., $u_h$ is the receiving vertex, and $j_h$ is the label of the edge. For simplicity, we denote the set $M_\ell(\sub{u}_\ell,\sub{v}_\ell,\sub{j}_\ell)$ by $M$. For ease of notation, we assume that $\pi$ is the identity map and drop the dependence on $\pi$. We prove the following results:

\begin{Lemma}[Bound on conditional variance]
\label{lem:bounvariance:subgraph}
	Consider subgraph $H$. Then, $\pr$-a.s., 
	\eqn{\nonumber
		\mathrm{Var}_{\sub{\psi}_t}(N_t(H))  \leq \E_{\sub{\psi}_t}[N_t(H)]+ \sum_{\hat{H}\in\hat{\mathcal{H}}}\E_{\sub{\psi}_t}[N_t(\hat{H})],
	}
	where $\hat{\mathcal{H}}$ denotes the set of all possible attainable subgraphs $\hat{H}$ that are obtained by merging two copies of $H$ such that they share at least one edge. 
\end{Lemma}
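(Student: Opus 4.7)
The plan is to apply a standard conditional second-moment expansion, exploiting the crucial feature of the P\'olya urn graph formulation: once we condition on $\psi_1,\dots,\psi_t$, the indicator variables associated to distinct edges become independent.

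First I would write the number of subgraphs as
\begin{equation*}
N_t(H) = \sum_{M} \I\{M\subseteq E(\PA_t)\},
\end{equation*}
where the sum ranges over all finite labeled edge sets $M = M_\ell(\sub{u}_\ell,\sub{v}_\ell,\sub{j}_\ell)$ that define an attainable copy of $H$. Expanding the conditional variance as a double sum of covariances gives
\begin{equation*}
\mathrm{Var}_{\sub{\psi}_t}(N_t(H)) = \sum_{M} \mathrm{Var}_{\sub{\psi}_t}\!\big(\I\{M\subseteq E(\PA_t)\}\big) + \sum_{M\neq M'} \mathrm{Cov}_{\sub{\psi}_t}\!\big(\I\{M\subseteq E(\PA_t)\},\I\{M'\subseteq E(\PA_t)\}\big).
\end{equation*}

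The key observation is that, conditionally on $\sub{\psi}_t$, the events $\{M\subseteq E(\PA_t)\}$ and $\{M'\subseteq E(\PA_t)\}$ depend only on the independent uniform variables $(U_{v,j})$ governing the edge attachments from Definition~\ref{def:urngraph}. Hence, whenever $M\cap M' = \emptyset$, the two indicators depend on disjoint independent families of uniforms and their conditional covariance vanishes. For the diagonal terms, I would use the crude bound $\mathrm{Var}_{\sub{\psi}_t}(\I\{\cdot\})\leq \E_{\sub{\psi}_t}[\I\{\cdot\}]$, whose sum is exactly $\E_{\sub{\psi}_t}[N_t(H)]$.

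It remains to control the off-diagonal terms with $M\cap M'\neq\emptyset$. Here I would bound each covariance by the second moment, which (being an indicator) equals $\E_{\sub{\psi}_t}\!\big[\I\{M\cup M'\subseteq E(\PA_t)\}\big]$. The union $M\cup M'$ is exactly the edge set of a subgraph obtained by gluing two copies of $H$ that share at least one edge, i.e.\ a member of the family $\hat{\mathcal{H}}$. Summing over all pairs $(M,M')$ with $M\cap M'\neq\emptyset$ and reorganising according to which $\hat{H}\in\hat{\mathcal{H}}$ is formed, the sum is bounded by $\sum_{\hat{H}\in\hat{\mathcal{H}}}\E_{\sub{\psi}_t}[N_t(\hat{H})]$. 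Combining the two contributions yields the claimed bound.

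The main obstacle I expect is purely bookkeeping: one has to be careful that when enumerating pairs $(M,M')$ with prescribed overlap, each resulting merged subgraph $\hat{H}$ is counted with the correct multiplicity so that the inequality is not broken by relabelings of edges or vertices. The inclusion $\hat{H}\in\hat{\mathcal{H}}$ is attainable because $M$ and $M'$ are individually attainable, and attainability is preserved under unions of edge sets consistent with the vertex ordering. No further analytic estimates are needed; the conditional independence of edges, which is not available in the original formulation~\eqref{for:probtriangle} but is the fundamental advantage of the P\'olya urn representation, does all the work.
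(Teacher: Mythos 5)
Your argument is essentially the paper's proof: expand the conditional second moment over pairs of labeled edge sets $(M,M')$, use conditional independence to kill non-interacting pairs, bound the diagonal by $\E_{\sub{\psi}_t}[N_t(H)]$, and bound the genuinely overlapping pairs by the conditional probability that the merged configuration $M\cup M'$ is present, which reassembles into $\sum_{\hat{H}\in\hat{\mathcal{H}}}\E_{\sub{\psi}_t}[N_t(\hat{H})]$.

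One justification in your write-up is wrong, although the inequality survives. A labeled edge is a triple $(u,v,j)$ (target, source, edge label), so two edge sets with $M\cap M'=\emptyset$ can still both involve the \emph{same} uniform variable $U_{v,j}$: take $(u,v,j)\in M$ and $(u',v,j)\in M'$ with $u\neq u'$. These two indicators are not conditionally independent given $\sub{\psi}_t$ --- the events $\{U_{v,j}\in I_u\}$ and $\{U_{v,j}\in I_{u'}\}$ are mutually exclusive --- so the covariance does not vanish; it equals $-\pr_{\sub{\psi}_t}\left(M\subseteq E(\PA_t)\right)\pr_{\sub{\psi}_t}\left(M'\subseteq E(\PA_t)\right)<0$. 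The paper isolates exactly this case ($M\cup M'$ not attainable because the same edge slot must attach to two different vertices) and discards it because its contribution to the variance is negative. Your bound therefore still holds, but the step as written (``disjoint labeled edge sets imply independent indicators'') should be corrected: conditional independence requires that $M$ and $M'$ use disjoint sets of edge \emph{slots} $(v,j)$, not merely disjoint labeled edges, and the residual conflicting pairs must be dropped because their covariance is nonpositive rather than zero.
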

Lemma~\ref{lem:bounvariance:subgraph} gives a bound on the conditional variance in terms of the conditional probabilities of observing two overlapping of the subgraph $H$ at the same time. 
Notice that we require these copies to overlap at at least one edge, which is different than requiring that they are disjoint (the can share one or more vertices but no edges). 

\begin{proof}[Proof of Lemma~\ref{lem:bounvariance:subgraph}]
We prove the bound in Lemma~\ref{lem:bounvariance:subgraph} by evaluating the conditional second moment of $N_t(H)$ as
\eqan{
	\E_{\sub{\psi}_t}[N_t(H)^2]  & = \E_{\sub{\psi}_t}\Big[\sum_{M,M'}\I_{\{M\subseteq E(\PA_t)\}}\I_{\{M'\subseteq E(\PA_t)\}}\Big] \nonumber \\
	 & = \sum_{M,M'}\pr_{\sub{\psi}_t}\Big(M\subseteq E(\PA_t),\ M'\subseteq E(\PA_t)\Big)\nonumber ,
}
where $M$ and $M'$ are two sets of edges corresponding to two possible realizations of the subgraph $H$. Notice that $M$ and $M'$ are not necessarily distinct. We then have to evaluate the conditional probability of having both the sets $M$ and $M'$ simultaneously present in the graph. 
As a consequence, we conditional variance in Lemma~\ref{lem:bounvariance:subgraph} can be written as
\eqn{
	\label{for:subg:variance:2}
	\sum_{M\neq M'}\pr_{\sub{\psi}_t}\Big(M\subseteq E(\PA_t),\ M'\subseteq E(\PA_t)\Big) -\pr_{\sub{\psi}_t}(M\subseteq E(\PA_t))\pr_{\sub{\psi}_t}(M'\subseteq E(\PA_t)).
}

We define
\eqan{
	\label{for:overlappingM}
	\mathcal{M}:= \Big\{ (M,M') \colon & \exists \ (u,v,j) \colon (u,v,j)\in M, \ (u,v,j)\in M', \nonumber \\
	& \ M\neq M', \ \mbox{$(M\cup M')$ defines an attainable subgraph}\Big\}.
}
We then consider two different cases, i.e., whether $(M,M')$ is in $\mathcal{M}$ or not. 
If $(M,M')\not \in \mathcal{M}$, then one of the three following situations occurs:
\begin{itemize}
	\item[{$\vartriangleright$}] $M\cup M'$ defines a subgraph that is not attainable (for instance, $M$ and $M'$ require that the same edge is attached to different vertices);
	\item[{$\vartriangleright$}] $M\cup M'$ defines a subgraph that is attainable, $M$ and $M'$ are disjoint sets of labeled edges (they are allowed to share vertices);
	\item[{$\vartriangleright$}] $M$ and $M'$ define the same attainable subgraph (so $M=M'$, thus labels of edges coincide). 
\end{itemize} 
When $M=M'$ we have that
\eqn{\nonumber
	\pr_{\sub{\psi}_t}\Big(M\subseteq E(\PA_t),\ M'\subseteq E(\PA_t)\Big) = \pr_{\sub{\psi}_t}(M\subseteq E(\PA_t)),
}
so that the corresponding contribution in the sum in \eqref{for:subg:variance:2} is 
\eqn{\nonumber 
	\pr_{\sub{\psi}_t}(M\subseteq E(\PA_t))-\pr_{\sub{\psi}_t}(M\subseteq E(\PA_t))^2\leq \pr_{\sub{\psi}_t}(M\subseteq E(\PA_t)),
}
and the sum over $M$ gives the term $\E_{\sub{\psi}_t}[N_t(H)]$ in the statement of Lemma~\ref{lem:bounvariance:subgraph}. 
When $M\neq M'$ and $M\cup M'$ is attainable and their sets of edges are disjoint  it follows directly from the independence of $(U_{v,j})_{v\in[t]}^{j\in[m]}$ and $(\psi_v)_{v\in[t]}$ that
\eqn{\nonumber
	\pr_{\sub{\psi}_t}\Big(M\subseteq E(\PA_t),\ M'\subseteq E(\PA_t)\Big) = \pr_{\sub{\psi}_t}(M\subseteq E(\PA_t))\pr_{\sub{\psi}_t}(M'\subseteq E(\PA_t)).
}
Thus, in this situation the corresponding contribution is zero. 
When $(M,M')$ is not attainable the corresponding contribution is negative. When $(M,M')\in \mathcal{M}$ we bound the corresponding terms in \eqref{for:subg:variance:2} by $\pr_{\sub{\psi}_t}\Big(M\subseteq E(\PA_t),\ M'\subseteq E(\PA_t)\Big)$, thus obtaining
\eqn{
	\mathrm{Var}_{\sub{\psi}_t}(N_t(H))  \leq \E_{\sub{\psi}_t}[N_t(H)]+\sum_{(M,M')\in \mathcal{M}} \pr_{\sub{\psi}_t}\Big(M\cup M' \subseteq E(\PA_t)\Big),
}
 We then rewrite this as
\eqn{
	\mathrm{Var}_{\sub{\psi}_t}(N_t(H,\pi))  \leq \E_{\sub{\psi}_t}[N_t(H)] + \sum_{\hat{H}\in\hat{\mathcal{H}}}\E_{\sub{\psi}_t}[N_t(\hat{H})],
}	
which proves the lemma.
\end{proof}

\subsection{Criterion for conditional convergence}
%

We now prove Proposition~\ref{prop:criterion:conditional:subgraph} using Lemma~\ref{lem:bounvariance:subgraph} and Lemma~\ref{lem:maxpsi}:
\begin{proof}[Proof of Proposition~\ref{prop:criterion:conditional:subgraph}]
It sufficient to show that for every fixed $\varepsilon>0$, 
\eqn{\nonumber 
	\pr\left(|N_{t,\psi}(H,\pi)-\E_t[N_t(H,\pi)]>\varepsilon \E[N_t(H,\pi)] \right) = o(1). 
}
We now apply Lemma~\ref{lem:bounvariance:subgraph}, which yields
\eqn{\nonumber 
\begin{split}
	 \pr\Big(|N_{t,\psi}(H,\pi)-\E_{\sub{\psi}_t}&[N_t(H,\pi)] >\varepsilon \E[N_t(H,\pi)] \Big)  \\
	&  \leq \frac{1}{\varepsilon^2\E[N_t(H,\pi)]^2} \E\big[\mathrm{Var}_{\sub{\psi}_t}(N_t(H,\pi))\big]  \\
	& \leq \frac{\E\Big[\E_{\sub{\psi}_t}[N_t(H,\pi)] + \sum_{\hat{H}\in\hat{\mathcal{H}}}\E_{\sub{\psi}_t}[N_t(\hat{H})]\Big]}{\varepsilon^2\E[N_t(H,\pi)]^2} \\
	&  = \frac{\E[N_t(H,\pi)]+ \E[N_t(\hat{H})]}{\varepsilon^2\E[N_t(H,\pi)]^2} = o(1).
\end{split} 
}
%
\end{proof}

As an example, we consider triangles. Theorem~\ref{th:numberT} identifies the expected number of triangles, and by Theorem~\ref{th:subgraph:expectation} we can show that $\E[\triangle_t^2] = \Theta(\E[\triangle_t]^2)$, so we are not able to apply the second moment method to $\triangle_t$. Figure~\ref{fig:triadfluct} suggests that $\triangle_t/\E[\triangle_t]$ converges to a limit that is not deterministic, i.e., in \eqref{for:subgraph:convergence2} the limiting $X$ is a random variable. 

However, we can prove that $\triangle_t$ is conditionally concentrated, as stated in Corollary~\ref{cor:conc:triang}. The proof of  Corollary~\ref{cor:conc:triang} follows directly from Proposition~\ref{prop:criterion:conditional:subgraph}, the fact that  $\E[\triangle_t] = \Theta(t^{(3-\tau)/(\tau-1)}\log(t))$ as given by Theorem~\ref{th:numberT}, and Figure~\ref{fig:graphlet4tr}, that contains the information on the subgraphs consisting of two triangles sharing one or two edges.

\subsection{Non-concentrated subgraphs}
We now show that for most $\psi$-sequences, the other direction in Proposition~\ref{prop:criterion:conditional:subgraph} also holds. That is, if there exists a subgraph composed of two merged copies of $H$ such that the condition in Proposition~\ref{prop:criterion:conditional:subgraph} does not hold, then for most $\psi$-sequences, $H$ is not conditionally concentrated. 
\begin{Proposition}\label{prop:condconc}
	Consider a subgraph $(H,\pi)$ such that $\Exp{N_t(H,\pi)}\to\infty$ as $t\to\infty$. Suppose that there exists a subgraph $\hat{H}$, composed of two distinct copies of $(H,\pi)$ with at least one edge in common such that $\Exp{N_t(\hat{H})}/\Exp{N_t(H,\pi)}\nrightarrow 0$ as $t\to\infty$. Then, for any $\varepsilon>0$, there exists $\eta>0$ such that
	\begin{equation}
		\Prob{\frac{\Varp{N_t(H,\pi)}}{\Exp{N_t(H,\pi)}^2}>\eta}\geq 1-\varepsilon.
	\end{equation}
\end{Proposition}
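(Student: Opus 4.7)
The plan is to isolate the ``bad'' overlapping merging $\hat{H}^*$ supplied by the hypothesis, lower bound $\Varp{N_t(H,\pi)}$ by a constant multiple of $\E_{\sub{\psi}_t}[N_t(\hat{H}^*)]$ on a high-probability event, and then apply a Paley--Zygmund estimate to the random variable $\E_{\sub{\psi}_t}[N_t(\hat{H}^*)]$ to ensure that it retains a fixed fraction of its mean. By the hypothesis, after passing to a subsequence there is a constant $c>0$ with $\E[N_t(\hat{H}^*)]\geq c\,\E[N_t(H,\pi)]^2$ for all large $t$.

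First, I would refine the proof of Lemma~\ref{lem:bounvariance:subgraph}. Conditionally on $\sub{\psi}_t=(\psi_1,\dots,\psi_t)$ the edges of $\PA_t$ are independent, so the exact identity
$$\Varp{N_t(H,\pi)}=D_t+\sum_{(M,M')\in \mathcal{M}}\pr_{\sub{\psi}_t}(M\cup M'\subseteq E(\PA_t))\Bigl[1-\prod_{e\in M\cap M'}p_e\Bigr]$$
holds, with $p_e=\psi_{u_e}S_{u_e}/S_{v_e-1}$, $\mathcal{M}$ as in~\eqref{for:overlappingM}, and $D_t\geq 0$ the diagonal contribution from $M=M'$. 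Restricting the sum to those pairs whose union is an attainable copy of $\hat{H}^*$, and observing that each copy of $\hat{H}^*$ is produced by a bounded number of such pairs, gives
$$\Varp{N_t(H,\pi)}\geq c_1\,\E_{\sub{\psi}_t}[N_t(\hat{H}^*)]\cdot\Bigl(1-\max_{(M,M')}\prod_{e\in M\cap M'}p_e\Bigr).$$
Lemma~\ref{lem:urnpos} together with the Beta scaling $\psi_v=O_{\sss\pr}(v^{-1})$ (the parameters of $\psi_v$ are respectively of orders $1$ and $v$) implies $\max_e p_e=o_{\sss\pr}(1)$, so on a high-probability event $\mathcal{F}_t$ one obtains $\Varp{N_t(H,\pi)}\geq (c_1/2)\,\E_{\sub{\psi}_t}[N_t(\hat{H}^*)]$.

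Second, I would apply Paley--Zygmund to $X_t:=\E_{\sub{\psi}_t}[N_t(\hat{H}^*)]$, whose mean equals $\E[N_t(\hat{H}^*)]$. Jensen's inequality applied to the conditional expectation gives $\E[X_t^2]\leq\E[\E_{\sub{\psi}_t}[N_t(\hat{H}^*)^2]]=\E[N_t(\hat{H}^*)^2]$, so it suffices to show $\mathrm{Var}(N_t(\hat{H}^*))=O(\E[N_t(\hat{H}^*)]^2)$. Applying Lemma~\ref{lem:bounvariance:subgraph} to $\hat{H}^*$ and taking unconditional expectations,
$$\mathrm{Var}(N_t(\hat{H}^*))\leq\E[N_t(\hat{H}^*)]+\sum_{\tilde{H}}\E[N_t(\tilde{H})],$$
where $\tilde{H}$ ranges over edge-overlapping mergings of two copies of $\hat{H}^*$. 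The first term is absorbed because $\E[N_t(\hat{H}^*)]\to\infty$, while each $\tilde{H}$ is a fixed subgraph on at most $4|V(H)|$ vertices whose expected count is controlled by Theorem~\ref{th:subgraph:expectation}. Combining with the first step, on the intersection of $\mathcal{F}_t$ with $\{X_t\geq\theta\,\E[X_t]\}$ --- which has probability at least $1-\varepsilon$ for a suitable $\theta>0$ --- one obtains $\Varp{N_t(H,\pi)}/\E[N_t(H,\pi)]^2\geq c_1\theta c/2=:\eta$, as required.

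The main obstacle is the bound $\E[N_t(\tilde{H})]=O(\E[N_t(\hat{H}^*)]^2)$ for every edge-overlapping merging $\tilde{H}$. Heuristically this should hold because gluing two copies along one or more edges can only raise the in-degrees of the shared receiving vertices, which lowers the coefficients $\beta$ in~\eqref{eq:opteq}, so $B(\tilde{H})\leq 2B(\hat{H}^*)$. A rigorous justification requires either a case analysis over the possible ways two copies of $\hat{H}^*$ can be glued along edges, or an auxiliary disjoint-union extension of Theorem~\ref{th:subgraph:expectation} ensuring $\E[N_t(H_1\sqcup H_2)]=O(\E[N_t(H_1)]\E[N_t(H_2)])$ uniformly in the relative vertex indices.
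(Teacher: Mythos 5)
Your first step follows the paper's route (isolate the pairs $(M,M')$ whose union forms $\hat H$, and lower bound their contribution to $\Varp{N_t(H,\pi)}$ by a constant times $\E_{\sub{\psi}_t}[N_t(\hat H)]$), but there are three genuine problems. First, your ``exact identity'' for the conditional variance is not exact: it omits the pairs $(M,M')$ for which $M\cup M'$ is \emph{not attainable} (e.g.\ $M$ and $M'$ force the same labeled edge $(v,j)$ onto two different receivers). For such pairs $\pr_{\sub{\psi}_t}(M\cap M')=0$ while $\pr_{\sub{\psi}_t}(M)\pr_{\sub{\psi}_t}(M')>0$, so they contribute \emph{negatively}; since you want a lower bound on the variance, you must show these terms are negligible. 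The paper does exactly this, bounding them by $\E_{\sub{\psi}_t}[N_t(H,\pi)]^2/n^2$ because such pairs share at least two vertices. Second, the claim $\max_e p_e=o_{\sss\prob}(1)$ is false: $\psi_1\equiv 1$, so the edge from vertex $2$ to vertex $1$ has conditional probability $1$, and more generally for fixed $u\geq 2$ the variable $\psi_u$ is a nondegenerate Beta random variable that does not vanish. What is true and what the paper uses is that, after excluding vertex $1$ from the count (with negligible error), $\max_{i\geq 2}\psi_i<\omega<1$ with probability $\geq 1-\varepsilon$ (Lemma~\ref{lem:maxpsi}), which suffices to keep the factor $1-\Probp{M_s\subseteq E(\PA_t)}$ bounded away from $0$.

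The third and most serious issue is the Paley--Zygmund step. Paley--Zygmund applied to $X_t=\E_{\sub{\psi}_t}[N_t(\hat H)]$ yields $\Prob{X_t\geq\theta\,\Exp{X_t}}\geq(1-\theta)^2\Exp{X_t}^2/\Exp{X_t^2}$, which is a \emph{fixed positive constant} (at best close to $\Exp{X_t}^2/\Exp{X_t^2}\leq 1$), not a quantity that can be pushed to $1-\varepsilon$ for arbitrary $\varepsilon>0$. The proposition demands the conclusion with probability at least $1-\varepsilon$ for every $\varepsilon$, so this step cannot deliver the stated result; your proposal proves only that the normalized conditional variance exceeds $\eta$ with some probability bounded away from zero. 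The paper avoids this entirely: it never needs a lower tail bound for $X_t$, but instead works with the (upper) relation between conditional and unconditional expectations via Markov-type bounds, so no second moment of $N_t(\hat H)$ --- and hence none of the unproven estimates $\Exp{N_t(\tilde H)}=O(\Exp{N_t(\hat H)}^2)$ for mergings $\tilde H$ of two copies of $\hat H$ that you flag as your ``main obstacle'' --- is required. To repair your argument you would either have to replace Paley--Zygmund by a genuine concentration statement for $X_t$ around its mean, or restructure the second step along the lines of the paper.
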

To prove Proposition~\ref{prop:condconc} we need a preliminary result on the maximum intensity of the P\'olya urn graph:
\begin{Lemma}
	\label{lem:betagammacoupling}
	For every $\varepsilon>0$ there exists $K = K(\varepsilon)\in\N$ such that
	\eqn{\nonumber
		\pr\bigg(\bigcap_{k\geq K}\Big\{\psi_k\leq \frac{(\log k)^2}{(2m+\delta)k}\Big\}\bigg)\geq 1-\varepsilon.
	}
\end{Lemma}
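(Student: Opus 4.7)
The plan is to prove that $\sum_{k\geq 2}\pr(\psi_k > z_k) < \infty$, where $z_k := (\log k)^2/((2m+\delta)k)$. Once this summability is in hand, for every $\varepsilon>0$ one can pick $K=K(\varepsilon)$ so that $\sum_{k\geq K}\pr(\psi_k>z_k)<\varepsilon$, and a union bound on the complementary events yields the statement of the lemma. So the task reduces to a sufficiently sharp upper tail estimate on $\psi_k$.

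The key tool, as suggested by the label of the lemma, is the classical Gamma representation of the Beta distribution: write $\psi_k \stackrel{d}{=} X_k/(X_k+Y_k)$, with $X_k\sim\mathrm{Gamma}(m+\delta)$ and $Y_k\sim\mathrm{Gamma}(\beta_k)$ independent, where $\beta_k := m(2k-3)+(k-1)\delta = k(2m+\delta)-(3m+\delta)$. Rearranging, the event $\{\psi_k>z_k\}$ is equivalent to $\{(1-z_k)X_k > z_k Y_k\}$, and since $z_k\to 0$, for $k$ large this event is contained in $\{X_k > z_k Y_k/2\}$. Splitting on the typical size of $Y_k$,
\begin{equation}
\pr(\psi_k>z_k) \leq \pr(Y_k<\beta_k/2) + \pr(X_k > z_k\beta_k/4),
\end{equation}
I would then handle each term by a Chernoff bound. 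For the first, optimizing the Laplace transform $\E[e^{-sY_k}]=(1+s)^{-\beta_k}$ at $s=1$ gives $\pr(Y_k<\beta_k/2)\leq (\sqrt{e}/2)^{\beta_k}$, which is exponentially small in $k$. For the second, using $z_k\beta_k/4\geq (\log k)^2/8$ for $k$ large and applying Chernoff to $X_k$ at $s=1/2$ yields $\pr(X_k>(\log k)^2/8)\leq 2^{m+\delta}\exp(-(\log k)^2/16)$, which decays faster than any polynomial in $k$.

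Both pieces are summable, so $\sum_k\pr(\psi_k>z_k)<\infty$ and the lemma follows. The only item requiring care is the bookkeeping: one must verify that the lower-order corrections coming from $z_k\to 0$ and from $\beta_k/k\to 2m+\delta$ do not erode the super-polynomial decay $\exp(-(\log k)^2/16)$. Since that decay beats every polynomial by a wide margin, there is ample slack in the constants $1/2$ and $1/4$ chosen above, and no serious obstacle arises.
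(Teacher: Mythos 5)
Your proof is correct, but it is genuinely different from what the paper does: the paper does not prove Lemma~\ref{lem:betagammacoupling} at all, instead invoking the coupling of the sequence $(\psi_k)_{k\in\N}$ with a sequence of i.i.d.\ Gamma random variables from \cite[Lemma 3.2]{berger2014} and \cite[Lemma 4.10]{hofstad2018+}, of which the stated bound is a by-product. Your argument replaces that external machinery with a self-contained computation: the Beta--Gamma representation $\psi_k\stackrel{d}{=}X_k/(X_k+Y_k)$ with $X_k\sim\mathrm{Gamma}(m+\delta)$, $Y_k\sim\mathrm{Gamma}(\beta_k)$, the inclusion $\{\psi_k>z_k\}\subseteq\{Y_k<\beta_k/2\}\cup\{X_k>z_k\beta_k/4\}$ valid for $k$ large, and two Chernoff bounds giving respectively an exponentially small term $(\sqrt{\e}/2)^{\beta_k}$ and a super-polynomially small term $2^{m+\delta}\exp(-(\log k)^2/16)$; summability plus Borel--Cantelli/union bound then yields the claim, with the finitely many small $k$ where the inclusions fail absorbed into the choice of $K(\varepsilon)$. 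All the individual steps check out ($\beta_k=k(2m+\delta)-(3m+\delta)>0$ for $k\geq 2$, $z_k\beta_k\to(\log k)^2$, and the moment generating functions are exactly as you use them for non-integer shape parameters). What the citation route buys is the full two-sided coupling, which Berger et al.\ need elsewhere to control degrees from below as well; what your route buys is a short, elementary, and self-contained verification of exactly the one-sided tail bound the paper actually uses here. Either is acceptable; yours could be included as a remark for readers who do not want to unpack the coupling.
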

Lemma~\ref{lem:betagammacoupling} is a part of a more general coupling result between $(\psi_k)_{k\in\N}$ and  a sequence of i.i.d.\ Gamma random variables. We refer to \cite[Lemma 3.2]{berger2014} and  \cite[Lemma 4.10]{hofstad2018+} for more detail.
We now state the lemma we need to prove Proposition~\ref{prop:condconc}:
\begin{Lemma}[Maximum intensity]
	\label{lem:maxpsi}
	For every $\varepsilon>0$ there exists $\omega=\omega(\varepsilon)\in(0,1)$ such that, for every $t\in\N$,  
	\eqn{\nonumber
		\pr\Big(\max_{i\in 2,\ldots, t}\psi_i<\omega\Big)\geq 1-\varepsilon.
	}
\end{Lemma}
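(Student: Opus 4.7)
The plan is to split the maximum into a finite-indexed head and an infinite tail, and then combine a continuity argument for Beta variables with the coupling estimate of Lemma~\ref{lem:betagammacoupling}. The whole point is that the $\psi_i$'s with large index are already pushed away from $1$ by the diverging second shape parameter, while only finitely many indices pose any real danger.

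Fix $\varepsilon>0$. First I would apply Lemma~\ref{lem:betagammacoupling} with parameter $\varepsilon/2$ to obtain some $K=K(\varepsilon/2)\in\N$ such that, on an event $\mathcal{A}_\varepsilon$ with $\pr(\mathcal{A}_\varepsilon)\geq 1-\varepsilon/2$, one has $\psi_k\leq (\log k)^2/((2m+\delta)k)$ for all $k\geq K$. After enlarging $K$ if necessary so that $(\log K)^2/((2m+\delta)K)\leq 1/2$, this gives $\max_{k\geq K}\psi_k\leq 1/2$ on $\mathcal{A}_\varepsilon$, uniformly in $t$.

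For the head, recall that for every $i\geq 2$ the variable $\psi_i$ is Beta-distributed with strictly positive shape parameters $m+\delta$ and $m(2i-3)+(i-1)\delta$, hence has a continuous CDF with $\pr(\psi_i<1)=1$. Therefore for each $i\in\{2,\ldots,K-1\}$ one can choose $\omega_i\in(0,1)$ with $\pr(\psi_i\geq \omega_i)\leq \varepsilon/(2(K-2))$. Setting
$$\omega:=\max\{1/2,\,\omega_2,\ldots,\omega_{K-1}\}\in(0,1),$$
a union bound yields $\pr(\max_{2\leq i<K}\psi_i\geq \omega)\leq \varepsilon/2$.

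Combining the two estimates with one last union bound gives
$$\pr\Big(\max_{2\leq i\leq t}\psi_i\geq \omega\Big)\leq \pr\Big(\max_{2\leq i<K}\psi_i\geq \omega\Big)+\pr\Big(\max_{K\leq i\leq t}\psi_i\geq \omega\Big)\leq \varepsilon/2+\varepsilon/2=\varepsilon,$$
uniformly in $t$, as desired. The only slightly delicate point is coordinating the choice of $K$: it must be large enough that the coupling bound from Lemma~\ref{lem:betagammacoupling} forces the tail below $\omega$, yet $\omega$ must simultaneously dominate the (finitely many) head thresholds $\omega_2,\ldots,\omega_{K-1}$. This is not really an obstacle because $K$ depends only on $\varepsilon$ and the head is a finite collection of fixed-law Beta variables, so both demands can be met at once.
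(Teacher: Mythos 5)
Your proof is correct and follows essentially the same route as the paper: split the maximum at the threshold $K$ from Lemma~\ref{lem:betagammacoupling}, control the tail by the coupling bound, and control the finite head using that each $\psi_i$ is a Beta variable strictly below $1$ almost surely. The only cosmetic differences are that you use a union bound with individual head quantiles where the paper uses independence and the quantile function of the head maximum, and your explicit enlargement of $K$ to force the tail bound below $1/2$ is a slightly cleaner way of guaranteeing $\omega\in(0,1)$.
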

\begin{proof}
	Fix $\varepsilon>0$, and consider $K(\varepsilon/2)$ as given by Lemma~\ref{lem:betagammacoupling}. For every $\omega\in(0,1)$ we can write
	\eqn{\label{for:maxpsi:0}
		\pr\Big(\max_{i\in2,\ldots,t}\psi_i<\omega\Big) = \pr\Big(\max_{i\in2,\ldots,K}\psi_i<\omega\Big) \pr\Big(\max_{i\in[t]\setminus[K]}\psi_i<\omega\Big),
	}
	where we used the independence of $\psi_2,\ldots,\psi_t$. If $t>K$ the second term in the right-hand side of  \eqref{for:maxpsi:0} is well defined, otherwise we only have the first term.
	Define,
	\eqn{\nonumber
		\omega_1 = \begin{cases} \frac{(\log K)^2}{(2m+\delta)K} & \mbox{if}\ t>K,\\
			0 & \mbox{if}\ t\leq K.\end{cases}
	}
	Notice that, since the function $k\mapsto \frac{(\log k)^2}{(2m+\delta)k}$ is decreasing, it follows that
	\eqn{
		\label{for:maxpsi:1}
		\pr\Big(\max_{i\in[t]\setminus[K]}\psi_i<\omega_1\Big)\geq 1-\varepsilon/2.
	}
	Define the random variable $X_K = \max_{i\in 2,\ldots,K}\psi_i$, denote its distribution function by $F_K$ and the inverse of its distribution function by $F^{-1}_K$. Consider $\omega_2 = F^{-1}_{K}(1-\varepsilon/2)$, that implies
	\eqn{
		\label{for:maxpsi:2}
		\pr\Big(\max_{i\in[K]}\psi_i<\omega_2\Big)= 1-\varepsilon/2.
	}
	Consider then $\omega = \max\{\omega_1,\omega_2\}$. Using a\eqref{for:maxpsi:1} and \eqref{for:maxpsi:2} with $\omega$ in \eqref{for:maxpsi:0}, it follows that
	\eqn{\nonumber
		\pr\Big(\max_{i\in 2,\ldots, K}\psi_i<\omega\Big) \pr\Big(\max_{i\in[t]\setminus[K]}\psi_i<\omega\Big)\geq 
		(1-\varepsilon/2)^2\geq 1-\varepsilon,
	}
	which completes the proof.
\end{proof}

\begin{proof}[Proof of Proposition~\ref{prop:condconc}]
We use the expression of the conditional variance of~\eqref{for:subg:variance:2}. We first study the term in the conditional variance corresponding to $\hat{H}$. Let $\tilde{\mathcal{M}}$ denote the set of labeled edges $M,M'$ that together form subgraph $\hat{H}$. Let the edges that $M$ and $M'$ share be denoted by $M_s$. Furthermore, let $\tilde{\mathcal{M}}_1$ denote the set of labeled edges $M,M'$ that together form subgraph $\hat{H}$ that do not use vertex 1. We can then write this term as
\begin{equation}
\begin{aligned}[b]
	&\sum_{M,M'\in \tilde{\mathcal{M}}}\Probp{M\cup M'\subseteq E(PA_t)}(1-\Probp{M_s\subseteq E(PA_t)})\\
	& \geq \sum_{M,M'\in \tilde{\mathcal{M}}_1}\Probp{M\cup M'\subseteq E(PA_t)}(1-\psi_{\max})\\
	& = (1-\psi_{\max})\Expd{N_t(\hat{H})}
\end{aligned}
\end{equation}
where the inequality uses~\eqref{for:edgeUrnP}, and $\psi_{\max}=\max_{i\in 2,\ldots, t}\psi_i$. Note that here we excluded vertex 1 from the number of subgraphs with negligible error. By Lemma~\ref{lem:maxpsi} there exists $\omega$ such that with probability at least $1-\varepsilon$, $\psi_{\max}<\omega<1$.

By the assumption on $\hat{H}$, $\Exp{N_t(\hat{H})}\geq \tilde{C}\Exp{N_t(H,\pi)}^2$ for some $\tilde{C}>0$. We the use that $\Expd{N_t(\hat{H})}=O_\pr(\Exp{N_t(\hat{H})})$. Thus, for $t$ sufficiently large,  we can bound the contribution from subgraph $\hat{H}$ to the conditional variance from below with probability at least $1-\varepsilon$ by
	\begin{equation}
	\sum_{M,M'\in \tilde{\mathcal{M}}}\Probp{M\cup M'\subseteq E(PA_t)}(1-\Probp{M_s\subseteq E(PA_t)})\geq C \Expd{N_t(H,\pi)}^2,
	\end{equation}
	for some $C>0$.
	
	Note that the only terms that have a negative contribution to~\eqref{for:subg:variance:2} are the terms where $M\cup M'$ is a non-attainable subgraph. In that situation, $\pr_{\sub{\psi}_t}\Big(M\subseteq E(\PA_t),\ M'\subseteq E(\PA_t)\Big) =0$. Furthermore, the sum over $\pr_{\sub{\psi}_t}(M\subseteq E(\PA_t))\pr_{\sub{\psi}_t}(M'\subseteq E(\PA_t) \leq  \Expd{N_t(H,\pi)}^2/n^2$, since the two subgraphs share at least two vertices. Therefore, the negative terms in the conditional variance scale as most as $\Expd{N_t(H,\pi)}/n^2$. We therefore obtain that with probability at least $1-\varepsilon$,
	\begin{equation}
	\Varp{N_t(H,\pi)}\geq \eta \Expd{N_t(H,\pi)}^2,
	\end{equation}
	for some $\eta>0$, which proves the proposition.
\end{proof}

\section{Discussion}\label{sec:discussion}
In this paper, we investigated the expected number of times a graph $H$ appears as a subgraph of a PAM for any degree exponent $\tau$. We find the scaling of the expected number of such subgraphs in terms of the graph size $t$ and the degree exponent $\tau$ by defining an optimization problem that finds the optimal structure of the subgraph in terms of the ages of the vertices that form subgraph $H$ and by using the interpretation of the PAM as a P\'olya urn graph.  

We derive the asymptotic scaling of the number of subgraphs. For the triangle subgraph, we obtain more precise asymptotics. It would be interesting to obtain precise asymptotics of the expected number of other types of subgraphs as well. In particular, this is necessary to compute the variance of the number of subgraphs, which may allow us to derive laws of large numbers for the number of subgraphs. We show that different subgraphs may have significantly different concentration properties. Therefore, identifying the distribution of the number of rescaled subgraphs for any type of subgraph remains a challenging open problem.

Another interesting extension would be to investigate whether our result still holds for other types of PAMs, for example models that allow for self-loops, or models that include extra triangles.

We further prove results for the number of subgraphs of fixed size $k$, while the graph size tends to infinity. It would also be interesting to let the subgraph size grow with the graph size, for example by counting the number of cycles of a certain length that grows in the graph size.

Finally, we investigate the number of times $H$ appears as a subgraph of a PAM. It is also possible to count the number of times $H$ appears as an \emph{induced} subgraph instead, forbidding edges that are not present in $H$ to be present in the larger graph. It would be interesting to see whether the optimal subgraph structure is different from the optimal induced subgraph structure. 

\paragraph{Acknowledgments.}
We thank Remco van der Hofstad for reading the manuscript.
This work is supported in part by the Netherlands Organisation for Scientific Research (NWO) through the Gravitation {\sc Networks} grant 024.002.003 and NWO TOP grant 613.001.451.

\printbibliography[title=References, heading = bibintoc]
\end{document}